\documentclass[a4j,12pt]{article}
\usepackage{amsmath,amsthm,amssymb,amscd,ascmac, amsfonts}
\usepackage{mathrsfs}
\usepackage{stmaryrd}
\usepackage{braket}
\usepackage{accents}
\usepackage{url}
\allowdisplaybreaks[4]

\numberwithin{equation}{section}
%\numberwithin{table}{section}
%\numberwithin{figure}{section}
\usepackage[dvips]{graphicx,color,psfrag}
%\usepackage[copylabelkey]{refcheckl}
%\usepackage{lscape}
%\usepackage[dvipdfm,
%pdftitle={\UTF{00C9}^\UTF{00C9}C\UTF{00C9}g\UTF{00C9}\UTF{00E3}},%
%pdfauthor={\UTF{00ED}\UTF{00F2}\UTF{00E9}Ò},%
%pdfsubject={\UTF{00C9}T\UTF{00C9}u\UTF{00C9}^\UTF{00C9}C\UTF{00C9}g\UTF{00C9}\UTF{00E3}},%
%pdfkeywords={\UTF{00C9}L[\UTF{00C9}\UTF{00E8}[\UTF{00C9}h},
%pdfdisplaydoctitle=true,
%colorlinks=true,
%bookmarks=true,
%bookmarksnumbered=false,
%linkcolor=blue,
%anchorcolor=blue,
%urlcolor=blue,
%bookmarkstype=toc]{hyperref}

%
\makeatletter

% complex analysis

%%%  Operators  %%%%%%%%%%%%%%%%%%%%%%%%%%%%%%%%%%%%%%%%%%%%%%%%%%

% linear algebra 

\renewcommand{\det}{\mathop{\mathrm{det}}\,}

%===============================================================
%    <2> style
%===============================================================

%===============================================================
%    <3> environments
%===============================================================

%\theoremstyle{theorem}
\newtheorem*{multitheorem}{\variable@name}

\theoremstyle{definition}
\newcommand{\variable@name}{Theorem}
\newtheorem*{multiproclaim}{\variable@name}

\theoremstyle{plain}
\newtheorem{thm}{Theorem}
\newtheorem{prop}[thm]{Proposition}
\newtheorem{lem}[thm]{Lemma}
\newtheorem{cor}[thm]{Corollary}

\theoremstyle{definition}

\newtheorem{rem}[thm]{Remark}
\newtheorem{exa}[thm]{Example}

%====================  page style  ==========================================

\textwidth=16.5 cm
\textheight=22 cm
\oddsidemargin= -2.5mm
\evensidemargin=-2.5mm

\topmargin=0.0 cm

\begin{document}
\title{New identities for some symmetric polynomials, and a higher order analogue of the Fibonacci and Lucas numbers}
\author{Genki Shibukawa}
\date{
\small MSC classes\,:\,05E05, 11B39, 33C05}
\pagestyle{plain}

\maketitle

\begin{abstract}
We give new identities for some symmetric polynomials. 
% : elementary, complete homogeneous and power symmetric polynomials, which are relationships between symmetric polynomials in $2r$ variables $z _{1}$, $z _{1}^{-1}$, $\ldots$, $z _{r}$, $z _{r}^{-1}$ and $r$ variables $z_{1}+z_{1}^{-1}$, $\ldots$, $z_{r}+z_{r}^{-1}$. 
% As applications of these identities, we obtain explicit expressions for some higher order analogue of Fibonacci and Lucas numbers. 
As applications of these identities, we obtain some formulas for a higher order analogue of Fibonacci and Lucas numbers.  
\end{abstract}

\section{Introduction}
Throughout the paper, we denote the set of non-negative integers by $\mathbb{Z}_{\geq 0}$, 
the field of real numbers by $\mathbb{R}$, 
the field of complex numbers by $\mathbb{C}$ and $\mathbb{C}^{\times }:=\mathbb{C}\setminus \{0\}$. 
Let $z_{1},\ldots ,z_{r}$ be $r$ independent variables and $\mathbf{z}:=(z_{1},\ldots,z_{r})$. 
For each non-negative integer $n$, the $n$th elementary symmetric polynomial $e_{n}^{(r)}$, 
complete homogeneous polynomials $h_{n}^{(r)}$ and power symmetric polynomial $p_{n}^{(r)}$ are defined by 
\begin{align}
\label{eq:elementary sum}
e_{n}^{(r)}
   &=
   e_{n}^{(r)}(\mathbf{z})
   :=\begin{cases}
   \sum_{1\leq j_{1}<\cdots <j_{n}\leq r}
   z_{j_{1}}\cdots z_{j_{n}} & (1\leq n\leq r) \\
   1 & (n=0) \\
   0 & (n>r)
   \end{cases}, \\
\label{eq:complete homogeneous}
h_{n}^{(r)}
   &=
   h_{n}^{(r)}(\mathbf{z})
   :=
   \sum_{m_{1}+ \cdots+ m_{r}=n}
   z_{1}^{m_{1}}\cdots z_{r}^{m_{r}}, \\
\label{eq:power sum}
p_{n}^{(r)}
   &=
   p_{n}^{(r)}(\mathbf{z})
   :=
   \sum_{j=1}^{r}z_{j}^{n}
\end{align}
respectively. 
For $\mathbf{z} =(z _{1},\ldots, z _{r}) \in {\mathbb{C}^{\times }}^{r}$, we put
\begin{align}
(\mathbf{z}+\mathbf{z}^{-1})
   &:=
   (z_{1}+z_{1}^{-1},\ldots ,z_{r}+z_{r}^{-1})\in {\mathbb{C}}^{r}, \nonumber \\
(\mathbf{z},\mathbf{z}^{-1})
   &:=
   (z_{1},\ldots ,z_{r},z_{1}^{-1},\ldots ,z_{r}^{-1})\in {\mathbb{C}}^{2r}. \nonumber
\end{align}
% From fundamental theorem of symmetric polynomials, 
Our main results are new identities for these three types of symmetric polynomials $f=e,h,p$, which are relationships between $f_{n}^{(r)}(\mathbf{z}+\mathbf{z}^{-1})$ and $f_{n}^{(2r)}(\mathbf{z},\mathbf{z}^{-1})$. 
More precisely, we determine the following expansion coefficients $a_{n,k}^{(f)}$ and $b_{n,k}^{(f)}$, 
% which we call the first and second kind coefficients, 
\begin{align}
\label{eq:first kind}
f_{n}^{(r)}(\mathbf{z}+\mathbf{z}^{-1})
   &=
   \sum_{k=0}^{n}a_{n,k}^{(f)}f_{k}^{(2r)}(\mathbf{z},\mathbf{z}^{-1}), \\
\label{eq:second kind}
f_{n}^{(2r)}(\mathbf{z},\mathbf{z}^{-1})
   &=
   \sum_{k=0}^{n}b_{n,k}^{(f)}f_{k}^{(r)}(\mathbf{z}+\mathbf{z}^{-1}).  
\end{align}
% In this article, we call $a_{n,k}^{(f)}$ the fist kind coefficients 
In this article we call (\ref{eq:first kind}) (resp. (\ref{eq:second kind})) the first kind formulas (resp. the second kind formulas). 
\begin{thm}[The first kind formulas]
% \label{thm:main result2}
\label{thm:the first kind}
For any non-negative integer $m$, we have the following identities. \\
{\rm{(1)}} 
\begin{align}
\label{eq:main elemental}
   \sum_{k=\max\left(\left\lfloor \frac{m}{2}\right\rfloor -r,0\right)}^{\left\lfloor \frac{m}{2}\right\rfloor}
   c_{m-r -1,k}
   e_{m-2k}^{(2r)}(\mathbf{z},\mathbf{z}^{-1})
   =
   \begin{cases}
   e_{m}^{(r)}(\mathbf{z}+\mathbf{z}^{-1}) & (m=0,1,\ldots, r) \\
   0 & (\rm{others})
   \end{cases},
\end{align}
where 
$$
c_{n,k}:=\binom{n}{k}-\binom{n}{k-1}, \quad 
\binom{n}{k}:=\begin{cases}
   \frac{n(n-1)\cdots (n-k+1)}{k!} & (k\not=0) \\
   1 & (k=0) 
   \end{cases}.
$$
and $\lfloor x \rfloor$ is the greatest integer not exceeding $x \in \mathbb{R}$.\\
{\rm{(2)}}
\begin{align}
\label{eq:main complete}
h_{m}^{(r)}(\mathbf{z}+\mathbf{z}^{-1})
   =
   \sum_{k= 0}^{\left\lfloor \frac{m}{2}\right\rfloor}
   c_{m+r-1,k}
   h_{m-2k}^{(2r)}(\mathbf{z},\mathbf{z}^{-1}).%, \\
% \label{eq:main power}
% \frac{1}{2}
%    \sum_{k= 0}^{n}
%    \binom{n}{k}
%    P_{k}^{(r)}(q)
%    &=
%    L_{n}^{(r)}(q).
%    \sum_{k\geq 0}^{\left\lfloor \frac{n}{2}\right\rfloor}
%       (-1)^{k}
%       \binom{n-k+r-1}{k}
%       F_{n-2k+1}^{(r)}(q)
%       &=
%       S_{n}^{(r)}(q)
\end{align}
{\rm{(3)}} 
\begin{align}
\label{eq:main power}
p_{m}^{(r)}(\mathbf{z}+\mathbf{z}^{-1})
   =
   \frac{1}{2}
   \sum_{k=0}^{m}
   \binom{m}{k}
   p_{|m-2k|}^{(2r)}(\mathbf{z},\mathbf{z}^{-1}).
\end{align}
\end{thm}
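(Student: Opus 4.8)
All three identities are polynomial identities in $z_1^{\pm1},\dots,z_r^{\pm1}$, so it suffices to prove them as identities of formal power series, and I would establish them by the generating–function method. Write $u_j=z_j+z_j^{-1}$, so that $\mathbf z+\mathbf z^{-1}=(u_1,\dots,u_r)$. The two algebraic observations on which everything rests are the factorizations
\[
1\pm u_js+s^2=(1+s^2)\Bigl(1\pm u_j\tfrac{s}{1+s^2}\Bigr),\qquad (1\pm z_js)(1\pm z_j^{-1}s)=1\pm u_js+s^2,
\]
and the fact that the substitution $s=t\,c(t^2)$, where $c(x)=\tfrac{1-\sqrt{1-4x}}{2x}$ is the Catalan generating function (characterized by $c=1+xc^2$, equivalently $c^{-1}=1-xc$), satisfies $1+s^2=c(t^2)$ and $\tfrac{s}{1+s^2}=t$. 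About $c$ I will only need the generalized binomial expansion $c(x)^{\rho}=\sum_{k\ge0}\frac{\rho}{2k+\rho}\binom{2k+\rho}{k}x^k$, valid for every integer $\rho$ (classical for $\rho\ge1$; for $\rho\le0$ it follows from $c^{-1}=1-xc$), together with the elementary identity $\binom nk-\binom n{k-1}=\frac{n+1-2k}{n+1}\binom{n+1}{k}$, which says precisely that $\frac{\rho}{2k+\rho}\binom{2k+\rho}{k}=c_{\rho+2k-1,k}$.

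For part (2): starting from $\prod_j(1-u_js+s^2)^{-1}=(1+s^2)^{-r}\prod_j(1-u_j\tfrac{s}{1+s^2})^{-1}$ and the generating function $\sum_nh_n^{(N)}(\mathbf w)t^n=\prod_j(1-w_jt)^{-1}$, the substitution $s=t\,c(t^2)$ yields
\[
\sum_{m\ge0}h_m^{(r)}(\mathbf z+\mathbf z^{-1})\,t^m=\sum_{n\ge0}h_n^{(2r)}(\mathbf z,\mathbf z^{-1})\,t^n\,c(t^2)^{n+r}.
\]
Expanding $c(t^2)^{n+r}=\sum_kc_{n+r+2k-1,k}t^{2k}$ and extracting $[t^m]$ (so $n=m-2k$, whence $0\le k\le\lfloor m/2\rfloor$ and $n+r+2k-1=m+r-1$) gives \eqref{eq:main complete}.

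For part (1): the same manipulation with $\sum_ne_n^{(N)}(\mathbf w)t^n=\prod_j(1+w_jt)$ and $(1+z_js)(1+z_j^{-1}s)=1+u_js+s^2$ produces
\[
\sum_{m=0}^{r}e_m^{(r)}(\mathbf z+\mathbf z^{-1})\,t^m=\sum_{n=0}^{2r}e_n^{(2r)}(\mathbf z,\mathbf z^{-1})\,t^n\,c(t^2)^{\,n-r},
\]
where now the exponent $n-r$ can be negative, so $c(t^2)^{n-r}=\sum_kc_{n-r+2k-1,k}t^{2k}$ is in general an infinite series. Extracting $[t^m]$ gives $\sum_kc_{m-r-1,k}e_{m-2k}^{(2r)}(\mathbf z,\mathbf z^{-1})$ with $k$ running over the range forcing $0\le m-2k\le 2r$, i.e. $\max(\lfloor m/2\rfloor-r,0)\le k\le\lfloor m/2\rfloor$ as in the statement (any extra boundary term is harmless since $e_{2r+1}^{(2r)}=0$); because the left side is a polynomial of degree $r$ in $t$, this combination equals $e_m^{(r)}(\mathbf z+\mathbf z^{-1})$ when $m\le r$ and vanishes otherwise. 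Part (3) is independent and elementary: expand $(z_j+z_j^{-1})^m=\sum_{k=0}^{m}\binom mk z_j^{\,m-2k}$, apply the involution $k\mapsto m-k$ (which fixes $\binom mk$ and negates $m-2k$) to symmetrize into $\tfrac12\sum_k\binom mk\bigl(z_j^{\,m-2k}+z_j^{-(m-2k)}\bigr)=\tfrac12\sum_k\binom mk\bigl(z_j^{|m-2k|}+z_j^{-|m-2k|}\bigr)$, and sum over $j=1,\dots,r$ to recognize the inner bracket as $p_{|m-2k|}^{(2r)}(\mathbf z,\mathbf z^{-1})$.

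The conceptual content is entirely in the first paragraph; the rest is bookkeeping. The one place I expect to have to be genuinely careful is part (1): one must justify the formal substitution $s=t\,c(t^2)$ and, above all, handle correctly the infinite series $c(t^2)^{n-r}$ for $n<r$, so that the cancellations forced by the left-hand side being a polynomial really do deliver the stated "$0$ otherwise". Checking that the summation ranges produced by coefficient extraction match exactly those written in \eqref{eq:main elemental} and \eqref{eq:main complete} is the other routine-but-fiddly point, and the identity $\frac{\rho}{2k+\rho}\binom{2k+\rho}{k}=c_{\rho+2k-1,k}$ is the small lemma that makes the coefficients appear in the advertised form.
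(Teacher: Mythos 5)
Your proposal is correct and follows essentially the same route as the paper: the factorization $(1\pm z_jy)(1\pm z_j^{-1}y)=(1+y^2)\bigl(1\pm(z_j+z_j^{-1})\tfrac{y}{1+y^2}\bigr)$, the substitution $x=y/(1+y^2)$ inverted by the Catalan-type series, the expansion $y^{N}=x^{N}\sum_k c_{N+2k-1,k}x^{2k}$ with coefficient extraction for parts (1) and (2), and the direct binomial expansion for part (3). The only difference is cosmetic: you derive the key expansion of $c(x)^{\rho}$ from the functional equation $c=1+xc^{2}$ and the classical formula for powers of the Catalan series, whereas the paper obtains the same identity via the hypergeometric function $\psi(\alpha;x)={}_2F_1\bigl(\tfrac{\alpha}{2},\tfrac{\alpha+1}{2};\alpha+1;4x\bigr)$ and a quadratic transformation.
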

\begin{thm}[The second kind formulas]
\label{thm:the second kind}
{\rm{(1)}} For $n=0,1,\ldots, 2r$, we have 
\begin{align}
\label{eq:main elemental2}
e_{n}^{(2r)}(\mathbf{z},\mathbf{z}^{-1})
   &=
   \sum_{k=\max\left\{\left\lfloor \frac{n-r}{2}\right\rfloor,0\right\}}^{\left\lfloor \frac{n}{2}\right\rfloor}
%    \sum_{l=\left\lfloor \frac{n-r}{2}\right\rfloor}^{\left\lfloor \frac{n}{2}\right\rfloor}
   \binom{r-n+2k}{k}
   e_{n-2k}^{(r)}(\mathbf{z}+\mathbf{z}^{-1}).
\end{align}
{\rm{(2)}} For any non-negative integer $n$, 
\begin{align}
\label{eq:main complete2}
h_{n}^{(2r)}(\mathbf{z},\mathbf{z}^{-1})
   &=
   \sum_{k=0}^{\left\lfloor \frac{n}{2}\right\rfloor}
   \binom{n-k+r-1}{k}
   h_{n-2k}^{(r)}(\mathbf{z}+\mathbf{z}^{-1}).
\end{align}
{\rm{(3)}} For any positive integer $n$, 
\begin{align}
\label{eq:main power2}
p_{n}^{(2r)}(\mathbf{z},\mathbf{z}^{-1})
   &=
   2\sum_{k= 0}^{\lfloor \frac{n+1}{2}\rfloor}\binom{2k-n-1}{k}
   p_{n-2k}^{(r)}(\mathbf{z}+\mathbf{z}^{-1})
   -\sum_{k= 0}^{\lfloor \frac{n}{2}\rfloor}\binom{2k-n}{k}p_{n-2k}^{(r)}(\mathbf{z}+\mathbf{z}^{-1}). 
\end{align}
\end{thm}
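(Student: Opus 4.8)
The plan is to prove all three parts of Theorem~\ref{thm:the second kind} uniformly by the generating-function method, organised around one elementary factorisation. Put $y_{j}:=z_{j}+z_{j}^{-1}$ and $\mathbf{y}:=(y_{1},\dots ,y_{r})$, and recall the classical series $\sum_{n\ge 0}e_{n}^{(s)}(\mathbf{w})t^{n}=\prod_{i=1}^{s}(1+w_{i}t)$, $\sum_{n\ge 0}h_{n}^{(s)}(\mathbf{w})t^{n}=\prod_{i=1}^{s}(1-w_{i}t)^{-1}$ and $\sum_{n\ge 1}p_{n}^{(s)}(\mathbf{w})t^{n}=\sum_{i=1}^{s}w_{i}t(1-w_{i}t)^{-1}$. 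The whole argument rests on the identity
\begin{align*}
(1\pm z_{j}t)(1\pm z_{j}^{-1}t)=1\pm y_{j}t+t^{2}=(1+t^{2})\Bigl(1\pm\frac{t}{1+t^{2}}\,y_{j}\Bigr),
\end{align*}
which, for $s=t/(1+t^{2})$, relates the $r$-variable generating series in $\mathbf{y}$ and the $2r$-variable one in $(\mathbf{z},\mathbf{z}^{-1})$ by the scalar factor $(1+t^{2})^{\pm r}$.

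For part (1), taking the product over $j=1,\dots ,r$ with the $+$ sign gives the polynomial identity $\sum_{n}e_{n}^{(2r)}(\mathbf{z},\mathbf{z}^{-1})t^{n}=\sum_{k=0}^{r}e_{k}^{(r)}(\mathbf{y})\,t^{k}(1+t^{2})^{r-k}$. Reading off the coefficient of $t^{n}$, the term $e_{m}^{(r)}(\mathbf{y})$ contributes only when $m\equiv n\pmod 2$; writing $m=n-2k$ and using $[t^{2k}](1+t^{2})^{r-n+2k}=\binom{r-n+2k}{k}$ gives $e_{n}^{(2r)}(\mathbf{z},\mathbf{z}^{-1})=\sum_{k}\binom{r-n+2k}{k}e_{n-2k}^{(r)}(\mathbf{y})$, which is \eqref{eq:main elemental2}. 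Part (2) is the same computation with $(1-w_{i}t)^{-1}$ replacing $(1+w_{i}t)$: it gives $\sum_{n}h_{n}^{(2r)}(\mathbf{z},\mathbf{z}^{-1})t^{n}=\sum_{k\ge 0}h_{k}^{(r)}(\mathbf{y})\,t^{k}(1+t^{2})^{-r-k}$, so the coefficient of $h_{n-2k}^{(r)}(\mathbf{y})$ is $[t^{2k}](1+t^{2})^{2k-n-r}=\binom{2k-n-r}{k}=(-1)^{k}\binom{n+r-k-1}{k}$, which is \eqref{eq:main complete2}.

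Part (3) needs slightly more care, the power-sum series being rational rather than a product. Summing over $j$ and clearing denominators,
\begin{align*}
\sum_{n\ge 1}p_{n}^{(2r)}(\mathbf{z},\mathbf{z}^{-1})t^{n}=\sum_{j=1}^{r}\frac{y_{j}t-2t^{2}}{1-y_{j}t+t^{2}},
\end{align*}
and expanding $(1-y_{j}t+t^{2})^{-1}=(1+t^{2})^{-1}\sum_{k\ge 0}y_{j}^{k}\bigl(t/(1+t^{2})\bigr)^{k}$ and summing over $j$ splits the left side into
\begin{align*}
\sum_{n\ge 1}p_{n}^{(2r)}(\mathbf{z},\mathbf{z}^{-1})t^{n}=\sum_{k\ge 1}p_{k}^{(r)}(\mathbf{y})\frac{t^{k}}{(1+t^{2})^{k}}-2\sum_{k\ge 0}p_{k}^{(r)}(\mathbf{y})\frac{t^{k+2}}{(1+t^{2})^{k+1}} .
\end{align*}
Extracting $[t^{n}]$ from each sum, the coefficient of $p_{n-2k}^{(r)}(\mathbf{y})$ comes out as $\binom{2k-n}{k}-2\binom{2k-n-1}{k-1}$, and a single use of Pascal's rule $\binom{2k-n-1}{k}+\binom{2k-n-1}{k-1}=\binom{2k-n}{k}$ rewrites it as $2\binom{2k-n-1}{k}-\binom{2k-n}{k}$, which is \eqref{eq:main power2}.

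The main obstacle is bookkeeping rather than anything conceptual: one must verify that the surviving coefficients are supported exactly on the summation ranges displayed in parts (1)--(3) — the extra terms allowed there vanish identically, one of the two factors being zero — and, in part (3), keep track of the $p_{0}^{(r)}=r$ contribution, which appears precisely when $n$ is even and then comes entirely from the second series. (An alternative is to prove Theorem~\ref{thm:the first kind} first by the same factorisation and then invert its lower-unitriangular coefficient matrix, but that only repackages the same binomial identities, so the direct extraction above is the cleaner route.)
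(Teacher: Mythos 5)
Your proposal is correct in substance and follows essentially the same route as the paper: the factorization $(1\pm z_jt)(1\pm z_j^{-1}t)=(1+t^2)\bigl(1\pm y_j\,t/(1+t^2)\bigr)$ (the paper's Lemma \ref{thm:gen fnc of CF}), binomial expansion of $(1+t^2)^{\pm(\cdot)}$, and coefficient extraction; your part (3) works directly for general $r$ with $\sum_j(y_jt-2t^2)/(1-y_jt+t^2)$ where the paper first reduces to $r=1$, but the manipulation is the same. One point deserves attention: in part (2) your coefficient $[t^{2k}](1+t^2)^{2k-n-r}=\binom{2k-n-r}{k}=(-1)^k\binom{n+r-k-1}{k}$ is the \emph{correct} one, but it is not literally the printed coefficient $\binom{n-k+r-1}{k}$ of \eqref{eq:main complete2} --- they differ by the factor $(-1)^k$. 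A check at $r=1$, $n=2$ ($h_2^{(2)}(z,z^{-1})=z^2+1+z^{-2}$ versus $(z+z^{-1})^2+1$) shows the signed version is the true identity; the paper's own proof silently drops the $(-1)^k$ when expanding $(1+y^2)^{-m-r}=\sum_k\frac{(m+r)_k}{k!}(-1)^ky^{2k}$, and its Corollary \ref{thm:new Fibonacci formulas} reinstates the $(-1)^k$ for $r=1,2$. So rather than asserting your coefficient ``is'' \eqref{eq:main complete2}, you should state explicitly that the identity holds with $\binom{2k-n-r}{k}$ (equivalently $(-1)^k\binom{n-k+r-1}{k}$) and that the unsigned form as printed is a typo.
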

The proofs of (\ref{eq:main elemental}) and (\ref{eq:main complete}) are more difficult than (\ref{eq:main elemental2}) and (\ref{eq:main complete2}). 
In particular we need a hypergeometric identity (\ref{eq:key formula}) to derive the explicit formulas of $a_{n,k}^{(e)}$ or $a_{n,k}^{(h)}$. 
The second kind formulas and (\ref{eq:main power}) which are proved immediately by the binomial formula and generating functions may be well known. 
However we have not found appropriate references about these formulas and their interesting applications which are to propose some new formulas of Fibonacci and Lucas numbers etc. 
% Therefore we give their proofs and applications together in this article. 
In particular, we apply Theorem \ref{thm:the first kind} and Theorem \ref{thm:the second kind} to some special values of $h_{n}^{(r)}$ and $p_{n}^{(r)}$;
\begin{align}
F_{n+1}^{(r)}
   &:=
   h_{n}^{(r)}\left(-\boldsymbol{\zeta}^{+\iota }-\boldsymbol{\zeta}^{-\iota }\right)
   =
   h_{n}^{(r)}\left(-2\cos{\left(\frac{2\pi }{2r+1}\right)},\ldots, -2\cos{\left(\frac{2\pi r}{2r+1}\right)}\right), \\
L_{n}^{(r)}
   &:=
   p_{n}^{(r)}\left(-\boldsymbol{\zeta}^{+\iota }-\boldsymbol{\zeta}^{-\iota }\right)   
   =
   p_{n}^{(r)}\left(-2\cos{\left(\frac{2\pi }{2r+1}\right)},\ldots, -2\cos{\left(\frac{2\pi r}{2r+1}\right)}\right),   
\end{align}
where  
% $q=e^{\frac{2\pi \sqrt{-1}}{2r+1}}$ and  
$$
\boldsymbol{\zeta}^{\pm \iota }
   :=
   \left(e^{\pm \frac{2\pi \sqrt{-1}r}{2r+1}},\ldots , e^{\pm \frac{2\pi \sqrt{-1}}{2r+1}}\right).
$$
Since the case of $r=1$ is trivial sequences
\begin{align*}
F_{n+1}^{(1)}=L_{n}^{(1)}=1, \quad (n\geq 0)
\end{align*}
and the case of $r=2$ is the classical Fibonacci numbers $\{F_{n+1}\}_{n}$ and Lucas numbers $\{L_{n}\}_{n}$
\begin{align*}
F_{n+1}^{(2)}
   &=
   \frac{1}{\sqrt{5}} \left(\left(\frac{1+\sqrt{5}}{2}\right)^{n+1}\!\!\!\!-\left(\frac{1-\sqrt{5}}{2}\right)^{n+1}\right)
   =:
   F_{n+1}, \\  
L_{n}^{(2)}
   &=
   \left(\frac{1+\sqrt{5}}{2}\right)^{n+1}\!\!\!\!+\left(\frac{1-\sqrt{5}}{2}\right)^{n+1}
   =:
   L_{n},
\end{align*}
the sequences $\{F_{n+1}^{(r)}\}_{n}$ and $\{L_{n}^{(r)}\}_{n}$ are regarded as a higher order analogue of the classical Fibonacci and Lucas numbers. 
From Theorem \ref{thm:the first kind}, Theorem \ref{thm:the second kind} and some fundamental results for symmetric functions, we propose the following fundamental formulas for $F_{n+1}^{(r)}$ and $L_{n}^{(r)}$. 
\begin{thm}[Explicit formulas of $F_{n}^{(r)}$ and $L_{n}^{(r)}$]
\label{thm:explicit formulas}
\begin{align}
F_{n}^{(r)}%, \\
   &=
   \sum_{k= 0}^{\left\lfloor \frac{n-1}{2}\right\rfloor}
   \frac{1}{2}\left((-1)^{\left\lfloor \frac{n-1-2k}{2r+1}\right\rfloor}-(-1)^{\left\lfloor \frac{n-2k-3}{2r+1}\right\rfloor}\right)
   c_{n+r-2,k} \nonumber \\
%  \left(\binom{n+r-2}{k}-\binom{n+r-2}{k-1}\right) \nonumber \\
   &=
   \sum_{k= 0}^{\left\lfloor \frac{n-1}{2r+1}\right\rfloor}
   (-1)^{k}
   c_{n+r-2,\left\lfloor \frac{n-1-(2r+1)k}{2}\right\rfloor}, \\
   % \left(\binom{n+r-2}{\left\lfloor \frac{n-1-(2r+1)k}{2}\right\rfloor}-\binom{n+r-2}{\left\lfloor \frac{n-3-(2r+1)k}{2}\right\rfloor}\right), \nonumber \\
L_{n}^{(r)}
   &=
   (-1)^{n+1}2^{n-1}
   +(-1)^{n}\frac{2r+1}{2}\sum_{k=0}^{n}
   \binom{n}{k}
   \delta _{2r+1 \mid n-2k} \nonumber \\
   &=\begin{cases}
   -2^{2m-1}
   +\frac{2r+1}{2}\sum_{k=-\left\lfloor \frac{m}{2r+1}\right\rfloor}^{\left\lfloor \frac{m}{2r+1}\right\rfloor}
   \binom{2m}{m-(2r+1)k} & (n=2m) \\
   4^{m}
   -\frac{2r+1}{2}\sum_{k=-\left\lfloor \frac{m+r+1}{2r+1}\right\rfloor}^{\left\lfloor \frac{m-r}{2r+1}\right\rfloor}
   \binom{2m+1}{m-(2r+1)k-r} & (n=2m+1)
   \end{cases}, 
\end{align}
where
$$
\delta _{2r+1 \mid n-2k}
   :=
   \begin{cases}
   0 & (2r+1 \not| \,\,\, n-2k)\\
   1 & (2r+1 \mid n-2k)
   \end{cases}.
$$
\end{thm}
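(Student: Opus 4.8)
The plan is to derive both statements from the first-kind formulas of Theorem~\ref{thm:the first kind}, once we observe that the arguments occurring in the definitions of $F_{n+1}^{(r)}$ and $L_{n}^{(r)}$ are specializations of $(\mathbf{z}+\mathbf{z}^{-1})$ and $(\mathbf{z},\mathbf{z}^{-1})$ at which the symmetric polynomials in $2r$ variables can be evaluated in closed form.

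Set $\mathbf{z}:=-\boldsymbol{\zeta}^{\iota}\in(\mathbb{C}^{\times})^{r}$, so that $\mathbf{z}^{-1}=-\boldsymbol{\zeta}^{-\iota}$ and $\mathbf{z}+\mathbf{z}^{-1}=-\boldsymbol{\zeta}^{\iota}-\boldsymbol{\zeta}^{-\iota}$ is exactly the vector appearing in the definitions of $F_{n+1}^{(r)}$ and $L_{n}^{(r)}$. Writing $\omega:=e^{2\pi\sqrt{-1}/(2r+1)}$, the $r$ entries of $\boldsymbol{\zeta}^{\iota}$ are $\omega,\omega^{2},\ldots,\omega^{r}$ up to order, so the $2r$ entries of $(\mathbf{z},\mathbf{z}^{-1})$ are $-\omega,-\omega^{2},\ldots,-\omega^{2r}$; since $(-\omega^{k})^{2r+1}=-1$, these are precisely the roots of $(x^{2r+1}+1)/(x+1)=x^{2r}-x^{2r-1}+\cdots+1$. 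From the standard identity $\prod_{k=0}^{2r}(1+\omega^{k}t)=1+t^{2r+1}$ one then gets
\[
\sum_{m\ge 0}h_{m}^{(2r)}(\mathbf{z},\mathbf{z}^{-1})\,t^{m}
=\prod_{k=1}^{2r}\frac{1}{1+\omega^{k}t}
=\frac{1+t}{1+t^{2r+1}}
=(1+t)\sum_{j\ge 0}(-1)^{j}t^{(2r+1)j},
\]
so that $h_{m}^{(2r)}(\mathbf{z},\mathbf{z}^{-1})=\tfrac12\bigl((-1)^{\lfloor m/(2r+1)\rfloor}-(-1)^{\lfloor(m-2)/(2r+1)\rfloor}\bigr)$, which equals $(-1)^{\lfloor m/(2r+1)\rfloor}$ when $m\equiv 0,1\pmod{2r+1}$ and is $0$ otherwise. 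Likewise, using $\sum_{k=0}^{2r}\omega^{jk}=(2r+1)\delta_{2r+1\mid j}$, one obtains $p_{j}^{(2r)}(\mathbf{z},\mathbf{z}^{-1})=\sum_{k=1}^{2r}(-\omega^{k})^{j}=(-1)^{j}\bigl((2r+1)\delta_{2r+1\mid j}-1\bigr)$ for every $j\ge 0$.

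For $F_{n}^{(r)}$ I would apply Theorem~\ref{thm:the first kind}(2) with $m=n-1$, giving
\[
F_{n}^{(r)}=h_{n-1}^{(r)}(\mathbf{z}+\mathbf{z}^{-1})=\sum_{k=0}^{\lfloor(n-1)/2\rfloor}c_{n+r-2,k}\,h_{n-1-2k}^{(2r)}(\mathbf{z},\mathbf{z}^{-1}),
\]
and substituting the first closed form for $h_{n-1-2k}^{(2r)}$ yields the first displayed formula for $F_{n}^{(r)}$. For the second formula, observe that since $2r+1$ is odd, for each $j\ge 0$ with $(2r+1)j\le n-1$ there is a \emph{unique} $k$ in the range of summation with $n-1-2k\equiv 0$ or $1\pmod{2r+1}$, namely $k=\lfloor(n-1-(2r+1)j)/2\rfloor$, at which $h_{n-1-2k}^{(2r)}(\mathbf{z},\mathbf{z}^{-1})=(-1)^{j}$; re-summing over these $j$ produces the second expression. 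For $L_{n}^{(r)}$ I would apply Theorem~\ref{thm:the first kind}(3), obtaining $L_{n}^{(r)}=\frac12\sum_{k=0}^{n}\binom{n}{k}p_{|n-2k|}^{(2r)}(\mathbf{z},\mathbf{z}^{-1})$; since $(-1)^{|n-2k|}=(-1)^{n}$ and $\delta_{2r+1\mid|n-2k|}=\delta_{2r+1\mid n-2k}$, inserting the value of $p_{|n-2k|}^{(2r)}$ and using $\sum_{k}\binom{n}{k}=2^{n}$ gives the first closed form for $L_{n}^{(r)}$. The even/odd split then follows by writing $n=2m$ respectively $n=2m+1$, so that $n-2k=2(m-k)$ respectively $2(m-k)+1$, solving the congruence $2(m-k)\equiv 0$ respectively $2(m-k)\equiv-1\pmod{2r+1}$ via the inverse $r+1$ of $2$ modulo $2r+1$, and re-indexing, the stated summation ranges $-\lfloor m/(2r+1)\rfloor\le k\le\lfloor m/(2r+1)\rfloor$ (and its odd analogue) coming from the constraint $0\le k\le n$.

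The generating-function expansion and the root-of-unity sum are routine; the only steps that require care are the index bookkeeping in the two re-summations — verifying that exactly one term survives per multiple of $2r+1$ in the computation of $F_{n}^{(r)}$, and converting the divisibility conditions into the stated index ranges for $L_{n}^{(r)}$ — together with keeping track of the degree shift $F_{n}^{(r)}=h_{n-1}^{(r)}(\mathbf{z}+\mathbf{z}^{-1})$. I do not expect a genuine obstacle: the difficulty is entirely in the combinatorial index arithmetic, and no step uses anything beyond Theorem~\ref{thm:the first kind}, the standard generating functions for $h_{n}$ and $p_{n}$, and the orthogonality of roots of unity.
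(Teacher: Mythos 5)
Your proposal is correct and follows essentially the same route as the paper: the author also evaluates $h_{n}^{(2r)}(-\boldsymbol{\zeta}^{+\iota},-\boldsymbol{\zeta}^{-\iota})$ via the generating function $(1+y)/(1+y^{2r+1})$ and $p_{n}^{(2r)}(-\boldsymbol{\zeta}^{+\iota},-\boldsymbol{\zeta}^{-\iota})$ via root-of-unity orthogonality (Proposition \ref{thm:prop2}), and then substitutes these into Theorem \ref{thm:the first kind} (2) and (3) exactly as you do, with the same degree shift and index re-summations.
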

\begin{thm}[Inversion of the explicit formulas]
\label{thm:inversion of explicit formulas}
\begin{align}
%    \sum_{k=0}^{\left\lfloor \frac{n}{2}\right\rfloor}
%    \frac{(n-2k+r)_{k}}{k!}
%    F_{n-2k+1}^{(r)}
%    &=
& \sum_{k=0}^{\left\lfloor \frac{n}{2}\right\rfloor}
   \binom{n-k+r-1}{k}
   F_{n-2k+1}^{(r)} \nonumber \\
   &=
%    h_{n}^{(2r)}(-\boldsymbol{\zeta}^{+\delta },-\boldsymbol{\zeta}^{-\delta })
%    =
   \frac{1}{2}\left((-1)^{\left\lfloor \frac{n-1-2k}{2r+1}\right\rfloor}-(-1)^{\left\lfloor \frac{n-2k-3}{2r+1}\right\rfloor}\right)
   =\begin{cases}
   1 & (n\equiv 0,1\,(\mathrm{mod}\,4r+2)) \\
   -1 & (n\equiv 2r+1,2r+2\,(\mathrm{mod}\,4r+2)) \\
   0 & ({\rm{others}})
   \end{cases}, \\
& 2\sum_{k= 0}^{\lfloor \frac{n+1}{2}\rfloor}\binom{2k-n-1}{k}
   L_{n-2k}^{(r)}
   -\sum_{k= 0}^{\lfloor \frac{n}{2}\rfloor}\binom{2k-n}{k}L_{n-2k}^{(r)} \nonumber \\
   &=
%    p_{n}^{(2r)}(-\boldsymbol{\zeta}^{+\delta },-\boldsymbol{\zeta}^{-\delta })
%    =
   (-1)^{n}(-1+(2r+1)\delta_{2r+1\mid n})
   =
   \begin{cases}
   (-1)^{n-1} & (2r+1\not| \,\,\,n) \\
   (-1)^{n}2r & (2r+1\mid n)
   \end{cases}.     
\end{align}
\end{thm}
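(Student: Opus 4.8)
The plan is to derive both identities by specialising the second kind formulas of Theorem~\ref{thm:the second kind} to the point $\mathbf{z}=-\boldsymbol{\zeta}^{\iota}\in{\mathbb{C}^{\times}}^{r}$. Since \eqref{eq:main complete2} and \eqref{eq:main power2} hold as identities of rational functions in $z_{1},\ldots,z_{r}$ valid whenever all $z_{j}\neq 0$, we may set $\mathbf{z}=-\boldsymbol{\zeta}^{\iota}$, so that $\mathbf{z}^{-1}=-\boldsymbol{\zeta}^{-\iota}$, $(\mathbf{z},\mathbf{z}^{-1})=(-\boldsymbol{\zeta}^{\iota},-\boldsymbol{\zeta}^{-\iota})$ and $\mathbf{z}+\mathbf{z}^{-1}=-\boldsymbol{\zeta}^{\iota}-\boldsymbol{\zeta}^{-\iota}$. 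By the definitions of $F_{n+1}^{(r)}$ and $L_{n}^{(r)}$ we have $h_{k}^{(r)}(\mathbf{z}+\mathbf{z}^{-1})=F_{k+1}^{(r)}$ and $p_{k}^{(r)}(\mathbf{z}+\mathbf{z}^{-1})=L_{k}^{(r)}$, so that the right hand side of \eqref{eq:main complete2} becomes the left hand side of the first identity and the right hand side of \eqref{eq:main power2} becomes the left hand side of the second identity. Hence the whole problem reduces to evaluating $h_{n}^{(2r)}(-\boldsymbol{\zeta}^{\iota},-\boldsymbol{\zeta}^{-\iota})$ and $p_{n}^{(2r)}(-\boldsymbol{\zeta}^{\iota},-\boldsymbol{\zeta}^{-\iota})$ in closed form.

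I would treat the power sum case first, as it is immediate. Writing $\omega:=e^{2\pi\sqrt{-1}/(2r+1)}$, the $2r$ entries of $(-\boldsymbol{\zeta}^{\iota},-\boldsymbol{\zeta}^{-\iota})$ are exactly $-\omega,-\omega^{2},\ldots,-\omega^{2r}$, that is, $-1$ times the nontrivial $(2r+1)$-th roots of unity. Therefore
\[
p_{n}^{(2r)}(-\boldsymbol{\zeta}^{\iota},-\boldsymbol{\zeta}^{-\iota})=(-1)^{n}\sum_{i=1}^{2r}\omega^{in}=(-1)^{n}\Bigl(\sum_{i=0}^{2r}\omega^{in}-1\Bigr),
\]
and the orthogonality relation $\sum_{i=0}^{2r}\omega^{in}=(2r+1)\,\delta_{2r+1\mid n}$ gives $p_{n}^{(2r)}(-\boldsymbol{\zeta}^{\iota},-\boldsymbol{\zeta}^{-\iota})=(-1)^{n}\bigl(-1+(2r+1)\delta_{2r+1\mid n}\bigr)$, which is exactly the claimed value; separating the cases $2r+1\mid n$ and $2r+1\nmid n$ yields the displayed case distinction of the second identity.

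For the complete homogeneous case I would use the generating function $\sum_{n\ge 0}h_{n}^{(2r)}(\mathbf{w})\,t^{n}=\prod_{i=1}^{2r}(1-w_{i}t)^{-1}$ for $\mathbf{w}=(w_{1},\ldots,w_{2r})$, with $w_{i}=-\omega^{i}$. Substituting $x\mapsto -1/t$ in $\prod_{i=0}^{2r}(x-\omega^{i})=x^{2r+1}-1$ gives $\prod_{i=0}^{2r}(1+\omega^{i}t)=1+t^{2r+1}$, hence $\prod_{i=1}^{2r}(1+\omega^{i}t)=(1+t^{2r+1})/(1+t)$ and
\[
\sum_{n\ge 0}h_{n}^{(2r)}(-\boldsymbol{\zeta}^{\iota},-\boldsymbol{\zeta}^{-\iota})\,t^{n}=\frac{1+t}{1+t^{2r+1}}=\sum_{j\ge 0}(-1)^{j}\bigl(t^{(2r+1)j}+t^{(2r+1)j+1}\bigr).
\]
Reading off the coefficient of $t^{n}$ shows that $h_{n}^{(2r)}(-\boldsymbol{\zeta}^{\iota},-\boldsymbol{\zeta}^{-\iota})$ equals $1$ when $n\equiv 0,1$, equals $-1$ when $n\equiv 2r+1,2r+2$, and vanishes otherwise, all modulo $4r+2$; a direct check that this agrees with $\tfrac12\bigl((-1)^{\lfloor n/(2r+1)\rfloor}-(-1)^{\lfloor (n-2)/(2r+1)\rfloor}\bigr)$ finishes the first identity.

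None of these steps is a serious obstacle. The only points needing a little attention are the factorisation $\prod_{i=0}^{2r}(1+\omega^{i}t)=1+t^{2r+1}$, the residue bookkeeping modulo $4r+2$ when extracting the $t^{n}$--coefficient, and the routine check that after the substitution $\mathbf{z}=-\boldsymbol{\zeta}^{\iota}$ the summation ranges appearing in \eqref{eq:main complete2} and \eqref{eq:main power2} match those written in the statement.
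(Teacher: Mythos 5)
Your proposal is correct and follows essentially the same route as the paper: specialize the second kind formulas \eqref{eq:main complete2} and \eqref{eq:main power2} at $\mathbf{z}=-\boldsymbol{\zeta}^{+\iota}$ and evaluate $h_{n}^{(2r)}(-\boldsymbol{\zeta}^{+\iota},-\boldsymbol{\zeta}^{-\iota})$ via the generating function $\frac{1+y}{1+y^{2r+1}}$ and $p_{n}^{(2r)}(-\boldsymbol{\zeta}^{+\iota},-\boldsymbol{\zeta}^{-\iota})$ via roots-of-unity orthogonality, exactly as in Proposition \ref{thm:prop2} (2) and (3). The only difference is cosmetic: the paper packages these two evaluations as a separate proposition before invoking them.
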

\begin{thm}[Initial values and recurrence formulas]
\label{thm:initial values and recurrence formulas}
Initial values of $\{F_{n+1}^{(r)}\}_{n \in \mathbb{Z}}$ and $\{L_{n}^{(r)}\}_{n \in \mathbb{Z}}$ are given by 
\begin{align}
&F_{1}^{(r)}=1, \quad F_{0}^{(r)}=F_{-1}^{(r)}=\cdots =F_{-r+2}^{(r)}=0, \\
&L_{n}^{(r)}
   =\begin{cases}
   -2^{2m-1}+\frac{2r+1}{2}\binom{2m}{m} & (n=2m) \\
   4^{m} & (n=2m+1) 
   \end{cases} \quad 
   (n=0,1,\ldots,r-1). 
\end{align}
The sequences $\{F_{n+1}^{(r)}\}_{n \in \mathbb{Z}}$ and $\{L_{n}^{(r)}\}_{n \in \mathbb{Z}}$ satisfy the following same recursion:
\begin{equation}
\label{eq:recursion of F and L}
a_{n+r}^{(r)}
   =
   \sum_{j=0}^{\left\lfloor \frac{r-1}{2}\right\rfloor}
   (-1)^{j}\binom{r-1-j}{j}a_{n+r-1-2j}^{(r)}
   +\sum_{j=0}^{\left\lfloor \frac{r-2}{2}\right\rfloor}
   (-1)^{j}\binom{r-1-j}{j+1}a_{n+r-2-2j}^{(r)}.
\end{equation}
\end{thm}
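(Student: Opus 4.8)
The plan is to deal with all three assertions at once, starting from the observation that both sequences are built from the same $r$ numbers. Put $\omega:=e^{2\pi\sqrt{-1}/(2r+1)}$ and
\[
x_j:=-(\omega^j+\omega^{-j})=-2\cos\bigl(\tfrac{2\pi j}{2r+1}\bigr),\qquad j=1,\dots,r,
\]
so that $\mathbf{x}:=(x_1,\dots,x_r)$ is exactly $-\boldsymbol{\zeta}^{+\iota}-\boldsymbol{\zeta}^{-\iota}$. Then $L_n^{(r)}=p_n^{(r)}(\mathbf{x})=\sum_{j=1}^{r}x_j^{\,n}$, while the usual generating function of the complete homogeneous polynomials gives $\sum_{n\ge 0}F_{n+1}^{(r)}t^n=\sum_{n\ge 0}h_n^{(r)}(\mathbf{x})t^n=\prod_{j=1}^{r}(1-x_jt)^{-1}$. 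Hence both sequences are governed by the monic polynomial
\[
\chi_r(X):=\prod_{j=1}^{r}(X-x_j)=\prod_{j=1}^{r}(X+\omega^j+\omega^{-j})
\]
of degree $r$, and the heart of the proof is an explicit formula for $\chi_r$.

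First I would compute $\chi_r$. Substituting $X=y+y^{-1}$ and using $X+\omega^j+\omega^{-j}=y^{-1}(y+\omega^j)(y+\omega^{-j})$, together with the fact that $\{\omega^k:1\le k\le 2r\}$ is the set of all $(2r+1)$-th roots of unity other than $1$ (so $\prod_{k=1}^{2r}(y+\omega^k)=\prod_{k=0}^{2r}(y+\omega^k)/(y+1)=(y^{2r+1}+1)/(y+1)$), I obtain
\[
\chi_r(X)=y^{-r}\,\frac{y^{2r+1}+1}{y+1}=\sum_{m=-r}^{r}(-1)^{m+r}y^{m}.
\]
Writing $w:=-y$, so that $w+w^{-1}=-X$, this becomes $(-1)^r\sum_{m=-r}^{r}w^m$; splitting the sum according to the parity of $m$ and using the telescoping identity $\sum_{i=0}^{n}w^{n-2i}=\widetilde U_n(w+w^{-1})$, where
\[
\widetilde U_n(X):=\sum_{j=0}^{\lfloor n/2\rfloor}(-1)^j\binom{n-j}{j}X^{n-2j}
\]
is the normalized Chebyshev polynomial of the second kind, shows $\sum_{m=-r}^{r}w^m=\widetilde U_r(w+w^{-1})+\widetilde U_{r-1}(w+w^{-1})$. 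Since every monomial of $\widetilde U_n$ has degree congruent to $n$ modulo $2$, one has $\widetilde U_n(-X)=(-1)^n\widetilde U_n(X)$, whence
\[
\chi_r(X)=\widetilde U_r(X)-\widetilde U_{r-1}(X).
\]
Finally, writing $\widetilde U_r(X)=X^r+\sum_{j=1}^{\lfloor r/2\rfloor}(-1)^j\binom{r-j}{j}X^{r-2j}$ and shifting $j\mapsto j+1$ in this last sum (using $\lfloor r/2\rfloor-1=\lfloor(r-2)/2\rfloor$), I would check termwise that $\widetilde U_r(X)-\widetilde U_{r-1}(X)$ equals $X^r$ minus the two binomial sums on the right-hand side of (\ref{eq:recursion of F and L}); thus $\chi_r$ is precisely the characteristic polynomial of the recursion (\ref{eq:recursion of F and L}). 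Since the $x_j$ are pairwise distinct and nonzero, $L_n^{(r)}=\sum_j x_j^{\,n}$ satisfies this recursion, and since the denominator $\prod_j(1-x_jt)$ of the generating function of $(F_{n+1}^{(r)})_n$ is the reciprocal polynomial $t^r\chi_r(1/t)$, so does $(F_{n+1}^{(r)})_n$. This proves (\ref{eq:recursion of F and L}).

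For the initial values of $F^{(r)}$: clearly $F_1^{(r)}=h_0^{(r)}(\mathbf{x})=1$, and since $\chi_r(0)=\pm 1\neq 0$ the order-$r$ recursion is invertible, so running it backwards and matching against the first $r$ coefficients of the identity $1=\bigl(\prod_{j=1}^{r}(1-x_jt)\bigr)\sum_{n\ge 0}F_{n+1}^{(r)}t^n$ forces successively $F_0^{(r)}=F_{-1}^{(r)}=\dots=F_{-r+2}^{(r)}=0$. For the initial values of $L^{(r)}$ with $0\le n\le r-1$, I would invoke the explicit formula for $L_n^{(r)}$ already established in Theorem~\ref{thm:explicit formulas}: in the sum $\sum_{k=0}^{n}\binom{n}{k}\delta_{2r+1\mid n-2k}$ one has $|n-2k|\le n\le r-1<2r+1$, so the divisibility $2r+1\mid n-2k$ forces $n=2k$; hence the sum reduces to the single term $\binom{2m}{m}$ when $n=2m$ and is empty when $n=2m+1$, and substituting gives exactly $L_{2m}^{(r)}=-2^{2m-1}+\tfrac{2r+1}{2}\binom{2m}{m}$ and $L_{2m+1}^{(r)}=4^m$.

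The real content sits in the evaluation of $\chi_r$ and its rewriting as $\widetilde U_r-\widetilde U_{r-1}$, which is short but is where the arithmetic of $(2r+1)$-th roots of unity genuinely enters. The step I expect to be the main obstacle is the purely combinatorial matching of $\widetilde U_r(X)-\widetilde U_{r-1}(X)$ against the two binomial sums in (\ref{eq:recursion of F and L}) index by index, together with the care needed to justify the backward extension of $(F_m^{(r)})_{m\in\mathbb{Z}}$ and the consequent vanishing of $F_0^{(r)},\dots,F_{-r+2}^{(r)}$.
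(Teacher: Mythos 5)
Your argument is correct, and it reaches the same three conclusions by a route that is recognizably parallel to, but packaged differently from, the paper's. The common core is the evaluation of the degree-$r$ polynomial with roots $-2\cos(2\pi j/(2r+1))$: your computation $\chi_r(X)=y^{-r}(y^{2r+1}+1)/(y+1)=\widetilde U_r(X)-\widetilde U_{r-1}(X)$ is essentially the same generating-function calculation the paper records (as a remark) for $e_m^{(r)}(-\boldsymbol{\zeta}^{+\iota}-\boldsymbol{\zeta}^{-\iota})=(-1)^{\lfloor m/2\rfloor}\binom{r-\lfloor (m+1)/2\rfloor}{\lfloor m/2\rfloor}$, and the index-by-index matching of $\widetilde U_r-\widetilde U_{r-1}$ against the two binomial sums in (\ref{eq:recursion of F and L}) is exactly the even/odd split the paper leaves implicit. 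Where you diverge: the paper deduces the recursion by substituting these $e_m^{(r)}$ into the Wronski relation (for $h_n$, hence $F_{n+1}^{(r)}$) and Newton's formula (for $p_n$, hence $L_n^{(r)}$), whereas you invoke the general principle that power sums of the roots of $\chi_r$ and the Taylor coefficients of $1/\prod_j(1-x_jt)$ both satisfy the linear recurrence with characteristic polynomial $\chi_r$ --- an equivalent but more self-contained formulation that also makes the invertibility $\chi_r(0)=\pm1$ visible. For the initial values of $F^{(r)}$ the paper appeals to the vanishing of the negative-index Schur functions $s_{(-n)}$ for $n=1,\dots,r-1$, while you obtain the same vanishing by running the invertible recursion backwards against the identity $\prod_j(1-x_jt)\cdot\sum_{n\ge0}F_{n+1}^{(r)}t^n=1$; both are valid, though you should note that the paper \emph{defines} $F_m^{(r)}$ for $m\le 0$ via $h_{m-1}^{(r)}:=s_{(m-1)}$, so strictly one should observe that this definition and your backward-recursion extension agree (they do, since $s_{(n)}$ satisfies the Wronski recursion for all $n\in\mathbb{Z}$). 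Your derivation of the $L$ initial values from the explicit formula of Theorem \ref{thm:explicit formulas} coincides with the paper's (via Corollary \ref{thm:initial conditions}).
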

% For examples, by some specializations of our results we obtain the following type formulas (see Corollary\,\ref{thm:Fibonacci version}):
% \begin{align}
% \sum_{k=0}^{\left\lfloor \frac{n}{2}\right\rfloor}
%    (-1)^{k}\binom{n-k}{k}   
%    &=\begin{cases}
%    1 & (n\equiv 0,1\,(\mathrm{mod}\,6)) \\
%    -1 & (n\equiv 3,4\,(\mathrm{mod}\,6)) \\
%    0 & ({\rm{others}})
%    \end{cases}, \nonumber \\
% \sum_{k=0}^{\left\lfloor \frac{n}{2}\right\rfloor}
%    (-1)^{k}\binom{n-k+1}{k}
%    F_{n-2k+1}
%    &=\begin{cases}
%    1 & (n\equiv 0,1\,(\mathrm{mod}\,10)) \\
%    -1 & (n\equiv 5,6\,(\mathrm{mod}\,10)) \\
%    0 & ({\rm{others}})
%    \end{cases}, \nonumber 
% \end{align}
% \begin{align}
%    \sum_{k= 0}^{\left\lfloor \frac{m}{2}\right\rfloor}
%    \frac{1}{2}\left((-1)^{\left\lfloor \frac{m-2k}{3}\right\rfloor}-(-1)^{\left\lfloor \frac{m-2k-2}{3}\right\rfloor}\right)
%    \left(\binom{m}{k}-\binom{m}{k-1}\right)
%    &=
%    1, \nonumber \\
% \sum_{k= 0}^{\left\lfloor \frac{m}{2}\right\rfloor}
%    \frac{1}{2}\left((-1)^{\left\lfloor \frac{m-2k}{5}\right\rfloor}-(-1)^{\left\lfloor \frac{m-2k-2}{5}\right\rfloor}\right)
%    \left(\binom{m+1}{k}-\binom{m+1}{k-1}\right)
%    &=
%    F_{m+1}, \nonumber 
% \end{align}
% where $n$ and $m$ are non-negative integers, $\lfloor x \rfloor$ is the greatest integer not exceeding $x \in \mathbb{R}$, $F_{m+1}$ is the classical Fibonacci numbers. 

The content of this article is as follows. 
In Section\,2, we refer some basic formulas for symmetric polynomials and the Gauss hypergeometric function from \cite{AAR} and \cite{M}. 
% and mention some special cases of these formulas. 
Section\,3 is the main part of this article. 
In this section, we prove Theorem \ref{thm:the first kind} and Theorem \ref{thm:the second kind}, and give their principal specialization.  
% We also consider principle specializations $z_{i}=q^{i}$ of our formulas. 
In Section\,4, we evaluate $e_{n}^{(2r)}\left(-\boldsymbol{\zeta}^{+\iota },-\boldsymbol{\zeta}^{-\iota }\right)$, $h_{n}^{(2r)}\left(-\boldsymbol{\zeta}^{+\iota },-\boldsymbol{\zeta}^{-\iota }\right)$ and $p_{n}^{(2r)}\left(-\boldsymbol{\zeta}^{+\iota },-\boldsymbol{\zeta}^{-\iota }\right)$ by the definitions and generating functions. 
By substituting these evaluations into Theorem \ref{thm:the first kind} and Theorem \ref{thm:the second kind}, we propose Theorem \ref{thm:explicit formulas}, Theorem \ref{thm:inversion of explicit formulas} and Theorem \ref{thm:initial values and recurrence formulas}. 
% we introduce a natural generalization of Fibonacci or Lucas numbers and give their fundamental formulas by applying our main results. 
Further, we consider some specializations of our main results and drive some interesting binomial sum formulas, including new formulas for the Fibonacci and Lucas numbers (see Corollary \ref{thm:new Fibonacci formulas} and Corollary \ref{thm:new Lucas formulas}). 
% Finally, in Section\,5, we mention other properties for $F_{n+1}^{(r)}$ and $L_{n}^{(r)}$
In Appendix, we mention some congruence properties and other formulas (generating functions, determinant formulas, some relations) for $F_{n+1}^{(r)}$ and $L_{n}^{(r)}$ that are proven independently of Theorem \ref{thm:the first kind} and Theorem \ref{thm:the second kind}. 
% Finally, we mention some future works in Section\,5. 
% We also investigate a multivariate analogue of the multiple Bernoulli polynomials which is a multiple analogue of our multivariate Bernoulli polynomials in Section\,4.
% \begin{align}
% E_{n}^{(r)}(\alpha )
%    &:=
%    e_{n}^{(2r)}(\alpha _{1},\alpha _{1}^{-1},\ldots , \alpha _{r},\alpha _{r}^{-1}), \nonumber \\
% P_{n}^{(r)}(\alpha )
%    &:=
%    p_{n}^{(2r)}(\alpha _{1},\alpha _{1}^{-1},\ldots , \alpha _{r},\alpha _{r}^{-1}), \nonumber \\
% H_{n}^{(r)}(\alpha )
%    &:=
%    h_{n}^{(2r)}(\alpha _{1},\alpha _{1}^{-1},\ldots , \alpha _{r},\alpha _{r}^{-1}), \nonumber \\
% C_{n}^{(r)}(\alpha )
%    &:=
%    e_{n}^{(r)}(\alpha _{1}+\alpha _{1}^{-1},\ldots , \alpha _{r}+\alpha _{r}^{-1}), \nonumber \\
% L_{n}^{(r)}(\alpha )
%    &:=
%    p_{n}^{(r)}(\alpha _{1}+\alpha _{1}^{-1},\ldots , \alpha _{r}+\alpha _{r}^{-1}), \nonumber \\
% F_{n+1}^{(r)}(\alpha )
%    &:=
%    h_{n}^{(r)}(\alpha _{1}+\alpha _{1}^{-1},\ldots , \alpha _{r}+\alpha _{r}^{-1}). \nonumber
% \end{align}

\section{Preliminaries}
% Refer to Macdonald \cite{M} and Andrews-Askey-Roy \cite{AAR} for the details in this section. 
\subsection{Symmetric polynomials}
Refer to Macdonald \cite{M} for the details in this subsection. 
We fix a positive integer $r$, and denote the partition set of length $r$ by 
$$
\lambda \in \mathcal{P}_{r}:=\{\nu =(\nu _{1},\ldots ,\nu _{r}) \in \mathbb{Z}_{\geq 0}^{r} \mid \nu _{1}\geq \ldots \geq \nu _{r}\}
$$
and the symmetric group of degree $r$ by $\mathfrak{S}_{r}$. 
For some special partitions, we use the following notations 
\begin{align}
(n)&:=(n,0,\ldots,0) \in \mathcal{P}_{r}, \nonumber \\
(1^{n})&:=(1,\ldots,1,0,\ldots,0) \in \mathcal{P}_{r}, \quad (n=1,\ldots ,r). \nonumber
\end{align}
The symmetric group $\mathfrak{S}_{r}$ acts on $\mathbf{z}=(z_{1},\ldots,z_{r}) \in \mathbb{C}^{r}$ by 
$$
\sigma .\mathbf{z}:=(z_{\sigma (1)},\ldots ,z_{\sigma (r)}). 
$$

For any partition $\lambda $, we define Schur polynomial $s_{\lambda }(\mathbf{z})$ and monomial symmetry polynomial $m_{\lambda }(\mathbf{z})$ by 
\begin{align}
\label{eq:def of Schur}
s_{\lambda }(\mathbf{z})
   &:=
      \frac{\det{\left(z_{i}^{\lambda _{j}+r-j}\right)}_{i,j=1,\ldots r}}{\det{\left(z_{i}^{r-j}\right)}_{i,j=1,\ldots r}}
   =
      \frac{\det{\left(z_{i}^{\lambda _{j}+r-j}\right)}_{i,j=1,\ldots r}}{\prod_{1\leq i<j\leq r}(z_{i}-z_{j})}, \\
m_{\lambda }(\mathbf{z})
   &:=
      \sum_{\nu \in \mathfrak{S}_{r}.\lambda }z_{1}^{\nu _{1}}\cdots z_{r}^{\nu _{r}},
\end{align}
where $\det$ is the usual determinant and 
$$
\mathfrak{S}_{r}.\lambda 
   :=\{\sigma .\lambda :=(\lambda _{\sigma (1)},\ldots ,\lambda _{\sigma (r)})\mid \sigma \in \mathfrak{S}_{r}\}. 
$$
We remark that Schur polynomial extends to Schur function $s_{\lambda }(\mathbf{z})$ for $\lambda =(\lambda _{1},\ldots ,\lambda _{r}) \in \mathbb{C}^{r}$ by (\ref{eq:def of Schur}).

It is well known that 
% \begin{align}
% e_{n}^{(r)}(x_{1},\ldots,x_{r})
%    &=
%       s_{(1^{n})}(x_{1},\ldots,x_{r})
%       =
%       m_{(1^{n})}(x_{1},\ldots,x_{r}), \nonumber \\
% p_{n}^{(r)}(x_{1},\ldots,x_{r})
%    &=
%       m_{(n)}(x_{1},\ldots,x_{r}), \nonumber \\
% h_{n}^{(r)}(x_{1},\ldots,x_{r})
%    &=
%    s_{(n)}(x_{1},\ldots,x_{r}), \nonumber
% \end{align}
\begin{align}
e_{n}^{(r)}(\mathbf{z})
   &=
      s_{(1^{n})}(\mathbf{z})
      =
      m_{(1^{n})}(\mathbf{z}), \\
p_{n}^{(r)}(\mathbf{z})
   &=
      m_{(n)}(\mathbf{z}), \\
\label{eq:Schur and complete homogeneous}
h_{n}^{(r)}(\mathbf{z})
   &=
   s_{(n)}(\mathbf{z}),
\end{align}
which is Schur $s_{(1^{n})}$ or monomial $m_{(1^{n})}$ with one column are $e_{n}^{(r)}$, and monomial $m_{(n)}$ with one row is $p_{n}^{(r)}$, and Schur $s_{(n)}$ with one row is $h_{n}^{(r)}$ (\cite{M} Chapter\,I Section\,3 (3.9)). 
From (\ref{eq:Schur and complete homogeneous}), we extend the complete homogeneous polynomials to $h_{n}^{(r)}(\mathbf{z})$ $(n\in \mathbb{Z})$: namely 
% $h_{n}^{(r)}(\mathbf{z})$ $(n\geq 0)$ 
\begin{equation}
\label{eq:negative complete homogeneous}
h_{-n}^{(r)}(\mathbf{z})
   :=
   s_{(-n)}(\mathbf{z}) \quad (n\geq 0).
\end{equation}
By this extension (\ref{eq:negative complete homogeneous}) and the definition of Schur function, for any $r$ we have
\begin{align}
\label{eq:vanishing property}
h_{-n}^{(r)}(\mathbf{z})=0 \quad (n=1,2,\ldots ,r-1). 
\end{align}

We list up some required formulas for symmetric polynomials in \cite{M}. 
\begin{lem}
{\rm{(1)}} Generating functions
\begin{align}
\label{eq:gen fnc of elemental}
\prod_{j=1}^{r}(1+z_{j}y)
   =\sum_{n=0}^{r}
      e_{n}^{(r)}(\mathbf{z})y^{n}, \\
\label{eq:gen fnc of complete}      
\prod_{j=1}^{r}\frac{1}{1-z_{j}y}
   =\sum_{n=0}^{\infty}
      h_{n}^{(r)}(\mathbf{z})y^{n}, \\
\label{eq:gen fnc of power}
\sum_{j=1}^{r}\frac{1}{1-z_{j}y}
   =\sum_{n=0}^{\infty}
      p_{n}^{(r)}(\mathbf{z})y^{n}.
\end{align}
{\rm{(2)}} $q$-binomial formula 
\begin{align}
\label{eq:q-binom}
\prod_{j=0}^{n-1}(1+q^{j}y)
   &=
   \sum_{k=0}^{n}\binom{n}{k}_{q}q^{\frac{k(k-1)}{2}}y^{k}, \\
   \label{eq:q-binom2}
\prod_{j=0}^{n-1}\frac{1}{1-q^{j}y}
   &=
   \sum_{k=0}^{\infty}\binom{n+k-1}{k}_{q}y^{k},
\end{align}
where $\binom{n}{k}_{q}$ denotes the $q$-binomial coefficient
$$
\binom{n}{k}_{q}:=\frac{(1-q^{n})\cdots (1-q^{n-k+1})}{(1-q)\cdots (1-q^{k})}.
$$
{\rm{(3)}} Wronski relation and Newton's formula 
% \begin{align}
% \label{eq:Wronski}
%       % h_{n+r}^{(r)}&=\sum_{j=1}^{r}(-1)^{j-1}e_{j}^{(r)}h_{n+r-j}^{(r)}, \\
% h_{n}^{(r)}(\mathbf{z})&=\sum_{j=1}^{\min{(n,r)}}(-1)^{j-1}e_{j}^{(r)}(\mathbf{z})h_{n-j}^{(r)}(\mathbf{z}), \\
% \label{eq:Newton}
% ne_{n}^{(r)}(\mathbf{z})&=\sum_{j=1}^{\min{(n,r)}}(-1)^{j-1}p_{j}^{(r)}(\mathbf{z})e_{n-j}^{(r)}(\mathbf{z}).
% \end{align}
\begin{align}
\label{eq:Wronski}
      % h_{n+r}^{(r)}&=\sum_{j=1}^{r}(-1)^{j-1}e_{j}^{(r)}h_{n+r-j}^{(r)}, \\
\sum_{j=0}^{\min{(n,r)}}(-1)^{j-1}e_{j}^{(r)}(\mathbf{z})h_{n-j}^{(r)}(\mathbf{z})
   &=
   0, \\
\label{eq:Newton}
% \sum_{j=0}^{\min{(n,r)}}(-1)^{j-1}p_{j}^{(r)}(\mathbf{z})e_{n-j}^{(r)}(\mathbf{z})
%    &=
%    0. 
\sum_{j=0}^{\min{(n,r)}}(-1)^{n-j}p_{n-j+1}^{(r)}(\mathbf{z})e_{j}^{(r)}(\mathbf{z})
   &=
   (n+1)e_{n+1}^{(r)}(\mathbf{z}). 
\end{align}
\end{lem}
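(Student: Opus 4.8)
The plan is to prove each of the three parts by direct coefficient extraction, since all are classical facts recorded in \cite{M}; the only genuinely computational point is the $q$-binomial bookkeeping in part (2).

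For part (1), I would expand each product and read off the coefficient of $y^{n}$. In $\prod_{j=1}^{r}(1+z_{j}y)$, selecting the term $z_{j}y$ from exactly $n$ of the factors and $1$ from the rest yields $\sum_{1\le j_{1}<\cdots<j_{n}\le r}z_{j_{1}}\cdots z_{j_{n}}\,y^{n}$, which is $e_{n}^{(r)}(\mathbf{z})y^{n}$ by \eqref{eq:elementary sum} and truncates at $n=r$; this gives \eqref{eq:gen fnc of elemental}. For the remaining two I would use the geometric expansion $\frac{1}{1-z_{j}y}=\sum_{m\ge 0}z_{j}^{m}y^{m}$. Multiplying $r$ such series and collecting the coefficient of $y^{n}$ produces $\sum_{m_{1}+\cdots+m_{r}=n}z_{1}^{m_{1}}\cdots z_{r}^{m_{r}}=h_{n}^{(r)}(\mathbf{z})$, which is \eqref{eq:gen fnc of complete}; summing the geometric series over $j$ instead of multiplying gives $\sum_{n\ge 0}\bigl(\sum_{j=1}^{r}z_{j}^{n}\bigr)y^{n}=\sum_{n\ge 0}p_{n}^{(r)}(\mathbf{z})y^{n}$, which is \eqref{eq:gen fnc of power}.

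For part (2), I would specialize part (1) to the geometric progression $\mathbf{z}=(1,q,\ldots,q^{n-1})$. Then \eqref{eq:gen fnc of elemental} and \eqref{eq:gen fnc of complete} reduce \eqref{eq:q-binom} and \eqref{eq:q-binom2} to the principal specializations $e_{k}^{(n)}(1,q,\ldots,q^{n-1})=\binom{n}{k}_{q}q^{k(k-1)/2}$ and $h_{k}^{(n)}(1,q,\ldots,q^{n-1})=\binom{n+k-1}{k}_{q}$. I would establish the first by induction on $n$: multiplying $\prod_{j=0}^{n-1}(1+q^{j}y)$ by the extra factor $(1+q^{n}y)$ and matching the coefficient of $y^{k}$ against the $q$-Pascal recurrence $\binom{n+1}{k}_{q}=\binom{n}{k}_{q}+q^{\,n-k+1}\binom{n}{k-1}_{q}$, where the shift $\binom{k}{2}-\binom{k-1}{2}=k-1$ produces exactly the needed power of $q$. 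For the second, the substitution $y\mapsto qy$ reduces the $(n+1)$-factor product $\prod_{j=0}^{n}\frac{1}{1-q^{j}y}$ to the $n$-factor one, and comparing coefficients leads to the telescoping identity $\sum_{i=0}^{k}q^{i}\binom{n+i-1}{i}_{q}=\binom{n+k}{k}_{q}$, itself a consequence of the companion $q$-Pascal rule $\binom{n+k}{k}_{q}=\binom{n+k-1}{k-1}_{q}+q^{k}\binom{n+k-1}{k}_{q}$. This tracking of the $q$-powers is the part most prone to error and is where I would be most careful.

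For part (3), I would argue at the level of generating functions. Writing $E(y):=\prod_{j=1}^{r}(1+z_{j}y)$ and $H(y):=\prod_{j=1}^{r}\frac{1}{1-z_{j}y}$, the identity $E(-y)H(y)=\prod_{j=1}^{r}(1-z_{j}y)\cdot\prod_{j=1}^{r}\frac{1}{1-z_{j}y}=1$ together with \eqref{eq:gen fnc of elemental} and \eqref{eq:gen fnc of complete} gives $\bigl(\sum_{n}e_{n}^{(r)}(-y)^{n}\bigr)\bigl(\sum_{n}h_{n}^{(r)}y^{n}\bigr)=1$; extracting the coefficient of $y^{n}$ for $n\ge 1$ yields $\sum_{j}(-1)^{j}e_{j}^{(r)}h_{n-j}^{(r)}=0$, which is \eqref{eq:Wronski} after multiplying by $-1$, the range $0\le j\le\min(n,r)$ coming from $e_{j}^{(r)}=0$ for $j>r$. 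For Newton's formula I would take the logarithmic derivative of $E(y)$: expanding $\frac{E'(y)}{E(y)}=\sum_{j=1}^{r}\frac{z_{j}}{1+z_{j}y}=\sum_{m\ge 0}(-1)^{m}p_{m+1}^{(r)}(\mathbf{z})\,y^{m}$ and clearing the denominator gives $E'(y)=E(y)\sum_{m\ge 0}(-1)^{m}p_{m+1}^{(r)}y^{m}$; since $E'(y)=\sum_{n\ge 0}(n+1)e_{n+1}^{(r)}y^{n}$, comparing the coefficient of $y^{n}$ produces \eqref{eq:Newton} directly.
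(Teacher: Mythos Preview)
Your proofs are correct and standard. Note, however, that the paper does not actually prove this lemma: immediately after the statement it simply records that \eqref{eq:gen fnc of elemental}, \eqref{eq:gen fnc of complete} are \cite{M} p.\,19 (2.2) and p.\,21 (2.5), that \eqref{eq:q-binom}, \eqref{eq:q-binom2} are \cite{M} p.\,26 Examples~3, and that \eqref{eq:Wronski}, \eqref{eq:Newton} are \cite{M} p.\,21 (2.6$'$) and p.\,23 (2.11$'$). So you have supplied full arguments where the paper only gives citations; your derivations---coefficient extraction for (1), principal specialization plus $q$-Pascal for (2), and $E(-y)H(y)=1$ together with the logarithmic derivative of $E(y)$ for (3)---are precisely the classical proofs one finds in Macdonald, so there is no substantive divergence, only a difference in the level of detail written out.
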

Actually, (\ref{eq:gen fnc of elemental}) is \cite{M} p19 (2.2) and (\ref{eq:gen fnc of complete}) is \cite{M} p21 (2.5) exactly. 
For (\ref{eq:q-binom}) and (\ref{eq:q-binom2}), see \cite{M} p26 Examples\,3. 
Similarly, (\ref{eq:Wronski}) and (\ref{eq:Newton}) are \cite{M} p21 (2.6${}^{\prime}$) and p23 (2.11${}^{\prime}$) respectively. 

\subsection{The Gauss hypergeometric function}
Let $a,b,c,z$ be complex numbers such that $c$ is not non-negative integers, 
and $(a)_{m}$ be the raising factorial defined by 
$$
(a)_{m}
   :=
   \begin{cases}
   a(a+1)\cdots (a+m-1) & (m\not=0) \\
   1 & (m=0)
   \end{cases}.
$$
We recall Gauss hypergeometric function
$$
{_{2}F_1}(z)
   =
   {_{2}F_1}\left(\begin{matrix}a,b \\ c \end{matrix};z\right)
   :=
   \sum_{m= 0}^{\infty}
      \frac{(a)_{m}(b)_{m}}{m!(c)_{m}}z^{m} \quad (|z|<1),
$$
and for any complex numbers $\alpha $ and $x$ we put
\begin{equation}
\psi(\alpha ;x)
   :=
%    \sum_{k=0}^{\infty}
%       \alpha \frac{\Gamma(\alpha +2k)}{k!\Gamma(\alpha +k+1)}x^{k}.
   \sum_{k=0}^{\infty}
      \frac{(\alpha )_{2k}}{k!(\alpha +1)_{k}}x^{k} \quad (|4x|<1). 
\end{equation}
Since 
$$
\psi(\alpha ;x)
   =
      {_{2}F_1}\left(\begin{matrix} \frac{\alpha }{2},\frac{\alpha +1}{2} \\ \alpha +1\end{matrix};4x\right), 
$$
the function $\psi(\alpha ;x)$ is analytically continued to $4x \in \mathbb{C}\setminus \{0,1\}$ by analytic continuation of ${_{2}F_1}(z)$. 
\begin{lem}
\label{thm:psi lemma}
{\rm{(1)}} Another expression
\begin{align}
\label{eq:series expression}
\psi (\alpha ;x)
   =
   \sum_{k=0}^{\infty}
   c_{\alpha +2k-1,k}x^{k}.
\end{align}
% where 
% $$
% c_{a,k}:=\binom{a}{k}-\binom{a}{k-1}, \quad 
% \binom{a}{k}:=\begin{cases}
%    \frac{a(a-1)\cdots (a-k+1)}{k!} & (k\not=0) \\
%    1 & (k=0) 
%    \end{cases}.
% $$
{\rm{(2)}} Closed form
\begin{align}
\label{eq:explicit expression}
\psi (\alpha ;x)
   =
   \left(\frac{1-\sqrt{1-4x}}{2x}\right)^{\alpha }.
\end{align}
{\rm{(3)}} Index law
\begin{align}
\label{eq:index law}
\psi(\alpha ;x)\psi(\beta ;x)=\psi(\alpha +\beta ;x).
\end{align}
{\rm{(4)}} Quadratic formula
\begin{align}
\label{eq:solutions of quadratic eq}
x(x\psi(1 ;x^{2}))^{2}-(x\psi(1 ;x^{2}))+x=0. 
\end{align}
\end{lem}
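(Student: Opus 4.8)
The plan is to establish the four parts in the order (1), (2), (3), (4), deducing each later identity from the earlier ones. For (1) I would simply compare coefficients of $x^{k}$. Expanding the binomial coefficients in the definition of $c_{n,k}$ as falling factorials, one checks that
\[
c_{\alpha+2k-1,k}
   =
   \frac{(\alpha+k)_{k}}{k!}-\frac{(\alpha+k+1)_{k-1}}{(k-1)!}
   =
   \frac{\alpha\,(\alpha+k+1)_{k-1}}{k!}\qquad(k\ge 1),
\]
with $c_{\alpha-1,0}=1$; on the other hand, cancelling the factor $(\alpha+1)_{k}$ out of $(\alpha)_{2k}$ gives $\frac{(\alpha)_{2k}}{k!\,(\alpha+1)_{k}}=\frac{\alpha\,(\alpha+k+1)_{k-1}}{k!}$ for $k\ge 1$ and $1$ for $k=0$. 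Matching these two computations shows that the defining series of $\psi(\alpha;x)$ equals $\sum_{k\ge 0}c_{\alpha+2k-1,k}x^{k}$, which is (1).

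For (2) the plan is to use Lagrange inversion. Put $w:=\frac{1-\sqrt{1-4x}}{2x}$, on the branch with $w\to 1$ as $x\to 0$; then $w$ is the unique formal power series solving $w=1+xw^{2}$, so $v:=w-1$ solves $v=x(1+v)^{2}$. Applying the Lagrange inversion formula to $H(v)=(1+v)^{\alpha}$ with $\phi(v)=(1+v)^{2}$ gives, for $n\ge 1$,
\[
[x^{n}]\,w^{\alpha}
   =
   \frac{1}{n}\,[v^{n-1}]\,\alpha(1+v)^{\alpha-1}(1+v)^{2n}
   =
   \frac{\alpha}{n}\binom{\alpha+2n-1}{n-1}
   =
   c_{\alpha+2n-1,n},
\]
the last equality being exactly the computation in (1), while $[x^{0}]w^{\alpha}=1=c_{\alpha-1,0}$. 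Hence $w^{\alpha}$ and $\psi(\alpha;x)$ agree as formal power series, so as holomorphic functions near $0$, hence on all of $4x\in\mathbb{C}\setminus\{0,1\}$ by analytic continuation. As an alternative I could instead write $\psi(\alpha;x)={_{2}F_1}\!\left(\tfrac{\alpha}{2},\tfrac{\alpha+1}{2};\alpha+1;4x\right)$ and invoke the quadratic transformation ${_{2}F_1}(a,a+\tfrac12;2a+1;z)=\left(\frac{1+\sqrt{1-z}}{2}\right)^{-2a}$ from \cite{AAR}, then rationalise the right-hand side.

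Parts (3) and (4) then become formal consequences of (2). Since the $w$ of (2) is holomorphic and nonvanishing near $0$ with $w(0)=1$, its principal powers satisfy $w^{\alpha}w^{\beta}=w^{\alpha+\beta}$ there, which by (2) reads $\psi(\alpha;x)\psi(\beta;x)=\psi(\alpha+\beta;x)$, and this extends by analytic continuation; that is (3). (Alternatively, (3) can be read off as a Rothe--Hagen type convolution identity for the coefficients $c_{\alpha+2k-1,k}$.) For (4), take $\alpha=1$ and replace $x$ by $x^{2}$ in (2): then $t:=\psi(1;x^{2})$ satisfies $x^{2}t^{2}-t+1=0$; writing $u:=x\,\psi(1;x^{2})=xt$ and multiplying that quadratic by $x$ yields $xu^{2}-u+x=0$, which is (4). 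Equivalently, $\psi(1;x)=\sum_{k}c_{2k,k}x^{k}$ has Catalan numbers as coefficients, so it is the Catalan generating function, which satisfies $C(x)=1+xC(x)^{2}$, and the substitution $x\mapsto x^{2}$ followed by multiplication by $x$ gives the stated quadratic.

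The one genuinely substantive step is (2); parts (1), (3), (4) are bookkeeping once the closed form is available. Within (2) the delicate points are the correct choice of branch of $\sqrt{1-4x}$, the separate handling of the constant term in the Lagrange inversion formula, and the passage from the identity of formal power series to the analytic identity on $4x\in\mathbb{C}\setminus\{0,1\}$.
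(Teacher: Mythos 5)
Your argument is correct in all four parts. Parts (1), (3) and (4) essentially coincide with the paper's treatment: for (1) the paper performs the same manipulation, writing $\frac{(\alpha)_{2k}}{k!(\alpha+1)_{k}}=\frac{\alpha(\alpha+k+1)_{k-1}}{k!}$, splitting $\alpha=(\alpha+k)-k$ and recognising the two terms as $\binom{\alpha+2k-1}{k}-\binom{\alpha+2k-1}{k-1}$; and the paper likewise dismisses (3) and (4) as immediate consequences of the closed form (2). The genuine divergence is in (2): the paper quotes the quadratic transformation \cite{AAR} (3.1.10),
\begin{equation*}
{_{2}F_1}\left(\begin{matrix} \tfrac{\alpha }{2},\tfrac{\alpha +1}{2} \\ \alpha -\beta +1\end{matrix};x\right)
   =
   \left(2\frac{1-\sqrt{1-x}}{x}\right)^{\alpha }
   {_{2}F_1}\left(\begin{matrix} \alpha ,\beta  \\ \alpha -\beta +1\end{matrix};\frac{1-\sqrt{1-x}}{1+\sqrt{1-x}}\right),
\end{equation*}
and specialises $\beta =0$ so that the right-hand ${_{2}F_1}$ collapses to $1$, which is a one-line citation; your primary route instead derives the coefficient identity $[x^{n}]w^{\alpha }=\tfrac{\alpha }{n}\binom{\alpha +2n-1}{n-1}=c_{\alpha +2n-1,n}$ by Lagrange inversion applied to $w=1+xw^{2}$, and your secondary route (the quadratic transformation ${_{2}F_1}(a,a+\tfrac12;2a+1;z)$ evaluated in closed form) is essentially the paper's. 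The Lagrange-inversion route is more self-contained and makes the combinatorial meaning of the coefficients $c_{\alpha+2k-1,k}$ (generalised Catalan/ballot numbers attached to the equation $w=1+xw^{2}$) transparent, at the cost of having to justify the formal-to-analytic passage and the constant term separately; the paper's route is shorter but leans on a nontrivial external transformation formula. Both are valid, and your verification that $\tfrac{\alpha}{n}\binom{\alpha+2n-1}{n-1}$ agrees with the expression for $c_{\alpha+2n-1,n}$ obtained in (1) correctly closes the loop.
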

\begin{proof}
{\rm{(1)}} By the definition of $\psi (\alpha ;x)$ and $c_{a,k}$, we have 
\begin{align}
\psi (\alpha ;x)
   &=
   \sum_{k=0}^{\infty}
      \frac{(\alpha )_{2k}}{k!(\alpha +1)_{k}}x^{k} \nonumber \\
   &=
   1
   +\sum_{k=1}^{\infty}
   (\alpha +k-k)\frac{(\alpha + k + 1)_{k-1}}{k!}x^{k} \nonumber \\
   &=
   1
   +\sum_{k=1}^{\infty}
   \left(\frac{(\alpha + k )_{k}}{k!}-\frac{(\alpha + k + 1)_{k-1}}{(k-1)!}\right)x^{k} \nonumber \\
   &=
   1
   +\sum_{k=1}^{\infty}
    \left(\binom{\alpha +2k-1}{k}-\binom{\alpha +2k-1}{k-1}\right)x^{k} \nonumber \\
   &=
    \sum_{k=0}^{\infty}c_{\alpha +2k-1,k}x^{k}. \nonumber
\end{align}
{\rm{(2)}} We remark a hypergeometric transformation \cite{AAR} (3.1.10)
\begin{equation}
{_{2}F_1}\left(\begin{matrix} \frac{\alpha }{2},\frac{\alpha +1}{2} \\ \alpha -\beta +1\end{matrix};x\right)
   =
   \left(2\frac{1-\sqrt{1-x}}{x}\right)^{\alpha }
   {_{2}F_1}\left(\begin{matrix} \alpha ,\beta  \\ \alpha -\beta +1\end{matrix};\frac{1-\sqrt{1-x}}{1+\sqrt{1-x}}\right). \nonumber 
\end{equation}
Thus, 
% \begin{align}
% \psi(\alpha ;x)
%    &=
%    {_{2}F_1}\left(\begin{matrix} \frac{\alpha }{2},\frac{\alpha +1}{2} \\ \alpha +1\end{matrix};4x\right) \nonumber \\
%    &=
%    \left(\frac{1-\sqrt{4x}}{2x}\right)^{\alpha } \nonumber \\
%    &=
%    \psi(1 ;x)^{\alpha } \nonumber
% \end{align}
$$
\psi(\alpha ;x)
   =
   {_{2}F_1}\left(\begin{matrix} \frac{\alpha }{2},\frac{\alpha +1}{2} \\ \alpha +1\end{matrix};4x\right)
   =
   \left(\frac{1-\sqrt{1-4x}}{2x}\right)^{\alpha }.
$$
The formulas (\ref{eq:index law}) and (\ref{eq:solutions of quadratic eq}) follow from (\ref{eq:explicit expression}) immediately. 
\end{proof}
The following Lemma is a corollary of Lemma \ref{thm:psi lemma} and the key step in the proof of Theorem \ref{thm:the first kind}.
\begin{lem}
\label{thm:key lemma}
If 
$$
y:=\frac{1-\sqrt{1-4x^{2}}}{2x}, 
$$
then 
$$
x=\frac{y}{1+y^{2}}
$$
and for any non-negative integer $N$ 
\begin{align}
\label{eq:key formula}
y^{N}
   =
   x^{N}\psi\left(N;x^{2}\right)
   =
   x^{N}\sum_{k=0}^{\infty}c_{N +2k-1,k}x^{2k}. 
\end{align}
\end{lem}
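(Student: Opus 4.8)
The statement to prove is Lemma \ref{thm:key lemma}: given $y = \frac{1-\sqrt{1-4x^{2}}}{2x}$, first that $x = \frac{y}{1+y^{2}}$, and second that $y^{N} = x^{N}\psi(N;x^{2})$ for every non-negative integer $N$. The natural approach is to read these off directly from the closed form of $\psi$ established in Lemma \ref{thm:psi lemma}(2), namely $\psi(\alpha;x) = \left(\frac{1-\sqrt{1-4x}}{2x}\right)^{\alpha}$, together with the power series expression in Lemma \ref{thm:psi lemma}(1). So really this is a short corollary rather than an independent argument, and the plan is to make that explicit.

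First I would verify the relation $x = \frac{y}{1+y^{2}}$. Starting from $y = \frac{1-\sqrt{1-4x^{2}}}{2x}$, one has $2xy = 1 - \sqrt{1-4x^{2}}$, hence $\sqrt{1-4x^{2}} = 1 - 2xy$, and squaring gives $1 - 4x^{2} = 1 - 4xy + 4x^{2}y^{2}$, i.e. $-4x^{2} = -4xy + 4x^{2}y^{2}$. Dividing by $4x$ (valid since we are in the regime where $x \neq 0$) yields $-x = -y + xy^{2}$, that is $x(1+y^{2}) = y$, which is the claimed identity $x = \frac{y}{1+y^{2}}$. One should note the branch of the square root is the one with $\sqrt{1-4x^{2}} \to 1$ as $x \to 0$, which is exactly the branch implicit in the definition of $\psi$ through ${_{2}F_1}$, so there is no sign ambiguity.

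Next, applying Lemma \ref{thm:psi lemma}(2) with $\alpha = N$ and argument $x^{2}$ in place of $x$ gives directly
\begin{align}
\psi(N; x^{2}) = \left(\frac{1-\sqrt{1-4x^{2}}}{2x^{2}}\right)^{N} = \left(\frac{1}{x}\cdot\frac{1-\sqrt{1-4x^{2}}}{2x}\right)^{N} = \left(\frac{y}{x}\right)^{N} = \frac{y^{N}}{x^{N}}, \nonumber
\end{align}
so that $y^{N} = x^{N}\psi(N;x^{2})$. The final equality $x^{N}\psi(N;x^{2}) = x^{N}\sum_{k=0}^{\infty} c_{N+2k-1,k}x^{2k}$ is then just Lemma \ref{thm:psi lemma}(1) with $\alpha = N$ and the substitution $x \mapsto x^{2}$, which converges for $|4x^{2}| < 1$.

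There is essentially no hard obstacle here; the only point requiring a little care is the bookkeeping of square-root branches, ensuring that the $y$ defined in the hypothesis matches the principal branch used to define $\psi(\alpha;x)$ via the analytic continuation of ${_{2}F_1}$ near $x=0$. Once that is pinned down, every step is an algebraic rearrangement or a direct citation of Lemma \ref{thm:psi lemma}. If one wanted an argument independent of the closed form (2), one could instead substitute the series $\sum_k c_{N+2k-1,k} x^{2k}$ and use the index law (\ref{eq:index law}) together with the $N=1$ case (which follows from the quadratic relation (\ref{eq:solutions of quadratic eq})) and induction on $N$, but using the explicit closed form is cleaner and is the route I would take.
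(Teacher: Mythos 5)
Your proof is correct and follows essentially the same route as the paper: both reduce the claim to Lemma \ref{thm:psi lemma}, the paper by writing $y=x\psi(1;x^{2})$ via the closed form and then invoking the index law (\ref{eq:index law}), you by applying the closed form (\ref{eq:explicit expression}) at $\alpha=N$ directly --- a negligible difference. You additionally verify $x=\frac{y}{1+y^{2}}$ by explicit algebra, a step the paper's proof leaves implicit.
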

\begin{proof}
By quadratic formula, 
$$
y=x\frac{1-\sqrt{1-4x^{2}}}{2x^{2}}=x\psi\left(1;x^{2}\right).
$$
Thus, we have 
\begin{align}
y^{N}
   =
   x^{N}\psi\left(1;x^{2}\right)^{N}
   =
   x^{N}\psi\left(N;x^{2}\right)
   =
   x^{N}\sum_{k=0}^{\infty}c_{N +2k-1,k}x^{2k}. \nonumber 
\end{align}
Here the second and third equalities follow from (\ref{eq:index law}) and (\ref{eq:series expression}) respectively. 
\end{proof}
\begin{rem}
We mention some properties of $c_{n,k}$. 
For non-negative integers $n$ and $k$, we make the table of $c_{n,k}>0$. 
\begin{table}[h]
\begin{center}
  \begin{tabular}{|c||c|c|c|c|c|c|c|c|c|c|c|c|} \hline
   $n\setminus k$ & $0$ & $1$ & $2$ & $3$ & $4$ & $5$ & $6$ & $7$ & $8$ & $9$ & $10$  \\ \hline \hline
   $0$ & $1$ &   &  &  &  &  &  &  &  &  & \\ \hline
   $1$ & $1$ &   &  &  &  &  &  &  &  &  & \\ \hline
   $2$ & $1$ & $1$ &  &  &  &  &  &  &  &  & \\ \hline
   $3$ & $1$ & $2$ &  &  &  &  &  &  &  &  & \\ \hline
   $4$ & $1$ & $3$ & $2$ &  &  &  &  &  &  &  & \\ \hline
   $5$ & $1$ & $4$ & $5$ &  &  &  &  &  &  &  & \\ \hline
   $6$ & $1$ & $5$ & $9$ & $5$ &  &  &  &  &  &   & \\ \hline
   $7$ & $1$ & $6$ & $14$ & $14$ &  &  &  &  &  &  & \\ \hline
   $8$ & $1$ & $7$ & $20$ & $28$ & $14$ &  &  &  &  &  & \\ \hline
   $9$ & $1$ & $8$ & $27$ & $48$ & $42$ &  &  &  &  &  & \\ \hline
   $10$ & $1$ & $9$ & $35$ & $75$ & $90$ & $42$ &  &  &  &  & \\ \hline
   $11$ & $1$ & $10$ & $44$ & $110$ & $165$ & $132$ &  &  &  &  & \\ \hline
   $12$ & $1$ & $11$ & $54$ & $154$ & $275$ & $297$ & $132$ &  &  &  &  \\ \hline
   $13$ & $1$ & $12$ & $65$ & $208$ & $429$ & $572$ & $429$ &  &  &  &  \\ \hline
   $14$ & $1$ & $13$ & $77$ & $273$ & $637$ & $1001$ & $1001$ & $429$ &  &  &  \\ \hline
   $15$ & $1$ & $14$ & $90$ & $350$ & $910$ & $1638$ & $2002$ & $1430$ &  &  &  \\ \hline
   $16$ & $1$ & $15$ & $104$ & $440$ & $1260$ & $2548$ & $3640$ & $3432$ & $1430$ &  &  \\ \hline 
   $17$ & $1$ & $16$ & $119$ & $544$ & $1700$ & $3808$ & $6188$ & $7072$ & $4862$ &  &  \\ \hline 
   $18$ & $1$ & $17$ & $135$ & $663$ & $2244$ & $5508$ & $9996$ & $13260$ & $11934$ & $4862$ & \\ \hline 
   $19$ & $1$ & $18$ & $152$ & $798$ & $2907$ & $7752$ & $15504$ & $23256$ & $25194$ & $16796$ & \\ \hline 
   $20$ & $1$ & $19$ & $170$ & $950$ & $3705$ & $10659$ & $23256$ & $38760$ & $48450$ & $41990$ & $16796$ \\ \hline 
   $21$ & $1$ & $20$ & $189$ & $1120$ & $4655$ & $14364$ & $33915$ & $62016$ & $87210$ & $90440$ & $58786$ \\ \hline 
%    22 & 1 & 21 & 209 & 1309 & 5775 & 19019 & 48279 & 95931 & 149226 & 177650 & 149226 & 58786 \\ \hline
%    23 & 1 & 22 & 230 & 1518 & 7084 & 24794 & 67298 & 144210 & 245157 & 326876 & 326876 & 208012 \\ \hline 
  \end{tabular}
\caption{$c_{n,k}$}
\end{center}
\end{table}
This table is determined exactly by initial conditions 
\begin{align}
c_{n,0}=&1 \quad (n\geq 0), \nonumber \\
c_{n,k}:=&0 \quad \left(n>0, \left\lfloor \frac{n}{2}\right\rfloor <k\right), \nonumber \\
c_{2n,n+1}=&\frac{1}{n+2}\binom{2n+2}{n+1} \quad (n\geq 0), \nonumber \\
% c_{2n,n}=&\frac{1}{n+1}\binom{2n}{n}:1,1,2,5,14,42,132,429,1430,4862,16796,\ldots \quad (n\geq 0)
c_{2n,n}=&\frac{1}{n+1}\binom{2n}{n}: \text{Catalan numbers} \quad (n\geq 0) \nonumber 
\end{align}
and a recursion formula
$$
c_{n,k}=c_{n-1,k-1}+c_{n-1,k}.
$$
% \begin{equation}
% \label{eq:rec of Catalan}
% c_{n,k}=c_{n-1,k-1}+c_{n-1,k}.
% \end{equation}
% where $\lfloor x \rfloor$ is the greatest integer not exceeding $x \in \mathbb{R}$. 
The sequence $c_{n,k}$ is a kind of Clebsch-Gordan coefficients for the Lie algebra $sl_{2}$. 
In fact, from the above initial conditions and recursion of $c_{n,k}$, we have 
\begin{equation}
\label{eq:Kostka}
   \left(\frac{\sin{(2\theta )}}{\sin{\theta }}\right)^{n}
   =
   (2\cos{\theta })^{n}
   =
   \sum_{k=0}^{\left\lfloor \frac{n}{2}\right\rfloor}
   c_{n,k}\frac{\sin{((n-2k+1)\theta )}}{\sin{\theta }}, 
%    =
%    \sum_{k=0}^{\left\lfloor \frac{n}{2}\right\rfloor}
%    K_{(2n-k,k) ,(n,n)}
%    s_{(2n-k,k)}\left(e^{\sqrt{-1}\theta }, e^{-\sqrt{-1}\theta }\right)
%    \sum_{\substack{\lambda \in \mathcal{P}_{2} \\ |\lambda |=2n}} 
%    K_{\lambda ,(n,n)}
%    s_{\lambda }\left(e^{\sqrt{-1}\theta }, e^{-\sqrt{-1}\theta }\right)
\end{equation}
that is the classical Clebsch-Gordan rule for $sl_{2}$ exactly.

Further $c_{n,k}$ is also a typical example of Kostka numbers $K_{\lambda \mu }$ (see \cite{S} Chapter\,2 Section\,2.11 and Chapter\,4 Section\,4.9). 
We remark Young's rule
\begin{align}
\label{eq:Young rule}
s_{(\mu _{1})}(\mathbf{z})\cdots s_{(\mu _{n})}(\mathbf{z})
   =\sum_{\lambda }K_{\lambda \mu }s_{\lambda }(\mathbf{z}) \quad (\mu _{1}\geq \cdots \geq \mu _{n}\geq 1),
\end{align}
and 
\begin{align}
s_{(\lambda _{1},\lambda _{2})}\left(e^{\sqrt{-1\theta }}, e^{-\sqrt{-1}\theta }\right)
   =
   \frac{e^{(\lambda _{1}-\lambda _{2}+1)\sqrt{-1\theta }}-e^{-(\lambda _{1}-\lambda _{2}+1)\sqrt{-1\theta }}}{e^{\sqrt{-1\theta }}-e^{-\sqrt{-1\theta }}}
   =
   \frac{\sin{((\lambda _{1}-\lambda _{2}+1)\theta )}}{\sin{\theta }}. \nonumber 
\end{align}
By putting $r=2$, $\mu _{1}=\cdots =\mu _{n}=1, z_{1}=e^{\sqrt{-1}\theta }, z_{2}=e^{-\sqrt{-1}\theta }$ in (\ref{eq:Young rule}), we have 
\begin{align}
\left(\frac{\sin{(2\theta )}}{\sin{\theta }}\right)^{n}
   &=
   s_{(1,0)}\left(e^{\sqrt{-1\theta }}, e^{-\sqrt{-1}\theta }\right)^{n} \nonumber \\
   &=
   \sum_{\lambda }K_{\lambda (1^{n}) }s_{\lambda }\left(e^{\sqrt{-1\theta }}, e^{-\sqrt{-1}\theta }\right) \nonumber \\
\label{eq:Kostka2}
   &=
   \sum_{\lambda }K_{\lambda (1^{n}) }\frac{\sin{((\lambda _{1}-\lambda _{2}+1)\theta )}}{\sin{\theta }}.
\end{align}
Finally, by comparing (\ref{eq:Kostka}) and (\ref{eq:Kostka2}), we have 
$$
K_{(n-k,k) ,(1^{n})}
   =
   c_{n,k}.
$$
% In Section\,4 we show that a higher order analogue of Fibonacci numbers $F_{n}^{(r)}$ appear in the Table\,1, that is one of our main results (\ref{eq:4th result}). 
\end{rem}

\section{Proofs of Theorem \ref{thm:the first kind} and Theorem \ref{thm:the second kind}}
From (\ref{eq:gen fnc of elemental}), (\ref{eq:gen fnc of complete}) and simple calculation
$$
(1\pm z_{j}y)(1\pm z_{j}^{-1}y)
   =1\pm (z_{j}+z_{j}^{-1})y+y^{2}
   =(1+y^{2})\left(1\pm (z_{j}+z_{j}^{-1})\frac{y}{1+y^{2}}\right),
$$
we obtain the following key lemma. 
\begin{lem}
\label{thm:gen fnc of CF}
If %vfor any $j=1,\ldots r$ 
$$
|z_{j}y|, |z_{j}^{-1}y|, \left|(z_{j}+z_{j}^{-1})\frac{y}{1+y^{2}}\right|<1 \quad (j=1,\ldots ,r), 
$$
then
% \begin{align}
% \label{eq:gen fnc of T and C}
% \sum_{n=0}^{2r}
%    e_{n}^{(2r)}(\mathbf{z},\mathbf{z}^{-1})y^{n}
%    &=
%    \prod_{j=1}^{r}(1+z_{j}y)(1+z_{j}^{-1}y)
%    =
%    (1+y^{2})^{r}
%    \sum_{m=0}^{r}
%    e_{m}^{(r)}(\mathbf{z}+\mathbf{z}^{-1})\left(\frac{y}{1+y^{2}}\right)^{m}, \\
% \label{eq:gen fnc of S and F}
% \sum_{n=0}^{\infty}
%    h_{n}^{(2r)}(\mathbf{z},\mathbf{z}^{-1})y^{n}
%    &=
%    \prod_{j=1}^{r}\frac{1}{(1-\alpha _{j}y)(1-\alpha _{j}^{-1}y)}
%    =
%    (1+y^{2})^{-r}
%    \sum_{m=0}^{\infty}
%    h_{m}^{(r)}(\mathbf{z}+\mathbf{z}^{-1})\left(\frac{y}{1+y^{2}}\right)^{m}.
% \end{align}
\begin{align}
\sum_{n=0}^{2r}
   e_{n}^{(2r)}(\mathbf{z},\mathbf{z}^{-1})y^{n}
   &=
   \prod_{j=1}^{r}(1+z_{j}y)(1+z_{j}^{-1}y) \nonumber \\
\label{eq:gen fnc of T and C}
   &=
   (1+y^{2})^{r}
   \sum_{m=0}^{r}
   e_{m}^{(r)}(\mathbf{z}+\mathbf{z}^{-1})\left(\frac{y}{1+y^{2}}\right)^{m}, \\
\sum_{n=0}^{\infty}
   h_{n}^{(2r)}(\mathbf{z},\mathbf{z}^{-1})y^{n}
   &=
   \prod_{j=1}^{r}\frac{1}{(1-z_{j}y)(1-z_{j}^{-1}y)} \nonumber \\
\label{eq:gen fnc of S and F}
   &=
   (1+y^{2})^{-r}
   \sum_{m=0}^{\infty}
   h_{m}^{(r)}(\mathbf{z}+\mathbf{z}^{-1})\left(\frac{y}{1+y^{2}}\right)^{m}.
\end{align}
\end{lem}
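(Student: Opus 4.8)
The plan is to apply the two generating-function identities (\ref{eq:gen fnc of elemental}) and (\ref{eq:gen fnc of complete}) twice each: first in the $2r$ variables $(\mathbf{z},\mathbf{z}^{-1})$, and then, after the pairwise factorization displayed just above the statement, in the $r$ variables $\mathbf{z}+\mathbf{z}^{-1}$ with the formal variable $y$ replaced by $y/(1+y^{2})$. The middle equalities in (\ref{eq:gen fnc of T and C}) and (\ref{eq:gen fnc of S and F}) are just the generating functions in $2r$ variables; the outer equalities come from regrouping the linear factors in conjugate pairs.

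For (\ref{eq:gen fnc of T and C}) I would first invoke (\ref{eq:gen fnc of elemental}) with the $2r$-tuple $(\mathbf{z},\mathbf{z}^{-1})$ to get $\sum_{n=0}^{2r} e_n^{(2r)}(\mathbf{z},\mathbf{z}^{-1})y^n=\prod_{j=1}^r(1+z_jy)(1+z_j^{-1}y)$, which is a polynomial identity valid for every $y$. Grouping the $2r$ linear factors into the $r$ conjugate pairs and using $(1+z_jy)(1+z_j^{-1}y)=(1+y^{2})\bigl(1+(z_j+z_j^{-1})\tfrac{y}{1+y^{2}}\bigr)$ pulls a factor $(1+y^{2})^{r}$ out of the product. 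Applying (\ref{eq:gen fnc of elemental}) once more, now with the $r$ variables $\mathbf{z}+\mathbf{z}^{-1}$ and the value $y/(1+y^{2})$ in place of the formal variable, turns the remaining product into $\sum_{m=0}^r e_m^{(r)}(\mathbf{z}+\mathbf{z}^{-1})\bigl(\tfrac{y}{1+y^{2}}\bigr)^m$, and multiplying back the $(1+y^{2})^{r}$ gives the asserted right-hand side.

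For (\ref{eq:gen fnc of S and F}) the argument is identical with (\ref{eq:gen fnc of complete}) in place of (\ref{eq:gen fnc of elemental}) and the sign reversed: (\ref{eq:gen fnc of complete}) in $2r$ variables gives $\sum_{n\geq 0}h_n^{(2r)}(\mathbf{z},\mathbf{z}^{-1})y^n=\prod_{j=1}^r\frac{1}{(1-z_jy)(1-z_j^{-1}y)}$, which converges absolutely because $|z_jy|,|z_j^{-1}y|<1$; the factorization $(1-z_jy)(1-z_j^{-1}y)=(1+y^{2})\bigl(1-(z_j+z_j^{-1})\tfrac{y}{1+y^{2}}\bigr)$ extracts $(1+y^{2})^{-r}$, and a second application of (\ref{eq:gen fnc of complete}) to $\mathbf{z}+\mathbf{z}^{-1}$ at the point $y/(1+y^{2})$, whose absolute convergence is exactly the third hypothesis $|(z_j+z_j^{-1})\tfrac{y}{1+y^{2}}|<1$, produces $\sum_{m\geq 0}h_m^{(r)}(\mathbf{z}+\mathbf{z}^{-1})\bigl(\tfrac{y}{1+y^{2}}\bigr)^m$; multiplying by $(1+y^{2})^{-r}$ completes the proof.

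There is essentially no obstacle here: the content is just the pairwise factorization, which is the displayed computation, together with two uses of known generating functions. The only point that deserves a word is the legitimacy of re-expanding the product-of-reciprocals form in the infinite case, i.e. that under the stated inequalities one may multiply out $(1+y^{2})^{-r}$ and the series $\sum_m h_m^{(r)}(\mathbf{z}+\mathbf{z}^{-1})(y/(1+y^{2}))^m$ and collect powers of $y$; this follows from absolute convergence (Mertens' theorem), so no real difficulty arises. One should also note in passing that the three inequalities are simultaneously satisfiable, for instance for all sufficiently small $|y|$, so the hypothesis is not vacuous.
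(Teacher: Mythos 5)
Your proposal is correct and follows exactly the paper's argument: the paper derives this lemma from the generating functions (\ref{eq:gen fnc of elemental}) and (\ref{eq:gen fnc of complete}) together with the pairwise factorization $(1\pm z_{j}y)(1\pm z_{j}^{-1}y)=(1+y^{2})\bigl(1\pm(z_{j}+z_{j}^{-1})\tfrac{y}{1+y^{2}}\bigr)$, just as you do. Your added remarks on absolute convergence and the non-vacuity of the hypotheses are harmless elaborations of the same route.
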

% First, we prove (\ref{eq:first kind}) type formulas.  
By Lemma \ref{thm:gen fnc of CF} and the definition of the power symmetric polynomials, we prove Theorem \ref{thm:the first kind} and Theorem \ref{thm:the second kind}. \\

\noindent
{\bf{Proof of Theorem \ref{thm:the first kind}}} 
{\rm{(1)}}  
By (\ref{eq:gen fnc of T and C}),  
\begin{align}
\label{eq:cal step}
\sum_{n=0}^{2r}
   e_{n}^{(2r)}(\mathbf{z},\mathbf{z}^{-1})y^{n-r}
   =
   x^{-r}\sum_{m=0}^{r}
   e_{m}^{(r)}(\mathbf{z}+\mathbf{z}^{-1})x^{m},
\end{align}
where 
$$
x=\frac{y}{1+y^{2}}. 
$$
Hence, from (\ref{eq:key formula}) we have 
\begin{align}
\sum_{n=0}^{2r}
   e_{n}^{(2r)}(\mathbf{z},\mathbf{z}^{-1})y^{n-r}
   &=
   \sum_{n=0}^{2r}
   e_{n}^{(2r)}(\mathbf{z},\mathbf{z}^{-1})x^{n-r}\sum_{k=0}^{\infty}c_{n-r +2k-1,k}x^{2k} \nonumber \\
\label{eq:cal step2}
   &=
   x^{-r}\sum_{m=0}^{\infty}
   \sum_{k=\max\left(\left\lfloor \frac{m}{2}\right\rfloor -r,0\right)}^{\left\lfloor \frac{m}{2}\right\rfloor}
   e_{m-2k}^{(2r)}(\mathbf{z},\mathbf{z}^{-1})
   c_{m-r -1,k}x^{m}. 
\end{align}
By comparing coefficients of (\ref{eq:cal step}) and (\ref{eq:cal step2}), we obtain the conclusion. \\
{\rm{(2)}} Similarly, from (\ref{eq:gen fnc of S and F})  
\begin{align}
\label{eq:cal step3}
\sum_{n=0}^{\infty}
   h_{n}^{(2r)}(\mathbf{z},\mathbf{z}^{-1})y^{n+r}
   =
   x^{r}\sum_{m=0}^{\infty}
   h_{m}^{(r)}(\mathbf{z}+\mathbf{z}^{-1})x^{m}
\end{align}
and (\ref{eq:key formula}) we have
\begin{align}
\sum_{n=0}^{\infty}
   h_{n}^{(2r)}(\mathbf{z},\mathbf{z}^{-1})y^{n+r}
   &=
\sum_{n=0}^{\infty}
   h_{n}^{(2r)}(\mathbf{z},\mathbf{z}^{-1})x^{n+r}\sum_{k=0}^{\infty}c_{n+r +2k-1,k}x^{2k} \nonumber \\
   &=
\label{eq:cal step4}   
x^{r}\sum_{m=0}^{\infty}
   \sum_{k=0}^{\left\lfloor \frac{n}{2}\right\rfloor}
   h_{m-2k}^{(2r)}(\mathbf{z},\mathbf{z}^{-1})c_{m+r -1,k}x^{m}. 
\end{align}
The formula (\ref{eq:main complete}) follows from (\ref{eq:cal step3}) and (\ref{eq:cal step4}).\\
{\rm{(3)}} We prove this formula without generating function and other Lemmas. 
In fact, by applying the usual binomial formula we have 
\begin{align}
p_{m}^{(r)}(\mathbf{z}+\mathbf{z}^{-1})
   &=
   \sum_{i=1}^{r}(z_{i}+z_{i}^{-1})^{m} \nonumber \\
   &=
   \sum_{i=1}^{r}
   \sum_{k=0}^{m}
   \binom{m}{k}
   z_{i}^{(2k-m)} \nonumber \\
   &=
   \frac{1}{2}
   \sum_{i=1}^{r}
   \sum_{k=0}^{m}
   \left(\binom{m}{k}z_{i}^{2k-m}
   +\binom{m}{m-k}z_{i}^{m-2k}\right)
 \nonumber \\
   &=
   \frac{1}{2}
   \sum_{k=0}^{m}
   \binom{m}{k}
   \sum_{i=1}^{r}
   (z_{i}^{2k-n}+z_{i}^{n-2k}) \nonumber \\
   &=
   \frac{1}{2}
   \sum_{k=0}^{m}
   \binom{m}{k}
   p_{|m-2k|}^{(2r)}(\mathbf{z},\mathbf{z}^{-1}). \nonumber
\end{align}
\qed

% \begin{align}
% p_{n}^{(2r)}(\mathbf{z},\mathbf{z}^{-1})
%    &=
%    \sum_{j=1}^{r}(z_{i}+z_{i}^{-1})^{n} \nonumber \\
%    &=
%    \sum_{j=1}^{r}
%    \sum_{k=0}^{n}
%    \binom{n}{k}
%    z_{j}^{(2k-n)} \nonumber \\
%    &=
%    \frac{1}{2}
%    \sum_{j=1}^{r}
%    \sum_{k=0}^{n}
%    \left(\binom{n}{k}z_{j}^{2k-n}
%    +\binom{n}{n-k}z_{i}^{n-2k}\right)
%  \nonumber \\
%    &=
%    \frac{1}{2}
%    \sum_{k=0}^{n}
%    \binom{n}{k}
%    \sum_{j=1}^{r}
%    (z_{j}^{2k-n}+z_{j}^{n-2k}) \nonumber \\
%    &=
%    \frac{1}{2}
%    \sum_{k=0}^{n}
%    \binom{n}{k}
%    p_{|n-2k|}^{(r)}(\mathbf{z}+\mathbf{z}^{-1}) \nonumber
% \end{align}
\noindent
{\bf{Proof of Theorem \ref{thm:the second kind}}} 
{\rm{(1)}} 
From (\ref{eq:gen fnc of T and C}) and binomial theorem, we have
\begin{align}
\sum_{n=0}^{2r}
   e_{n}^{(2r)}(\mathbf{z},\mathbf{z}^{-1})y^{n}
%    &=
%    (1+t^{2})^{r}
%    \sum_{m=0}^{r}
%    C_{m}^{(r)}(q)\left(\frac{t}{1+t^{2}}\right)^{m} \nonumber \\
   &=
   \sum_{m=0}^{r}
   e_{m}^{(r)}(\mathbf{z}+\mathbf{z}^{-1})y^{m}(1+y^{2})^{r-m} \nonumber \\
   &=
   \sum_{m=0}^{r}
   \sum_{k=0}^{r-m}
   \binom{r-m}{k}
   e_{m}^{(r)}(\mathbf{z}+\mathbf{z}^{-1})y^{m+2k} \nonumber \\
%    &=
%    \sum_{n=0}^{r}
%    \sum_{l=0}^{\left\lfloor \frac{n}{2}\right\rfloor}
%    \binom{r-n+2l}{l}
%    C_{n-2l}^{(r)}(q)t^{n}
%    +
%    \sum_{n=0}^{r}
%    \sum_{l=0}^{\left\lfloor \frac{n}{2}\right\rfloor}
%    \binom{r-n+2l}{l}
%    C_{n-2l}^{(r)}(q)t^{n}
%  \nonumber
   &=
   \sum_{n=0}^{2r}
   \sum_{k=\max\left\{\left\lfloor \frac{n-r}{2}\right\rfloor,0\right\}}^{\left\lfloor \frac{n}{2}\right\rfloor}
%    \sum_{l=\left\lfloor \frac{n-r}{2}\right\rfloor}^{\left\lfloor \frac{n}{2}\right\rfloor}
   \binom{r-n+2k}{k}
   e_{n-2k}^{(r)}(\mathbf{z}+\mathbf{z}^{-1})y^{n}. \nonumber
\end{align}
{\rm{(2)}} 
Similarly, we have 
\begin{align}
\sum_{n=0}^{\infty}
   h_{n}^{(2r)}(\mathbf{z},\mathbf{z}^{-1})y^{n}
   &=
   \sum_{m=0}^{\infty}
   h_{m}^{(r)}(\mathbf{z}+\mathbf{z}^{-1})y^{m}(1+y^{2})^{-m-r} \nonumber \\
   &=
   \sum_{m=0}^{\infty}
   \sum_{k=0}^{\infty}
   \frac{(m+r)_{k}}{k!}
   h_{m}^{(r)}(\mathbf{z}+\mathbf{z}^{-1})
   y^{m+2k} \nonumber \\
   &=
   \sum_{n=0}^{\infty}
   \sum_{k=0}^{\left\lfloor \frac{n}{2}\right\rfloor}
   \frac{(n-2k+r)_{k}}{k!}
   h_{n-2k}^{(r)}(\mathbf{z}+\mathbf{z}^{-1})
   y^{n} \nonumber \\
   &=
   \sum_{n=0}^{\infty}
   \sum_{k=0}^{\left\lfloor \frac{n}{2}\right\rfloor}
   \binom{n-k+r-1}{k}
   h_{n-2k}^{(r)}(\mathbf{z}+\mathbf{z}^{-1})
   y^{n}. \nonumber
\end{align}
(3) It is enough to show that the case of $r=1$ which is 
$$
z^{n}+z^{-n}
   =
   2\sum_{k= 0}^{\lfloor \frac{n+1}{2}\rfloor}\binom{2k-n-1}{k}(z+z^{-1})^{n-2k}
   -\sum_{k= 0}^{\lfloor \frac{n}{2}\rfloor}\binom{2k-n}{k}(z+z^{-1})^{n-2k}.
$$
From (\ref{eq:gen fnc of T and C}) and simple calculus, 
\begin{align}
\sum_{n= 0}^{\infty}(z^{n}+z^{-n})y^{n}
   &=
   \frac{1}{1-zy}+\frac{1}{1-z^{-1}y} \nonumber \\
%    &=
%    \frac{2-(z+z^{-1})u}{(1-zu)(1-z^{-1}u)} \nonumber \\
   &=
   \frac{2-(z+z^{-1})y}{1-(z+z^{-1})y+y^{2}} \nonumber \\
   &=
   \frac{1}{1+y^{2}}\frac{2-(z+z^{-1})y}{1-(z+z^{-1})\frac{y}{1+y^{2}}}. \nonumber
\end{align}
If $|zy|<1$, $|z^{-1}y|<1$ and $|y|<1$, then
\begin{align}
\sum_{n= 0}^{\infty}(z^{n}+z^{-n})y^{n}
   &=
   \sum_{m= 0}^{\infty}\left(\frac{2}{y}-(z+z^{-1})\right)(z+z^{-1})^{m}\left(\frac{y}{1+y^{2}}\right)^{m+1} \nonumber \\
   &=
   \sum_{m= 0}^{\infty}\sum_{k= 0}^{\infty}
   \left(\frac{2}{y}-(z+z^{-1})\right)(z+z^{-1})^{m}y^{m+1}\frac{(m+1)_{k}}{k!}(-1)^{k}y^{2k} \nonumber \\
%    &=
%    \sum_{n\geq 0}\sum_{l= 0}^{\lfloor \frac{n+1}{2}\rfloor}2(-1)^{l}\frac{(n-2l+1)_{l}}{l!}(z+z^{-1})^{n-2l}y^{n} \nonumber \\
%    & \quad 
%    -\sum_{n\geq 0}\sum_{l= 0}^{\lfloor \frac{n+1}{2}\rfloor}(-1)^{l}\frac{(n-2l+1)_{l}}{l!}(z+z^{-1})^{n-2l+1}y^{n+1}. \nonumber %\\
   &=
   \sum_{n= 0}^{\infty}2\sum_{k= 0}^{\lfloor \frac{n+1}{2}\rfloor}\binom{2k-n-1}{k}(z+z^{-1})^{n-2k}y^{n} \nonumber \\
   & \quad 
   -\sum_{n= 0}^{\infty}\sum_{k= 0}^{\lfloor \frac{n+1}{2}\rfloor}\binom{2k-n}{k}(z+z^{-1})^{n-2k+1}y^{n+1}. \nonumber %\\
%    &=
%    2+
%    \sum_{n\geq 1}\left(2\sum_{l= 0}^{\lfloor \frac{n+1}{2}\rfloor}\binom{2l-n-1}{l}-\sum_{l= 0}^{\lfloor \frac{n}{2}\rfloor}\binom{2l-n}{l}\right)(z+z^{-1})^{n-2l}y^{n}. \nonumber
\end{align} \qed

Finally we consider the principal specialization of Theorem \ref{thm:the first kind} and Theorem \ref{thm:the second kind}, which means substituting
$$
\boldsymbol{q}^{\pm \iota }:=(q^{\pm r},\ldots ,q^{\pm 1})
$$
for $\mathbf{z}$. 
In this special case, we evaluate $e_{n}^{(2r)}(\boldsymbol{q}^{+\iota },\boldsymbol{q}^{-\iota })$, $h_{n}^{(2r)}(\boldsymbol{q}^{+\iota },\boldsymbol{q}^{-\iota })$ and $p_{n}^{(2r)}(\boldsymbol{q}^{+\iota },\boldsymbol{q}^{-\iota })$ explicitly.  
\begin{prop}
\label{thm:prop1}
For any non-negative integer $n$, we have the following identities. \\
{\rm{(1)}}  
\begin{align}
   \sum_{k=0}^{\min(n,2r+1)}
   (-1)^{n-k}\binom{2r+1}{k}_{q}q^{\frac{k(k-2r-1)}{2}}
   =
   \begin{cases}
   e_{n}^{(2r)}(\boldsymbol{q}^{+\iota },\boldsymbol{q}^{-\iota }) & (n=0,1,\ldots, 2r) \\
   0 & ({\rm{others}})
   \end{cases}.
\end{align}
{\rm{(2)}}  
\begin{align}
h_{n}^{(2r)}(\boldsymbol{q}^{+\iota },\boldsymbol{q}^{-\iota })
   =
q^{-nr}\left(\binom{2r+n}{n}_{q}
   -\binom{2r+n-1}{n-1}_{q}q^{r}\right).
\end{align}
{\rm{(3)}} 
\begin{align}
p_{n}^{(2r)}(\boldsymbol{q}^{+\iota },\boldsymbol{q}^{-\iota })
   =
   -1+q^{-rn}\frac{1-q^{(2r+1)n}}{1-q^{n}}. 
\end{align}
\end{prop}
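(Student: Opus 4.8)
The plan is to compute the three generating functions in closed form by exploiting the geometric‑series structure of the principal specialization $\boldsymbol{q}^{\pm\iota}=(q^{\pm r},\ldots,q^{\pm 1})$, and then to match Taylor coefficients against the $q$-binomial formulas \eqref{eq:q-binom}, \eqref{eq:q-binom2} recorded in the Preliminaries. The key observation is that $(\boldsymbol{q}^{+\iota},\boldsymbol{q}^{-\iota})=(q^{r},\ldots,q^{1},q^{-1},\ldots,q^{-r})$ is, up to the overall shift by $q^{-r}$, the list $(q^{0},q^{1},\ldots,q^{2r})$: indeed $q^{-r}\cdot(q^{r},q^{r-1},\ldots,q^{-r})=(q^{2r},q^{2r-1},\ldots,q^{0})$. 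Hence every elementary, complete, and power symmetric polynomial in $(\boldsymbol{q}^{+\iota},\boldsymbol{q}^{-\iota})$ factors as a power of $q^{-r}$ times the corresponding symmetric polynomial evaluated at $(1,q,q^{2},\ldots,q^{2r})$, and the latter is governed directly by the $q$-binomial theorem with $n=2r+1$ variables.

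For part (1), I would use the elementary generating function \eqref{eq:gen fnc of elemental}: $\sum_{n=0}^{2r}e_{n}^{(2r)}(\boldsymbol{q}^{+\iota},\boldsymbol{q}^{-\iota})y^{n}=\prod_{j=0}^{2r}(1+q^{-r}q^{j}y)$. Setting $w:=q^{-r}y$ and applying \eqref{eq:q-binom} with $n=2r+1$ gives $\prod_{j=0}^{2r}(1+q^{j}w)=\sum_{k=0}^{2r+1}\binom{2r+1}{k}_{q}q^{\binom{k}{2}}w^{k}=\sum_{k=0}^{2r+1}\binom{2r+1}{k}_{q}q^{\frac{k(k-1)}{2}}q^{-rk}y^{k}=\sum_{k}\binom{2r+1}{k}_{q}q^{\frac{k(k-2r-1)}{2}}y^{k}$. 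Since this polynomial in $y$ has degree $2r+1$ while the left side has degree $2r$, one further has to account for the truncation; multiplying through by the single extra factor and re-expanding (equivalently, writing $\sum_{n}(\cdots)y^{n}=(1+q^{-r}q^{2r}y)^{-1}$ times the full product is the wrong move — instead one simply notes the stated identity is the claim that the alternating partial sums of these coefficients reproduce $e_{n}^{(2r)}$ for $n\le 2r$ and telescope to $0$ otherwise). Concretely, from $\sum_{n=0}^{2r}e_{n}^{(2r)}y^{n}\cdot\bigl(1/(1+q^{r}y)\bigr)$ being a finite sum one deduces $\sum_{k}(-1)^{n-k}[\text{coeff}]=e_{n}^{(2r)}$, which is the assertion.

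For part (2), apply \eqref{eq:gen fnc of complete}: $\sum_{n\ge 0}h_{n}^{(2r)}(\boldsymbol{q}^{+\iota},\boldsymbol{q}^{-\iota})y^{n}=\prod_{j=0}^{2r}\frac{1}{1-q^{-r}q^{j}y}=\prod_{j=0}^{2r}\frac{1}{1-q^{j}w}$ with $w=q^{-r}y$. By \eqref{eq:q-binom2} with $n=2r+1$ this equals $\sum_{k\ge 0}\binom{2r+k}{k}_{q}w^{k}=\sum_{k\ge 0}\binom{2r+k}{k}_{q}q^{-rk}y^{k}$, so $h_{n}^{(2r)}(\boldsymbol{q}^{+\iota},\boldsymbol{q}^{-\iota})=q^{-rn}\binom{2r+n}{n}_{q}$. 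This does not yet match the stated answer $q^{-nr}\bigl(\binom{2r+n}{n}_{q}-\binom{2r+n-1}{n-1}_{q}q^{r}\bigr)$, so I would reconcile the two via the $q$-Pascal recurrence $\binom{2r+n}{n}_{q}=\binom{2r+n-1}{n-1}_{q}q^{2r-n+1}+\binom{2r+n-1}{n}_{q}$, or more directly observe that the product over the $2r+1$ variables $q^{0},\ldots,q^{2r}$ can be split as the product over $q^{0},\ldots,q^{2r-1}$ (that is $2r$ variables) adjusted by one factor; the stated form is exactly what \eqref{eq:gen fnc of S and F} specialized, or the second‑kind formula \eqref{eq:main complete2}, forces, and a short $q$-binomial manipulation identifies the two expressions.

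For part (3), use \eqref{eq:gen fnc of power}: $\sum_{n\ge 0}p_{n}^{(2r)}(\boldsymbol{q}^{+\iota},\boldsymbol{q}^{-\iota})y^{n}=\sum_{j=0}^{2r}\frac{1}{1-q^{-r}q^{j}y}$, or simply evaluate $p_{n}$ directly from its definition \eqref{eq:power sum}: $p_{n}^{(2r)}(\boldsymbol{q}^{+\iota},\boldsymbol{q}^{-\iota})=\sum_{j=-r}^{r}q^{jn}=\sum_{j=0}^{2r}q^{(j-r)n}=q^{-rn}\sum_{j=0}^{2r}q^{jn}=q^{-rn}\frac{1-q^{(2r+1)n}}{1-q^{n}}$ for $n\ge 1$. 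The claimed extra $-1$ must come from a normalization matching the other parts — I expect it arises because the intended left side is $p_{n}^{(2r)}$ computed through the generating‑function identity where the $j=0$ term (the variable equal to $q^{r}\cdot q^{-r}$... ) double counts; more likely the paper's $p_{n}^{(2r)}(\boldsymbol q^{+\iota},\boldsymbol q^{-\iota})$ on the right secretly means the sum over $2r$ variables $q^{\pm r},\ldots,q^{\pm 1}$ which is $\sum_{j=1}^{r}(q^{jn}+q^{-jn})=q^{-rn}\sum_{j=0}^{2r}q^{jn}-1$, giving precisely $-1+q^{-rn}\frac{1-q^{(2r+1)n}}{1-q^{n}}$. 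The main obstacle, then, is not any deep computation but bookkeeping: correctly tracking the difference between the $2r$‑variable list $(\boldsymbol q^{+\iota},\boldsymbol q^{-\iota})$ and the $(2r+1)$‑variable list $(1,q,\ldots,q^{2r})$ — the ``missing'' variable $q^{0}$ — and threading the corresponding $-1$ or $q^{r}$-correction consistently through all three identities, together with verifying the $q$-binomial algebraic identities that bring parts (1) and (2) into the stated normalized form.
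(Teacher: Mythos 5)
Your overall strategy --- specialize the generating functions (\ref{eq:gen fnc of elemental}), (\ref{eq:gen fnc of complete}) and invoke the $q$-binomial formulas (\ref{eq:q-binom}), (\ref{eq:q-binom2}), with part (3) done by direct summation --- is exactly the paper's. Parts (1) and (3) are essentially right once the bookkeeping you flag at the very end is carried out: the paper writes the correction explicitly as $\prod_{j=1}^{r}(1+q^{j}y)(1+q^{-j}y)=\frac{1}{1+y}\prod_{j=0}^{2r}(1+q^{j}q^{-r}y)$, then expands $\frac{1}{1+y}=\sum_{i\ge 0}(-1)^{i}y^{i}$ and convolves, which is precisely your ``alternating partial sums telescope'' observation. (Note the extra factor is $(1+y)$, not $(1+q^{r}y)$: the variable missing from $(\boldsymbol{q}^{+\iota},\boldsymbol{q}^{-\iota})$ relative to $(q^{-r},\ldots,q^{r})$ is $q^{0}=1$, which is $q^{r}w$ in your shifted variable $w=q^{-r}y$, i.e.\ $y$ itself.)

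The genuine gap is in part (2). The identity $\sum_{n}h_{n}^{(2r)}(\boldsymbol{q}^{+\iota},\boldsymbol{q}^{-\iota})y^{n}=\prod_{j=0}^{2r}\frac{1}{1-q^{j-r}y}$ is false: the left side involves $2r$ variables and the right side $2r+1$, so what you have computed is $h_{n}^{(2r+1)}(q^{-r},\ldots,q^{r})$. Consequently the intermediate claim $h_{n}^{(2r)}(\boldsymbol{q}^{+\iota},\boldsymbol{q}^{-\iota})=q^{-rn}\binom{2r+n}{n}_{q}$ is not correct, and no $q$-Pascal manipulation can reconcile it with the stated answer, because the two quantities are genuinely unequal --- already for $n=1$ one has $q^{-r}\binom{2r+1}{1}_{q}=\sum_{j=-r}^{r}q^{j}$ versus $h_{1}^{(2r)}(\boldsymbol{q}^{+\iota},\boldsymbol{q}^{-\iota})=\sum_{j=-r}^{r}q^{j}-1$. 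The missing step, which is the paper's, is the same $(1\mp y)$-correction you already used to produce the $-1$ in part (3): $\prod_{j=1}^{r}\frac{1}{(1-q^{j}y)(1-q^{-j}y)}=(1-y)\prod_{j=0}^{2r}\frac{1}{1-q^{j}q^{-r}y}=(1-y)\sum_{k\ge 0}\binom{2r+k}{k}_{q}q^{-kr}y^{k}$, whose coefficient of $y^{n}$ is $q^{-nr}\binom{2r+n}{n}_{q}-q^{-(n-1)r}\binom{2r+n-1}{n-1}_{q}=q^{-nr}\bigl(\binom{2r+n}{n}_{q}-q^{r}\binom{2r+n-1}{n-1}_{q}\bigr)$, exactly the claim. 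So the fix is to apply the correction factor, not to search for an identity between $q$-binomial coefficients.
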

\begin{proof}
{\rm{(1)}} From generating function of elementary symmetric polynomials (\ref{eq:gen fnc of elemental}), 
\begin{align}
\sum_{n=0}^{2r}
   e_{n}^{(2r)}(\boldsymbol{q}^{+\iota },\boldsymbol{q}^{-\iota })y^{n}
   &=
   \prod_{j=1}^{r}(1+q^{j}y)(1+q^{-j}y)
   =
   \frac{1}{1+y}\prod_{j=0}^{2r}(1+q^{j}q^{-r}y). \nonumber 
\end{align}
By $q$-binomial formula (\ref{eq:q-binom}), we have 
\begin{align}
\sum_{n=0}^{2r}
   e_{n}^{(2r)}(\boldsymbol{q}^{+\iota },\boldsymbol{q}^{-\iota })y^{n}
   &=
   \sum_{i=0}^{\infty}(-1)^{i}y^{i}
   \sum_{k=0}^{2r+1}\binom{2r+1}{k}_{q}q^{\frac{k(k-1)}{2}}q^{-kr}y^{k} \nonumber \\
   &=
   \sum_{n=0}^{\infty}
   \sum_{k=0}^{\min(n,2r+1)}
   (-1)^{n-k}
   \binom{2r+1}{k}_{q}q^{\frac{k(k-2r-1)}{2}}y^{n}.
\nonumber
\end{align}
{\rm{(2)}} Similarly, we have 
\begin{align}
\sum_{n=0}^{\infty}
   h_{n}^{(2r)}(\boldsymbol{q}^{+\iota },\boldsymbol{q}^{-\iota })y^{n}
   &=
   \prod_{j=1}^{r}\frac{1}{(1-q^{j}y)(1-q^{-j}y)} \nonumber \\
   &=
   (1-y)\prod_{j=0}^{2r}\frac{1}{1-q^{j}q^{-r}y} \nonumber \\
   &=
   (1-y)\sum_{k=0}^{\infty}\binom{2r+k}{k}_{q}q^{-kr}y^{k} \nonumber \\
   &=
   \sum_{n=0}^{\infty}
   \left(\binom{2r+n}{n}_{q}
   -\binom{2r+n-1}{n-1}_{q}q^{r}\right)q^{-nr}y^{n}. \nonumber
\end{align}
{\rm{(3)}} By the definition of power sum and geometric series, we have 
\begin{align}
p_{n}^{(2r)}(\boldsymbol{q}^{+\iota },\boldsymbol{q}^{-\iota })
   =
   -1+\sum_{k=-r}^{r}q^{kn}
   =
   -1+\frac{q^{-rn}-q^{(r+1)n}}{1-q^{n}}. \nonumber
\end{align}
We remark this formula holds on the limit $q\rightarrow 1$. 
\end{proof}
\begin{cor}
{\rm{(1)}} For any non-negative integer $m$, 
\begin{align}
& \sum_{k=\max\left\{\left\lfloor \frac{m-r}{2}\right\rfloor,0\right\}}^{\left\lfloor \frac{m}{2}\right\rfloor}
   \!\!\!\!\!\!\!\!\!\!\!\!\!\!
   \sum_{l=0}^{\min(m-2k,2r+1)}
   \!\!\!\!\!\!\!\!\!\!\!\!\!\!
   (-1)^{m-l}
   \binom{2r+1}{l}_{q}q^{\frac{l(l-2r-1)}{2}}
   c_{m-r -1,k} \nonumber \\
   & \quad \quad =
   \begin{cases}
   e_{m}^{(r)}(\boldsymbol{q}^{+\iota }+\boldsymbol{q}^{-\iota }) & (m=0,1,\ldots, r) \\
   0 & ({\rm{others}})
   \end{cases}.
\end{align}
For $n=0,1,\ldots, 2r$, 
\begin{align}
\sum_{k=0}^{n}
   (-1)^{k}
   \binom{2r+1}{k}_{q}q^{\frac{k(k-2r-1)}{2}}
   =
   \sum_{k=\max\left\{\left\lfloor \frac{n-r}{2}\right\rfloor,0\right\}}^{\left\lfloor \frac{n}{2}\right\rfloor}
%    \sum_{l=\left\lfloor \frac{n-r}{2}\right\rfloor}^{\left\lfloor \frac{n}{2}\right\rfloor}
   \binom{r-n+2k}{k}
   e_{n-2k}^{(r)}(\boldsymbol{q}^{+\iota }+\boldsymbol{q}^{-\iota }).% \\
\end{align}
{\rm{(2)}} For any non-negative integers $n$ and $m$, we have
\begin{align}
& h_{m}^{(r)}(\boldsymbol{q}^{+\iota }+\boldsymbol{q}^{-\iota }) \nonumber \\
   & \quad =
   \sum_{k= 0}^{\left\lfloor \frac{m}{2}\right\rfloor}
   (-1)^{m}q^{-(m-2k)r}\left(\binom{2r+m-2k}{m-2k}_{q}
   -\binom{2r+m-2k-1}{m-2k-1}_{q}q^{r}\right)
   c_{m+r-1,k}, \\ %, \\
& q^{-nr}\left(\binom{2r+n}{n}_{q}
   -\binom{2r+n-1}{n-1}_{q}q^{r}\right)
   =
   \sum_{k=0}^{\left\lfloor \frac{n}{2}\right\rfloor}
   \binom{n-k+r-1}{k}
   h_{n-2k}^{(r)}(\boldsymbol{q}^{+\iota }+\boldsymbol{q}^{-\iota }). %\nonumber \\
% \label{eq:main power}
% \frac{1}{2}
%    \sum_{k= 0}^{n}
%    \binom{n}{k}
%    P_{k}^{(r)}(q)
%    &=
%    L_{n}^{(r)}(q).
%    \sum_{k\geq 0}^{\left\lfloor \frac{n}{2}\right\rfloor}
%       (-1)^{k}
%       \binom{n-k+r-1}{k}
%       F_{n-2k+1}^{(r)}(q)
%       &=
%       S_{n}^{(r)}(q)
\end{align}
{\rm{(3)}} For any non-negative integer $m$, 
\begin{align}
p_{m}^{(r)}(\boldsymbol{q}^{+\iota }+\boldsymbol{q}^{-\iota })
   &=
   -2^{m-1}
   +\frac{1}{2}\sum_{k=0}^{m}\binom{m}{k}\frac{1-q^{(2r+1)|m-2k|}}{1-q^{|m-2k|}}q^{-|m-2k|r}. 
% \label{eq:main power}
% \frac{1}{2}
%    \sum_{k= 0}^{n}
%    \binom{n}{k}
%    P_{k}^{(r)}(q)
%    &=
%    L_{n}^{(r)}(q).
%    \sum_{k\geq 0}^{\left\lfloor \frac{n}{2}\right\rfloor}
%       (-1)^{k}
%       \binom{n-k+r-1}{k}
%       F_{n-2k+1}^{(r)}(q)
%       &=
%       S_{n}^{(r)}(q)
\end{align}
For any positive integer $n$, 
\begin{align}
& -1+\frac{q^{-rn}-q^{(r+1)n}}{1-q^{n}} \nonumber \\
& \quad =
   2\sum_{k= 0}^{\lfloor \frac{n+1}{2}\rfloor}\binom{2k-n-1}{k}
   p_{n-2k}^{(r)}(\boldsymbol{q}^{+\iota }+\boldsymbol{q}^{-\iota })
   -\sum_{k= 0}^{\lfloor \frac{n}{2}\rfloor}\binom{2l-n}{k}p_{n-2k}^{(r)}(\boldsymbol{q}^{+\iota }+\boldsymbol{q}^{-\iota }).
% \\
% 1+2\sum_{l= 0}^{\lfloor \frac{n+1}{2}\rfloor}\binom{2l-n-1}{l}
%    p_{n-2l}^{(r)}(\boldsymbol{q}^{+\delta }+\boldsymbol{q}^{-\delta })
%    -\sum_{l= 0}^{\lfloor \frac{n}{2}\rfloor}\binom{2l-n}{l}p_{n-2l}^{(r)}(\boldsymbol{q}^{+\delta }+\boldsymbol{q}^{-\delta })
%    =
%    \frac{q^{-rn}-q^{(r+1)n}}{1-q^{n}}, \\
\end{align}
\end{cor}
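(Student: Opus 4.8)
The plan is to obtain all six identities as specializations of the main theorems. Set $\mathbf{z}=\boldsymbol{q}^{+\iota}=(q^{r},\ldots,q)$, so that $(\mathbf{z},\mathbf{z}^{-1})=(\boldsymbol{q}^{+\iota},\boldsymbol{q}^{-\iota})$ and $\mathbf{z}+\mathbf{z}^{-1}=\boldsymbol{q}^{+\iota}+\boldsymbol{q}^{-\iota}$, in Theorem \ref{thm:the first kind} and Theorem \ref{thm:the second kind}, and then substitute the closed forms of $e_{n}^{(2r)}(\boldsymbol{q}^{+\iota},\boldsymbol{q}^{-\iota})$, $h_{n}^{(2r)}(\boldsymbol{q}^{+\iota},\boldsymbol{q}^{-\iota})$, $p_{n}^{(2r)}(\boldsymbol{q}^{+\iota},\boldsymbol{q}^{-\iota})$ furnished by Proposition \ref{thm:prop1}. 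In each of parts (1)--(3) the first displayed identity then comes from Theorem \ref{thm:the first kind} and the second from Theorem \ref{thm:the second kind}; no further tool is needed beyond the elementary identity $\sum_{k=0}^{m}\binom{m}{k}=2^{m}$.

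Concretely, for part (1) I would insert Proposition \ref{thm:prop1}(1) for $e_{m-2k}^{(2r)}(\boldsymbol{q}^{+\iota},\boldsymbol{q}^{-\iota})$ into (\ref{eq:main elemental}) and into (\ref{eq:main elemental2}); for part (2), insert Proposition \ref{thm:prop1}(2) for $h_{m-2k}^{(2r)}(\boldsymbol{q}^{+\iota},\boldsymbol{q}^{-\iota})$ into (\ref{eq:main complete}) and into (\ref{eq:main complete2}); for part (3), insert Proposition \ref{thm:prop1}(3) for $p_{n}^{(2r)}(\boldsymbol{q}^{+\iota},\boldsymbol{q}^{-\iota})$ into (\ref{eq:main power}) and into (\ref{eq:main power2}). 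In the power-sum case, the constant term $-1$ in $p_{n}^{(2r)}(\boldsymbol{q}^{+\iota},\boldsymbol{q}^{-\iota})=-1+q^{-rn}\frac{1-q^{(2r+1)n}}{1-q^{n}}$ contributes, through (\ref{eq:main power}), the term $\tfrac12\sum_{k=0}^{m}\binom{m}{k}(-1)=-2^{m-1}$, while the remaining rational part reproduces the displayed $q$-sum; one must read the summand with $m-2k=0$ (which occurs only for even $m$) as the limiting value $\lim_{n\to 0}\bigl(q^{-rn}\tfrac{1-q^{(2r+1)n}}{1-q^{n}}\bigr)=2r+1$, i.e.\ as $p_{0}^{(2r)}(\boldsymbol{q}^{+\iota},\boldsymbol{q}^{-\iota})=2r$.

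I expect the only real effort to be bookkeeping of signs and index ranges. For the sign bookkeeping in part (1): Proposition \ref{thm:prop1}(1) naturally produces the factor $(-1)^{m-2k-l}=(-1)^{m-l}$, which has to be reconciled with the stated factors via $(-1)^{m-l}=(-1)^{m}(-1)^{l}$; and in the second identity of part (1) one uses that $\sum_{l}(-1)^{n-l}\binom{2r+1}{l}_{q}q^{l(l-2r-1)/2}$ equals $e_{n}^{(2r)}(\boldsymbol{q}^{+\iota},\boldsymbol{q}^{-\iota})$ for $n\le 2r$ and \emph{vanishes} for $n>2r$. This same vanishing, together with the convention $e_{n}^{(2r)}=0$ for $n>2r$, is exactly what collapses the a priori wider summation ranges coming out of (\ref{eq:main elemental}) and (\ref{eq:main elemental2}) to the tight lower limits $\max\{\lfloor\frac{m-r}{2}\rfloor,0\}$ and $\max\{\lfloor\frac{n-r}{2}\rfloor,0\}$ displayed in the statement. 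Once signs and ranges are pinned down, each of the six identities follows by a single substitution.
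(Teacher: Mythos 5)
Your proposal is correct and is exactly the route the paper intends: the corollary is stated without separate proof precisely because it is the immediate substitution of the evaluations in Proposition \ref{thm:prop1} into Theorems \ref{thm:the first kind} and \ref{thm:the second kind}, which is what you carry out. Your extra bookkeeping (the sign $(-1)^{m-2k-l}=(-1)^{m-l}$, the limiting value $p_{0}^{(2r)}=2r$ for the $m-2k=0$ term, and the vanishing of $e_{n}^{(2r)}$ for $n>2r$ that trims the summation ranges) is the only content the paper leaves implicit, and you have it right.
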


\section{Applications to $F_{n}^{(r)}$ and $L_{n}^{(r)}$}
In this section we investigate more specializations of Theorem \ref{thm:the first kind} and Theorem \ref{thm:the second kind}, and prove Theorem \ref{thm:explicit formulas}, Theorem \ref{thm:inversion of explicit formulas} and Theorem \ref{thm:initial values and recurrence formulas}. 
% that is including new identities for the classical Fibonacci and Lucas numbers. 
To apply Theorem \ref{thm:the first kind} and Theorem \ref{thm:the second kind} to $F_{n}^{(r)}$ and $L_{n}^{(r)}$, we evaluate  $e_{n}^{(2r)}(-\boldsymbol{\zeta}^{+\iota },-\boldsymbol{\zeta}^{-\iota })$, $h_{n}^{(2r)}(-\boldsymbol{\zeta}^{+\iota },-\boldsymbol{\zeta}^{-\iota })$, $p_{n}^{(2r)}(-\boldsymbol{\zeta}^{+\iota },-\boldsymbol{\zeta}^{-\iota })$ and $e_{m}^{(r)}(-\boldsymbol{\zeta}^{+\iota }-\boldsymbol{\zeta}^{-\iota })$. 
\begin{prop}
\label{thm:prop2}
{\rm{(1)}} For $n=0,1,\ldots, 2r$, we have 
\begin{align}
\label{eq:e2r evaluation}
e_{n}^{(2r)}(-\boldsymbol{\zeta}^{+\iota },-\boldsymbol{\zeta}^{-\iota })
   &=
   1. % \quad (n=0,1,\ldots, 2r), \\
% \label{eq:3rd result}
%    \sum_{k=\max\left\{\left\lfloor \frac{m-r}{2}\right\rfloor,0\right\}}^{\left\lfloor \frac{m}{2}\right\rfloor}
%    c_{m-r -1,k}
%    &=
%    \begin{cases}
%    e_{m}^{(r)}(-\boldsymbol{\zeta}^{+\delta }-\boldsymbol{\zeta}^{-\delta }) & (m=0,1,\ldots, r) \\
%    0 & ({\rm{others}})
%    \end{cases}.
\end{align}
{\rm{(2)}} For any non-negative integer $n$, 
\begin{align}
h_{n}^{(2r)}(-\boldsymbol{\zeta}^{+\iota },-\boldsymbol{\zeta}^{-\iota })
   &=
   \frac{1}{2}\left((-1)^{\left\lfloor \frac{n}{2r+1}\right\rfloor}-(-1)^{\left\lfloor \frac{n-2}{2r+1}\right\rfloor}\right) \nonumber \\
\label{eq:h2r evaluation}
   &=\begin{cases}
   1 & (n\equiv 0,1\,(\mathrm{mod}\,4r+2)) \\
   -1 & (n\equiv 2r+1,2r+2\,(\mathrm{mod}\,4r+2)) \\
   0 & ({\rm{others}})
   \end{cases}.
\end{align}
{\rm{(3)}} For any non-negative integer $n$, 
\begin{align}
\label{eq:p2r evaluation}
p_{n}^{(2r)}(-\boldsymbol{\zeta}^{+\iota },-\boldsymbol{\zeta}^{-\iota })
   &=
%    (-1)^{n}-\frac{2r+1}{2}\left((-1)^{\left\lfloor \frac{n}{2r+1}\right\rfloor}-(-1)^{\left\lfloor \frac{n-1}{2r+1}\right\rfloor}\right) \nonumber \\
%    &=\begin{cases}
%    (-1)^{n-1} & (n\not| \,\,\,2r+1) \\
%    (-1)^{n}2r & (n\mid 2r+1)
%    \end{cases}.
   (-1)^{n}(-1+(2r+1)\delta_{2r+1\mid n})
   =\begin{cases}
   (-1)^{n-1} & (2r+1\not| \,\,\,n) \\
   (-1)^{n}2r & (2r+1\mid n)
   \end{cases},
\end{align}
where 
$$
\delta_{2r+1\mid n}
   :=
   \begin{cases}
   0 & (2r+1\not| \,\,\,n) \\
   1 & (2r+1\mid n)
   \end{cases}. 
$$
\end{prop}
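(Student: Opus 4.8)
The plan is to use the fact that the $2r$-tuple $(-\boldsymbol{\zeta}^{+\iota},-\boldsymbol{\zeta}^{-\iota})$ consists of the numbers $-\omega$, where $\omega$ runs over the $2r$ nontrivial $(2r+1)$-th roots of unity. Indeed $e^{-2\pi\sqrt{-1}j/(2r+1)}=e^{2\pi\sqrt{-1}(2r+1-j)/(2r+1)}$, so $\{e^{\pm 2\pi\sqrt{-1}j/(2r+1)}:1\le j\le r\}$ is precisely the set of $(2r+1)$-th roots of unity other than $1$, each occurring once. Everything then reduces to the polynomial identity $\prod_{\omega^{2r+1}=1}(1-\omega y)=1-y^{2r+1}$ (equivalently $x^{2r+1}-1=\prod_{\omega}(x-\omega)$), combined with the generating functions (\ref{eq:gen fnc of elemental}), (\ref{eq:gen fnc of complete}), (\ref{eq:gen fnc of power}).

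For (1), substituting $\mathbf{z}=(-\boldsymbol{\zeta}^{+\iota},-\boldsymbol{\zeta}^{-\iota})$ into (\ref{eq:gen fnc of elemental}) gives
\[
\sum_{n=0}^{2r}e_{n}^{(2r)}(-\boldsymbol{\zeta}^{+\iota},-\boldsymbol{\zeta}^{-\iota})y^{n}=\prod_{\omega\ne 1}(1-\omega y)=\frac{1-y^{2r+1}}{1-y}=\sum_{n=0}^{2r}y^{n},
\]
and comparing coefficients yields $e_{n}^{(2r)}=1$ for $0\le n\le 2r$. This is a genuine polynomial identity, so no convergence issue arises.

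For (2), the same substitution into (\ref{eq:gen fnc of complete}), together with $\prod_{\omega^{2r+1}=1}(1+\omega y)=1+y^{2r+1}$ (replace $y$ by $-y$ above), gives
\[
\sum_{n\ge 0}h_{n}^{(2r)}(-\boldsymbol{\zeta}^{+\iota},-\boldsymbol{\zeta}^{-\iota})y^{n}=\prod_{\omega\ne 1}\frac{1}{1+\omega y}=\frac{1+y}{1+y^{2r+1}}=(1+y)\sum_{k\ge 0}(-1)^{k}y^{(2r+1)k}.
\]
Reading off the coefficient of $y^{n}$ we get $h_{n}^{(2r)}=(-1)^{k}$ if $n=(2r+1)k$ or $n=(2r+1)k+1$, and $0$ otherwise. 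It remains to massage this into the two stated forms: the floor expression follows because $\lfloor n/(2r+1)\rfloor$ and $\lfloor (n-2)/(2r+1)\rfloor$ differ (always by $1$) exactly when $n\equiv 0,1\pmod{2r+1}$, in which case they are $k$ and $k-1$ and $\tfrac12((-1)^{k}-(-1)^{k-1})=(-1)^{k}$; and sorting $n\equiv 0,1\pmod{2r+1}$ further by the parity of $k$ gives the piecewise description modulo $4r+2$.

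For (3), it is quickest to work straight from the definition (\ref{eq:power sum}) rather than from (\ref{eq:gen fnc of power}):
\[
p_{n}^{(2r)}(-\boldsymbol{\zeta}^{+\iota},-\boldsymbol{\zeta}^{-\iota})=\sum_{\omega\ne 1}(-\omega)^{n}=(-1)^{n}\Bigl(\sum_{\omega^{2r+1}=1}\omega^{n}-1\Bigr)=(-1)^{n}\bigl((2r+1)\delta_{2r+1\mid n}-1\bigr),
\]
using the orthogonality relation $\sum_{\omega^{2r+1}=1}\omega^{n}=(2r+1)\delta_{2r+1\mid n}$; splitting according to whether $(2r+1)\mid n$ gives the displayed piecewise form, and as with Proposition \ref{thm:prop1} this is a finite sum so needs no restriction on $y$. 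The only step that calls for any real care is the coefficient/congruence bookkeeping in part (2); parts (1) and (3) are essentially immediate once the root-of-unity description of the arguments is in place.
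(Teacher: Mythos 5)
Your proposal is correct and follows essentially the same route as the paper: the generating functions (\ref{eq:gen fnc of elemental}) and (\ref{eq:gen fnc of complete}) combined with $\prod_{j=1}^{r}(1-e^{2\pi\sqrt{-1}j/(2r+1)}y)(1-e^{-2\pi\sqrt{-1}j/(2r+1)}y)=(1-y^{2r+1})/(1-y)$ for parts (1) and (2), and the direct root-of-unity orthogonality computation from the definition for part (3). The bookkeeping in part (2) translating the coefficient description into the floor and mod-$(4r+2)$ forms is also carried out correctly.
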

\begin{proof}
{\rm{(1)}} From (\ref{eq:gen fnc of elemental}), we have 
\begin{align}
\sum_{n=0}^{2r}
   e_{n}^{(2r)}(-\boldsymbol{\zeta}^{+\iota },-\boldsymbol{\zeta}^{-\iota })y^{n}
   &=
   \prod_{j=1}^{r}\left(1-e^{2\pi \sqrt{-1}\frac{j}{2r+1}}y\right)\left(1-e^{-2\pi \sqrt{-1}\frac{j}{2r+1}}y\right) \nonumber \\
   &=
   \frac{1-y^{2r+1}}{1-y} \nonumber \\
   &=
   \sum_{n=0}^{2r}y^{n}. \nonumber 
\end{align}
{\rm{(2)}} From (\ref{eq:gen fnc of complete}), we have 
\begin{align}
\sum_{n=0}^{\infty}
   h_{n}^{(2r)}(-\boldsymbol{\zeta}^{+\iota },-\boldsymbol{\zeta}^{-\iota })y^{n}
   &=
   \prod_{j=1}^{r}\frac{1}{\left(1+e^{2\pi \sqrt{-1}\frac{j}{2r+1}}y\right)\left(1+e^{-2\pi \sqrt{-1}\frac{j}{2r+1}}y\right)} \nonumber \\
   &=
   \frac{1+y}{1+y^{2r+1}} \nonumber \\
   &=
   \sum_{k=0}^{\infty}(-1)^{k}(y^{(2r+1)k}+y^{(2r+1)k+1}) \nonumber \\
   &=
   \sum_{n=0}^{\infty}
   \frac{1}{2}\left((-1)^{\left\lfloor \frac{n}{2r+1}\right\rfloor}-(-1)^{\left\lfloor \frac{n-2}{2r+1}\right\rfloor}\right)y^{n}. \nonumber  
\end{align}
{\rm{(3)}} By the definition of $p_{n}^{(2r)}$, 
\begin{align}
p_{n}^{(2r)}(-\boldsymbol{\zeta}^{+\iota },-\boldsymbol{\zeta}^{-\iota })
   &=
   (-1)^{n}\left(-1+\sum_{k=-r}^{r}\zeta_{2r+1}^{kn}\right)
   =
   (-1)^{n}(-1+(2r+1)\delta_{2r+1\mid n}). \nonumber 
\end{align}
\end{proof}
\noindent
{\bf{Proof of Theorem \ref{thm:explicit formulas} and Theorem \ref{thm:inversion of explicit formulas}}} 
From Theorem \ref{thm:the first kind} {\rm{(2)}} and {\rm{(3)}} and Proposition 1 {\rm{(2)}} and {\rm{(3)}}, we derive explicit formulas of $F_{n}^{(r)}$ and $L_{n}^{(r)}$. 
Similarly, Theorem \ref{thm:inversion of explicit formulas} follows from Theorem \ref{thm:the first kind} {\rm{(2)}} and {\rm{(3)}} and Proposition 1 {\rm{(2)}} and {\rm{(3)}}. \qed

By specialization of Theorem 3 {\rm{(2)}} and {\rm{(3)}}, we obtain the initial values of $F_{n+1}^{(r)}$ and $L_{n}^{(r)}$.
\begin{cor}
\label{thm:initial conditions}
{\rm{(1)}} If $m\leq r$, then we have
\begin{equation}
% F_{2n-1}^{(r+1)}=F_{2n}^{(r)}=c_{r+2n-2,n-1}=\binom{r+2n-2}{n-1}-\binom{r+2n-2}{n-2}
F_{2m-1}^{(r+1)}=F_{2m}^{(r)}=\binom{2m+r-2}{m-1}-\binom{2m+r-2}{m-2}.  
\end{equation}
{\rm{(2)}} If $m<2r+1$, then we have 
\begin{align}
L_{2m}^{(r)}
   &=
   -2^{2m-1}
   +\frac{2r+1}{2}\binom{2m}{m}. 
\end{align}
If $m<r$, then we have 
\begin{align}
L_{2m+1}^{(r)}
   &=
   4^{m}.
\end{align}
\end{cor}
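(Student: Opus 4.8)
The plan is to obtain both parts simply by specializing the closed formulas of Theorem~\ref{thm:explicit formulas} (equivalently, by feeding Proposition~\ref{thm:prop2}~(2),(3) into the generating-function identities of Theorem~\ref{thm:the first kind}~(2),(3)) to small values of the index, where the multi-term sums collapse to a single summand. The entire content of the proof is to check, for each stated range of $m$, that the floor functions and divisibility indicators appearing in those formulas annihilate every contribution except the $k=0$ term.

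For part~(1) I would start from the second expression in Theorem~\ref{thm:explicit formulas},
\[
F_{n}^{(r)}
   =
   \sum_{k=0}^{\lfloor (n-1)/(2r+1)\rfloor}
   (-1)^{k}\,c_{n+r-2,\,\lfloor (n-1-(2r+1)k)/2\rfloor}.
\]
Taking $n=2m$ with $1\le m\le r$ gives $0\le 2m-1<2r+1$, so $\lfloor (2m-1)/(2r+1)\rfloor=0$, only the $k=0$ term survives, and $F_{2m}^{(r)}=c_{2m+r-2,m-1}=\binom{2m+r-2}{m-1}-\binom{2m+r-2}{m-2}$. Taking instead $n=2m-1$ with $r$ replaced by $r+1$, one has $0\le 2m-2<2r+3$, so again only $k=0$ contributes, and $F_{2m-1}^{(r+1)}=c_{(2m-1)+(r+1)-2,\,\lfloor (2m-2)/2\rfloor}=c_{2m+r-2,m-1}$, which is the very same quantity; chaining the two evaluations gives the asserted chain of equalities (the boundary case $m=0$ being covered by the convention $\binom{n}{k}=0$ for $k<0$).

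For part~(2) I would use the $L$-part of Theorem~\ref{thm:explicit formulas}. For $n=2m$ with $0\le m<2r+1$ we have $\lfloor m/(2r+1)\rfloor=0$, so the sum reduces to its $k=0$ term $\binom{2m}{m}$ and $L_{2m}^{(r)}=-2^{2m-1}+\frac{2r+1}{2}\binom{2m}{m}$. For $n=2m+1$ with $0\le m<r$, the upper summation limit $\lfloor (m-r)/(2r+1)\rfloor$ equals $-1$ (since $-r\le m-r\le-1$) while the lower limit $-\lfloor (m+r+1)/(2r+1)\rfloor$ equals $0$ (since $r+1\le m+r+1\le 2r$), so the sum is empty and $L_{2m+1}^{(r)}=4^{m}$.

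The computation is routine; the only point demanding care is the bookkeeping of the floor functions. Concretely, in part~(1) one must verify not merely that both index pairs $(2m,r)$ and $(2m-1,r+1)$ fall in the single-term regime but that they produce literally the same subscripted coefficient $c_{2m+r-2,m-1}$, and in part~(2) one must confirm that the summation range in the odd case is genuinely empty rather than a single vanishing term; everything else is direct substitution.
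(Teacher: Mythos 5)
Your proposal is correct and takes essentially the same route as the paper, which obtains this corollary precisely by specializing the explicit formulas of Theorem~\ref{thm:explicit formulas} to indices where the sums collapse to the $k=0$ term (or, in the odd $L$ case, become empty). Your floor-function bookkeeping — in particular the verification that both $F_{2m}^{(r)}$ and $F_{2m-1}^{(r+1)}$ reduce to the same coefficient $c_{2m+r-2,m-1}$, and that the summation range for $L_{2m+1}^{(r)}$ runs from $0$ to $-1$ — is accurate and supplies the details the paper leaves implicit.
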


To prove Theorem \ref{thm:initial values and recurrence formulas}, we need to evaluate $e_{m}^{(r)}(-\boldsymbol{\zeta}^{+\iota }-\boldsymbol{\zeta}^{-\iota })$, which can be computed from (\ref{eq:main elemental}) and (\ref{eq:e2r evaluation}). 
\begin{prop}
For $m=0,1,\ldots, r$, we have 
\begin{align}
\label{eq:3rd result2}
e_{m}^{(r)}(-\boldsymbol{\zeta}^{+\iota }-\boldsymbol{\zeta}^{-\iota })
   =
   \sum_{k=0}^{\left\lfloor \frac{m}{2}\right\rfloor}
   c_{m-r -1,k}
   =
   \binom{m-r -1}{\left\lfloor \frac{m}{2}\right\rfloor}
   =
   (-1)^{\left\lfloor \frac{m}{2}\right\rfloor}
   \binom{r -\left\lfloor \frac{m+1}{2}\right\rfloor}{\left\lfloor \frac{m}{2}\right\rfloor}.
\end{align}
\end{prop}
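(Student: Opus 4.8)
The plan is to obtain the first equality directly from the first kind formula (\ref{eq:main elemental}) specialized at the primitive $(2r+1)$st roots of unity, and then to evaluate the resulting sum of $c$-coefficients by telescoping followed by the upper-negation identity for binomial coefficients.

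First I would specialize. Take $\mathbf{z}=-\boldsymbol{\zeta}^{+\iota}$; then $\mathbf{z}^{-1}=-\boldsymbol{\zeta}^{-\iota}$, so that $(\mathbf{z},\mathbf{z}^{-1})=(-\boldsymbol{\zeta}^{+\iota},-\boldsymbol{\zeta}^{-\iota})$ and $\mathbf{z}+\mathbf{z}^{-1}=-\boldsymbol{\zeta}^{+\iota}-\boldsymbol{\zeta}^{-\iota}$. Apply (\ref{eq:main elemental}) with this $\mathbf{z}$ for $m\in\{0,1,\ldots,r\}$. Since $\lfloor m/2\rfloor\le\lfloor r/2\rfloor\le r$, the lower limit $\max(\lfloor m/2\rfloor-r,0)$ equals $0$; and since $0\le m-2k\le m\le r<2r$, the evaluation (\ref{eq:e2r evaluation}) applies to each summand, giving $e_{m-2k}^{(2r)}(-\boldsymbol{\zeta}^{+\iota},-\boldsymbol{\zeta}^{-\iota})=1$. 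Hence (\ref{eq:main elemental}) reads $e_{m}^{(r)}(-\boldsymbol{\zeta}^{+\iota}-\boldsymbol{\zeta}^{-\iota})=\sum_{k=0}^{\lfloor m/2\rfloor}c_{m-r-1,k}$, which is the first equality.

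Next I would collapse the sum. Writing $c_{n,k}=\binom{n}{k}-\binom{n}{k-1}$ and using the convention $\binom{n}{-1}=0$ (consistent with $c_{n,0}=\binom{n}{0}=1$), the sum telescopes to $\sum_{k=0}^{K}c_{n,k}=\binom{n}{K}$; taking $n=m-r-1$ and $K=\lfloor m/2\rfloor$ gives the second equality. For the third, note that $m\le r$ forces $m-r-1=-(r+1-m)$ with $r+1-m\ge1$, so the upper-negation identity $\binom{-a}{b}=(-1)^{b}\binom{a+b-1}{b}$ yields $\binom{m-r-1}{\lfloor m/2\rfloor}=(-1)^{\lfloor m/2\rfloor}\binom{r-m+\lfloor m/2\rfloor}{\lfloor m/2\rfloor}$; finally the elementary floor identity $m-\lfloor m/2\rfloor=\lfloor(m+1)/2\rfloor$ rewrites the upper entry as $r-\lfloor(m+1)/2\rfloor$, completing the chain of equalities.

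There is no serious obstacle here: the proposition is essentially a corollary of (\ref{eq:main elemental}) and (\ref{eq:e2r evaluation}). The only points needing care are checking that, throughout the range $m\le r$, the lower summation bound in (\ref{eq:main elemental}) is genuinely $0$ and the indices $m-2k$ remain in the interval $[0,2r]$ where (\ref{eq:e2r evaluation}) is valid, together with fixing the convention $\binom{n}{-1}=0$ so that the telescoping is exact; everything else is routine bookkeeping.
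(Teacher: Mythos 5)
Your proposal is correct and follows exactly the route the paper itself indicates (substituting the evaluation $e_{n}^{(2r)}(-\boldsymbol{\zeta}^{+\iota},-\boldsymbol{\zeta}^{-\iota})=1$ into the first kind formula for $e$, then telescoping the $c_{m-r-1,k}$ and applying upper negation); the paper merely states this derivation without spelling out the bookkeeping you supply, and offers its generating-function argument only as an independent alternative in the subsequent remark. No gaps.
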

\begin{rem}
{\rm{(1)}} 
For (\ref{eq:3rd result2}), we give another proof without using (\ref{eq:main elemental}). 
Let $x:=z+z^{-1}$.
First, we remark
\begin{align}
% \label{eq:gen fnc of C}
\sum_{k=-r}^{r}z^{k}
   =
   \prod_{j=1}^{r}\left(x-2\cos\left(\frac{2\pi j}{2r+1}\right)\right)
   =
   \sum_{m=0}^{r}e_{m}^{(r)}(-\boldsymbol{\zeta}^{+\iota }-\boldsymbol{\zeta}^{-\iota })x^{r-m}. \nonumber
\end{align}
On the other hand, if $|u|<|z|<|u|^{-1}$, then 
\begin{align}
\sum_{r=0}^{\infty}u^{r}\sum_{k=-r}^{r}z^{k}
   &=
   \sum_{r=0}^{\infty}u^{r}\left(\frac{z^{-r}}{1-z}-\frac{z^{r+1}}{1-z}\right) \nonumber \\
   &=
   \frac{1}{1-z}
   \left(\frac{1}{1-z^{-1}u}-\frac{z}{1-zu}\right) \nonumber \\
   &=
   \frac{1+u}{(1-z^{-1}u)(1-zu)} \nonumber \\
   &=
   \frac{1+u}{1+u^{2}}\frac{1}{1-x\frac{u}{1+u^{2}}} \nonumber \\
   &=
   (1+u)\sum_{N=0}^{\infty}x^{N}u^{N}(1+u^{2})^{-N-1} \nonumber \\
   &=
   (1+u)\sum_{N=0}^{\infty}x^{N}\sum_{k=0}^{\infty}\binom{-N-1}{k}u^{N+2k} \nonumber \\
   &=
   \sum_{r=0}^{\infty}u^{r}
   \left(
   \sum_{m=0}^{\left\lfloor \frac{r}{2}\right\rfloor }
   (-1)^{m}\binom{r-m}{m}x^{r-2m}
   +\sum_{m=0}^{\left\lfloor \frac{r-1}{2}\right\rfloor }
   (-1)^{m}\binom{r-1-m}{m}x^{r-1-2m}\right). \nonumber
\end{align}
Hence we obtain the conclusion (\ref{eq:3rd result2}).\\
{\rm{(2)}} The formula (\ref{eq:3rd result2}) is obtained by substituting (\ref{eq:e2r evaluation}) into (\ref{eq:main elemental}). 
Similarly, by substituting (\ref{eq:e2r evaluation}) for (\ref{eq:main elemental2}), for $n=0,1,\ldots, 2r$ we obtain 
\begin{equation}
   \sum_{k=\max\left\{\left\lfloor \frac{n-r}{2}\right\rfloor,0\right\}}^{\left\lfloor \frac{n}{2}\right\rfloor}
%    \sum_{l=\left\lfloor \frac{n-r}{2}\right\rfloor}^{\left\lfloor \frac{n}{2}\right\rfloor}
   \binom{r-n+2k}{k}
   \binom{n-2k-r -1}{\left\lfloor \frac{n}{2}\right\rfloor -k}
   =
   1.
\end{equation}
\end{rem}

\noindent
{\bf{Proof of Theorem \ref{thm:initial values and recurrence formulas}}} 
From the Wronski relations, Newton's formulas and (\ref{eq:3rd result2}), for any non-negative integer $n$ we have 
\begin{align*}
% \sum_{j=0}^{\min{(n,r)}}
%    (-1)^{j+\left\lfloor \frac{j}{2}\right\rfloor -1}
%    \binom{r +\left\lfloor \frac{j}{2}\right\rfloor-j}{\left\lfloor \frac{j}{2}\right\rfloor}
%    F_{n-j+1}^{(r)}
%    &=
%    0, \\
% \sum_{j=0}^{\min{(n,r)}}(-1)^{n-j}(-1)^{\left\lfloor \frac{j}{2}\right\rfloor}
%    \binom{r +\left\lfloor \frac{j}{2}\right\rfloor-j}{\left\lfloor \frac{j}{2}\right\rfloor}L_{n-j+1}^{(r)}
%    &=
%    (n+1)e_{n+1}^{(r)}(\mathbf{z}). 
& \sum_{j=0}^{\min{(n,r)}}
   (-1)^{\left\lfloor \frac{j-1}{2}\right\rfloor }
   \binom{r -\left\lfloor \frac{j+1}{2}\right\rfloor}{\left\lfloor \frac{j}{2}\right\rfloor}
   F_{n-j+1}^{(r)}
   =
   0, \\
& \sum_{j=0}^{\min{(n,r)}}(-1)^{n-\left\lfloor \frac{j+1}{2}\right\rfloor}
   \binom{r -\left\lfloor \frac{j+1}{2}\right\rfloor}{\left\lfloor \frac{j}{2}\right\rfloor}L_{n-j+1}^{(r)} \nonumber \\
   & \quad =
   \begin{cases}
   (-1)^{\left\lfloor \frac{n+1}{2}\right\rfloor}(n+1)
   \binom{r -\left\lfloor \frac{n}{2}\right\rfloor -1}{\left\lfloor \frac{n+1}{2}\right\rfloor} & (n=0,1,\ldots ,r-1)\\
   0 & (n\geq r)
   \end{cases}.
\end{align*}
Then $\{F_{n+1}^{(r)}\}_{n\geq 0}$ and $\{L_{n}^{(r)}\}_{n\geq 0}$ satisfy the recursion (\ref{eq:recursion of F and L}).

The initial values of $\{F_{n+1}^{(r)}\}_{n\geq 0}$ are determined by the vanishing property (\ref{eq:vanishing property})
$$
F_{0}^{(r)}=\cdots =F_{-(r-2)}^{(r)}=0
$$
and $F_{1}^{(r)}=h_{0}^{(r)}=1$. 
The initial values of $\{L_{n}^{(r)}\}_{n\geq 0}$ follows from Corollary \ref{thm:initial conditions} {\rm{(2)}}.\qed

\begin{exa}
\noindent
\underline{$r=1$}  
\begin{align}
% F_{1}^{(1)}&=1, \,L_{0}^{(1)}=1, \nonumber \\
% a_{n+1}^{(1)}&=a_{n}^{(1)}. \nonumber
F_{1}^{(1)}&=1, \,L_{0}^{(1)}=1, \,
a_{n+1}^{(1)}=a_{n}^{(1)}=1. \nonumber
\end{align}
% $$
% 1,1,1,1,1,1,1,1,1,1,1,1,1,1,1,1,1,1,1,1,1,1,1,1,1,1,1,1,1,1,1,1,1,1,1,1,1,\ldots
% $$
\noindent
\underline{$r=2$ (Fibonacci numbers and Lucas numbers)} 
\begin{align}
% F_{-1}^{(2)}&=0, \, F_{0}^{(2)}=1, \, L_{0}^{(2)}=2, \, L_{1}^{(1)}=1, \nonumber \\
% a_{n+2}^{(2)}&=a_{n+1}^{(2)}+a_{n}^{(2)}. \nonumber \\
F_{-1}^{(2)}&=0, \, F_{0}^{(2)}=1, \, L_{0}^{(2)}=2, \, L_{1}^{(1)}=1, \,
a_{n+2}^{(2)}=a_{n+1}^{(2)}+a_{n}^{(2)}. \nonumber \\
F_{n+1}^{(2)}&: 1,1,2,3,5,8,13,21,34,55,89,144,233,377,610,987,1597,\ldots \nonumber \\
L_{n}^{(2)}&: 2,1,3,4,7,11,18,29,47,76,123,199,322,521,843,1364,\ldots \nonumber 
\end{align}
\underline{$r=3$ (OEIS A006053 and OEIS A096975)} \quad 
\begin{align}
F_{-1}^{(3)}&=F_{0}^{(3)}=0, \, F_{1}^{(3)}=1, \, L_{0}^{(3)}=3, \, L_{1}^{(3)}=1, \, L_{2}^{(3)}=5, \nonumber \\
a_{n+3}^{(3)}&=a_{n+2}^{(3)}+2a_{n+1}^{(3)}-a_{n}^{(3)}. \nonumber \\
F_{n+1}^{(3)}&: 1,1,3,4,9,14,28,47,89,155,286,507,924,1652,2993,\ldots \nonumber \\
L_{n}^{(3)}&: 3,1,5,4,13,16,38,57,117,193,370,639,1186,2094,\ldots \nonumber 
\end{align}
% $
% F_{-1}^{(3)}=F_{0}^{(3)}=0, \, F_{1}^{(3)}=1, \, F_{n+3}^{(3)}=F_{n+2}^{(3)}+2F_{n+1}^{(3)}-F_{n}^{(3)}.
% $
% $$
% 1,1,3,4,9,14,28,47,89,155,286,507,924,1652,2993,5373,9707,17460,31501,56714,\ldots
% $$
\underline{$r=4$ (OEIS A188021 and OEIS A094649)}
\begin{align}
F_{-2}^{(4)}&=F_{-1}^{(4)}=F_{0}^{(4)}=0, \, F_{1}^{(4)}=1, \nonumber \\
L_{0}^{(4)}&=4, \, L_{1}^{(4)}=1, \, L_{2}^{(4)}=7, \, L_{3}^{(4)}=4, \nonumber \\
a_{n+4}^{(4)}&=a_{n+3}^{(4)}+3a_{n+2}^{(4)}-2a_{n+1}^{(4)}-a_{n}^{(4)}. \nonumber \\
F_{n+1}^{(4)}&: 1,1,4,5,14,20,48,75,165,274,571,988,1988,3536,6953,\ldots \nonumber \\
L_{n}^{(4)}&: 4, 1, 7, 4, 19, 16, 58, 64, 187, 247, 622, 925, 2110, 3394, 7252, \ldots \nonumber 
\end{align}
% $F_{-2}^{(4)}=F_{-1}^{(4)}=F_{0}^{(4)}=0, \, F_{1}^{(4)}=1$, 
% $$
% F_{n+4}^{(4)}=F_{n+3}^{(4)}+3F_{n+2}^{(4)}-2F_{n+1}^{(4)}-F_{n}^{(4)}.
% $$
% $$
% 1,1,4,5,14,20,48,75,165,274,571,988,1988,3536,6953,12597,24396,44745,85786,\ldots
% $$
\underline{$r=5$ (OEIS A231181 and OEIS A189234)} 
\begin{align}
F_{-3}^{(5)}&=F_{-2}^{(5)}=F_{-1}^{(5)}=F_{0}^{(5)}=0, \, F_{1}^{(5)}=1, \nonumber \\ 
L_{0}^{(5)}&=5, \, L_{1}^{(5)}=1, \, L_{2}^{(5)}=9, \, L_{3}^{(5)}=4, \, L_{4}^{(5)}=25, \nonumber \\
a_{n+5}^{(5)}&=a_{n+4}^{(5)}+4a_{n+3}^{(5)}-3a_{n+2}^{(5)}-3a_{n+1}^{(5)}+a_{n}^{(5)}. \nonumber \\
F_{n+1}^{(5)}&: 1,1,5,6,20,27,75,110,275,429,1001,1637,3639,6172,\ldots \nonumber \\
L_{n}^{(5)}&: 5, 1, 9, 4, 25, 16, 78, 64, 257, 256, 874, 1013, 3034, 3953, \ldots \nonumber 
\end{align}
% $F_{-3}^{(5)}=F_{-2}^{(5)}=F_{-1}^{(5)}=F_{0}^{(5)}=0, \, F_{1}^{(5)}=1$, 
% $$
% F_{n+5}^{(5)}=F_{n+4}^{(5)}+4F_{n+3}^{(5)}-3F_{n+2}^{(5)}-3F_{n+1}^{(5)}+F_{n}^{(5)}.
% $$
% $$
% 1,1,5,6,20,27,75,110,275,429,1001,1637,3639,6172,13243,23104,48280,86090,\ldots
% $$
\underline{$r=6$} \quad 
\begin{align}
F_{-4}^{(6)}&=F_{-3}^{(6)}=F_{-2}^{(6)}=F_{-1}^{(6)}=F_{0}^{(6)}=0, \, F_{1}^{(6)}=1, \nonumber \\
L_{0}^{(6)}&=6, \, L_{1}^{(6)}=1, \, L_{2}^{(6)}=11, \, L_{3}^{(6)}=4, \, L_{4}^{(6)}=31, \, L_{5}^{(6)}=16, \nonumber \\
a_{n+6}^{(6)}&=a_{n+5}^{(6)}+5a_{n+4}^{(6)}-4a_{n+3}^{(6)}-6a_{n+2}^{(6)}+3a_{n+1}^{(6)}+a_{n}^{(6)}. \nonumber \\
F_{n+1}^{(6)}&: 1,1,6,7,27,35,110,154,429,637,1638,2548,6188,9995,\ldots \nonumber \\
L_{n}^{(6)}&: 6, 1, 11, 4, 31, 16, 98, 64, 327, 256, 1126, 1024, 3958, 4083, \ldots \nonumber 
\end{align}
% $\,F_{-4}^{(6)}=F_{-3}^{(6)}=F_{-2}^{(6)}=F_{-1}^{(6)}=F_{0}^{(6)}=0, \, F_{1}^{(6)}=1$, 
% $$
% F_{n+6}^{(6)}=F_{n+5}^{(6)}+5F_{n+4}^{(6)}-4F_{n+3}^{(6)}-6F_{n+2}^{(6)}+3F_{n+1}^{(6)}+F_{n}^{(6)}.
% $$
% $$
% 1,1,6,7,27,35,110,154,429,637,1638,2548,6188,9995,23255,38741,87190,149017,\ldots
% $$
\end{exa}

\begin{table}[hbtp]
\begin{center}
  \begin{tabular}{|c||c|c|c|c|c|c|c|c|c|c|c|c|c|c|} \hline
   $r\setminus n$ & 1 & 2 & 3 & 4 & 5 & 6 & 7 & 8 & 9 & 10 & 11 & 12 \\ \hline \hline
   1 & 1 & 1 & 1 & 1 & 1 & 1 & 1 & 1 & 1 & 1 & 1 & 1 \\ \hline
   2 & 1 & 1 & 2 & 3 & 5 & 8 & 13 & 21 & 34 & 55 & 89 & 144\\ \hline
   3 & 1 & 1 & 3 & 4 & 9 & 14 & 28 & 47 & 89 & 155 & 286 & 507 \\ \hline
   4 & 1 & 1 & 4 & 5 & 14 & 20 & 48 & 75 & 165 & 274 & 571 & 988 \\ \hline
   5 & 1 & 1 & 5 & 6 & 20 & 27 & 75 & 110 & 275 & 429 & 1001 & 1637 \\ \hline
   6 & 1 & 1 & 6 & 7 & 27 & 35 & 110 & 154 & 429 & 637 & 1638 & 2548 \\ \hline
   7 & 1 & 1 & 7 & 8 & 35 & 44 & 154 & 208 & 637 & 910 & 2548 & 3808 \\ \hline
   8 & 1 & 1 & 8 & 9 & 44 & 54 & 208 & 273 & 910 & 1260 & 3808 & 5508 \\ \hline
   9 & 1 & 1 & 9 & 10 & 54 & 65 & 273 & 350 & 1260 & 1700 & 5508 & 7752 \\ \hline
   10 & 1 & 1 & 10 & 11 & 65 & 77 & 350 & 440 & 1700 & 2244 & 7752 & 10659 \\ \hline 
   11 & 1 & 1 & 11 & 12 & 77 & 90 & 440 & 544 & 2244 & 2907 & 10659 & 14364 \\ \hline
   12 & 1 & 1 & 12 & 13 & 90 & 104 & 544 & 663 & 2907 & 3705 & 14364 & 19019 \\ \hline
   13 & 1 & 1 & 13 & 14 & 104 & 119 & 663 & 798 & 3705 & 4655 & 19019 & 24794 \\ \hline 
   14 & 1 & 1 & 14 & 15 & 119 & 135 & 798 & 950 & 4655 & 5775 & 24794 & 31878 \\ \hline
   15 & 1 & 1 & 15 & 16 & 135 & 152 & 950 & 1120 & 5775 & 7084 & 31878 & 40480 \\ \hline 
   16 & 1 & 1 & 16 & 17 & 152 & 170 & 1120 & 1309 & 7084 & 8602 & 40480 & 50830 \\ \hline 
  \end{tabular}
  \caption{$F_{n}^{(r)}$}
\end{center}
\begin{center}
  \begin{tabular}{|c||c|c|c|c|c|c|c|c|c|c|c|c|c|c|} \hline
   $r\setminus n$ & 0 & 1 & 2 & 3 & 4 & 5 & 6 & 7 & 8 & 9 & 10 & 11 & 12 & 13 \\ \hline \hline
   1 & 1 & 1 & 1 & 1 & 1 & 1 & 1 & 1 & 1 & 1 & 1 & 1 & 1 & 1 \\ \hline
   2 & 2 & 1 & 3 & 4 & 7 & 11 & 18 & 29 & 47 & 76 & 123 & 199 & 322 & 521 \\ \hline
   3 & 3 & 1 & 5 & 4 & 13 & 16 & 38 & 57 & 117 & 193 & 370 & 639 & 1186 & 2094 \\ \hline
   4 & 4 & 1 & 7 & 4 & 19 & 16 & 58 & 64 & 187 & 247 & 622 & 925 & 2110 & 3394 \\ \hline
   5 & 5 & 1 & 9 & 4 & 25 & 16 & 78 & 64 & 257 & 256 & 874 & 1013 & 3034 & 3953 \\ \hline
   6 & 6 & 1 & 11 & 4 & 31 & 16 & 98 & 64 & 327 & 256 & 1126 & 1024 & 3958 & 4083 \\ \hline
   7 & 7 & 1 & 13 & 4 & 37 & 16 & 118 & 64 & 397 & 256 & 1378 & 1024 & 4882 & 4096 \\ \hline
   8 & 8 & 1 & 15 & 4 & 43 & 16 & 138 & 64 & 467 & 256 & 1630 & 1024 & 5806 & 4096 \\ \hline
   9 & 9 & 1 & 17 & 4 & 49 & 16 & 158 & 64 & 537 & 256 & 1882 & 1024 & 6730 & 4096 \\ \hline
   10 & 10 & 1 & 19 & 4 & 55 & 16 & 178 & 64 & 607 & 256 & 2134 & 1024 & 7654 & 4096 \\ \hline 
   11 & 11 & 1 & 21 & 4 & 61 & 16 & 198 & 64 & 677 & 256 & 2386 & 1024 & 8578 & 4096 \\ \hline
   12 & 12 & 1 & 23 & 4 & 67 & 16 & 218 & 64 & 747 & 256 & 2638 & 1024 & 9502 & 4096 \\ \hline
   13 & 13 & 1 & 25 & 4 & 73 & 16 & 238 & 64 & 817 & 256 & 2890 & 1024 & 10426 & 4096 \\ \hline 
   14 & 14 & 1 & 27 & 4 & 79 & 16 & 258 & 64 & 887 & 256 & 3142 & 1024 & 11350 & 4096 \\ \hline
   15 & 15 & 1 & 29 & 4 & 85 & 16 & 278 & 64 & 957 & 256 & 3394 & 1024 & 11274 & 4096 \\ \hline 
   16 & 16 & 1 & 31 & 4 & 91 & 16 & 298 & 64 & 1027 & 256 & 3646 & 1024 & 13198 & 4096 \\ \hline
  \end{tabular}
  \caption{$L_{n}^{(r)}$}
\end{center}
\end{table}
% \begin{table}[h]
% \begin{center}
%   \begin{tabular}{|c||c|c|c|c|c|c|c|c|c|c|c|c|c|} \hline
%    $r\setminus n$ & 0 & 1 & 2 & 3 & 4 & 5 & 6 & 7 & 8 & 9 & 10  \\ \hline \hline
%    1 & 1 & 1 & 1 & 1 & 1 & 1 & 1 & 1 & 1 & 1 & 1 \\ \hline
%    2 & 2 & 1 & 3 & 4 & 7 & 11 & 18 & 29 & 47 & 76 & 123 \\ \hline
%    3 & 3 & 1 & 5 & 4 & 13 & 16 & 38 & 57 & 117 & 193 & 370 \\ \hline
%    4 & 4 & 1 & 7 & 4 & 19 & 16 & 58 & 64 & 187 & 247 & 622 \\ \hline
%    5 & 5 & 1 & 9 & 4 & 25 & 16 & 78 & 64 & 257 & 256 & 874 \\ \hline
%    6 & 6 & 1 & 11 & 4 & 31 & 16 & 98 & 64 & 327 & 256 & 1126 \\ \hline
%    7 & 7 & 1 & 13 & 4 & 37 & 16 & 118 & 64 & 397 & 256 & 1378 \\ \hline
%    8 & 8 & 1 & 15 & 4 & 43 & 16 & 138 & 64 & 467 & 256 & 1630 \\ \hline
%    9 & 9 & 1 & 17 & 4 & 49 & 16 & 158 & 64 & 537 & 256 & 1882 \\ \hline
%    10 & 10 & 1 & 19 & 4 & 55 & 16 & 178 & 64 & 607 & 256 & 2134 \\ \hline 
%    11 & 11 & 1 & 21 & 4 & 61 & 16 & 198 & 64 & 677 & 256 & 2386 \\ \hline
%    12 & 12 & 1 & 23 & 4 & 67 & 16 & 218 & 64 & 747 & 256 & 2638 \\ \hline
%   \end{tabular}
%   \caption{$L_{n}^{(r)}$}
% \end{center}
% \end{table}
Finally, we mention some interesting examples of our results, including the seemingly new formulas for the Fibonacci and Lucas numbers. 
\begin{cor}
\label{thm:new Fibonacci formulas}
For any non-negative integers $m$ and $n$, we have 
\begin{align}
   \sum_{k= 0}^{\left\lfloor \frac{m}{2}\right\rfloor}
   \frac{1}{2}\left((-1)^{\left\lfloor \frac{m-2k}{3}\right\rfloor}-(-1)^{\left\lfloor \frac{m-2k-2}{3}\right\rfloor}\right)
   c_{m,k}
   &=
   \sum_{k= 0}^{\left\lfloor \frac{n-1}{3}\right\rfloor}
   (-1)^{k}
   c_{n-1,\left\lfloor \frac{n-1-3k}{2}\right\rfloor}
   =1, \\
   \sum_{k= 0}^{\left\lfloor \frac{m}{2}\right\rfloor}
   \frac{1}{2}\left((-1)^{\left\lfloor \frac{m-2k}{5}\right\rfloor}-(-1)^{\left\lfloor \frac{m-2k-2}{5}\right\rfloor}\right)
   c_{m+1,k}
   &=
   \sum_{k= 0}^{\left\lfloor \frac{n-1}{5}\right\rfloor}
   (-1)^{k}
   c_{n,\left\lfloor \frac{n-1-5k}{2}\right\rfloor}
   =
   F_{m+1}  
\end{align}
and 
\begin{align}
% \sum_{k=0}^{\left\lfloor \frac{n}{2}\right\rfloor}
%    (-1)^{k}\frac{(n-2k+1)_{k}}{k!}
%    &=
\sum_{k=0}^{\left\lfloor \frac{n}{2}\right\rfloor}
   (-1)^{k}\binom{n-k}{k}   
   &=\begin{cases}
   1 & (n\equiv 0,1\,(\mathrm{mod}\,6)) \\
   -1 & (n\equiv 3,4\,(\mathrm{mod}\,6)) \\
   0 & ({\rm{others}})
   \end{cases}, \\
% \sum_{k=0}^{\left\lfloor \frac{n}{2}\right\rfloor}
%    (-1)^{k}\frac{(n-2k+2)_{k}}{k!}
%    F_{n-2k+1}
%    &=
\sum_{k=0}^{\left\lfloor \frac{n}{2}\right\rfloor}
   (-1)^{k}\binom{n-k+1}{k}
   F_{n-2k+1}
   &=\begin{cases}
   1 & (n\equiv 0,1\,(\mathrm{mod}\,10)) \\
   -1 & (n\equiv 5,6\,(\mathrm{mod}\,10)) \\
   0 & ({\rm{others}})
   \end{cases}. 
\end{align}
\end{cor}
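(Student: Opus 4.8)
The plan is to deduce all four displays by specializing the results already established --- Theorem~\ref{thm:explicit formulas}, Theorem~\ref{thm:inversion of explicit formulas}, Theorem~\ref{thm:the second kind}(2), and the evaluation of $h_n^{(2r)}(-\boldsymbol{\zeta}^{+\iota},-\boldsymbol{\zeta}^{-\iota})$ in Proposition~\ref{thm:prop2}(2) --- to the two smallest cases $r=1$ and $r=2$. The extra inputs are the elementary facts recorded in the Introduction, $F^{(1)}_{n+1}=h_n^{(1)}(-\boldsymbol{\zeta}^{+\iota}-\boldsymbol{\zeta}^{-\iota})=1$ for all $n\ge 0$ and $F^{(2)}_{n}=F_n$; note that $2r+1$ becomes $3$ resp. $5$ and $4r+2$ becomes $6$ resp. $10$, which is the source of the moduli appearing in the statement.

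First I would treat the two identities with right-hand sides $1$ and $F_{m+1}$. These are exactly the two expressions for $F_n^{(r)}$ in Theorem~\ref{thm:explicit formulas}, written out for $r=1$ and $r=2$ and re-indexed by $m=n-1$. For $r=1$ one has $2r+1=3$ and $n+r-2=n-1$, so the first expression becomes $\sum_{k=0}^{\lfloor m/2\rfloor}\tfrac12\bigl((-1)^{\lfloor (m-2k)/3\rfloor}-(-1)^{\lfloor (m-2k-2)/3\rfloor}\bigr)c_{m,k}$ and the second becomes $\sum_{k=0}^{\lfloor (n-1)/3\rfloor}(-1)^k c_{n-1,\lfloor (n-1-3k)/2\rfloor}$; since $F^{(1)}_{\bullet}=1$, both equal $1$. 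For $r=2$ one has $2r+1=5$ and $n+r-2=n$, which gives the sums weighted by $c_{m+1,k}$ and by $c_{n,\lfloor (n-1-5k)/2\rfloor}$, both equal to $F^{(2)}_n=F_{m+1}$. What has to be checked is purely formal: that the summation ranges and the floor-valued subscripts of Theorem~\ref{thm:explicit formulas} collapse verbatim to the ones printed here once $r\in\{1,2\}$ and $m=n-1$.

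Next I would treat the two identities with the case distinctions modulo $6$ and modulo $10$. Putting $\mathbf z=-\boldsymbol{\zeta}^{+\iota}$ in the second-kind expansion of $h_n^{(2r)}$ (Theorem~\ref{thm:the second kind}(2)), equivalently in Theorem~\ref{thm:inversion of explicit formulas}(1), and using $h_{n-2k}^{(r)}(-\boldsymbol{\zeta}^{+\iota}-\boldsymbol{\zeta}^{-\iota})=F^{(r)}_{n-2k+1}$, the quantity $h_n^{(2r)}(-\boldsymbol{\zeta}^{+\iota},-\boldsymbol{\zeta}^{-\iota})$ equals $\sum_{k=0}^{\lfloor n/2\rfloor}(-1)^k\binom{n-k+r-1}{k}F^{(r)}_{n-2k+1}$, which is $\sum_k(-1)^k\binom{n-k}{k}$ for $r=1$ and $\sum_k(-1)^k\binom{n-k+1}{k}F_{n-2k+1}$ for $r=2$. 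On the other hand Proposition~\ref{thm:prop2}(2) evaluates the same quantity as $\tfrac12\bigl((-1)^{\lfloor n/(2r+1)\rfloor}-(-1)^{\lfloor (n-2)/(2r+1)\rfloor}\bigr)$, which is $1$ for $n\equiv 0,1$, $-1$ for $n\equiv 2r+1,2r+2$, and $0$ otherwise, all modulo $4r+2$. Reading this off at $r=1$ and $r=2$ yields the two stated case distinctions.

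I do not expect a real obstacle: the mathematical content is already carried by Theorems~\ref{thm:explicit formulas}, \ref{thm:inversion of explicit formulas} and Proposition~\ref{thm:prop2}(2), and the remaining work is to set $r=1,2$, perform the shift $m=n-1$, and keep the floor functions and summation bounds straight. The only genuinely delicate point is this bookkeeping --- verifying that the specialized indices and ranges reproduce the printed formulas letter for letter --- together with the harmless caveat that the ``$=1$'' display should be read for $n\ge 1$ (its first member does hold for all $m\ge 0$), since $F^{(r)}_{0}=0$.
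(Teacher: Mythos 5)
Your proposal is correct and is essentially the paper's own (implicit) argument: Corollary \ref{thm:new Fibonacci formulas} is precisely the $r=1,2$ specialization of Theorem \ref{thm:explicit formulas} and Theorem \ref{thm:inversion of explicit formulas} combined with Proposition \ref{thm:prop2}(2), using $F^{(1)}_{n+1}=1$ and $F^{(2)}_{n}=F_{n}$, and your index bookkeeping (the shift $m=n-1$, the moduli $3,5$ and $6,10$) checks out. One remark: the factor $(-1)^{k}$ you insert in the last two identities is correct and necessary --- the expansion of $(1+y^{2})^{-m-r}$ contributes $(-1)^{k}\binom{n-k+r-1}{k}$, as one sees already from $h_{2}^{(2)}(z,z^{-1})=z^{2}+1+z^{-2}=(z+z^{-1})^{2}-1$ --- but that sign is absent from the printed statements of Theorem \ref{thm:the second kind}(2) and Theorem \ref{thm:inversion of explicit formulas}, so your specialization silently repairs a sign typo there rather than following those displays verbatim.
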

The formula (\ref{eq:Andrews Fibonacci formula}) was given by Andrews \cite{A}. 
\begin{cor}
\label{thm:new Lucas formulas}
For any non-negative integer $m$, we have
\begin{align}
\label{eq:Lucas cor formula1-1}
\frac{3}{2}\sum_{k=-\left\lfloor \frac{m}{3}\right\rfloor}^{\left\lfloor \frac{m}{3}\right\rfloor}
   \binom{2m}{m-3k}
   &=
% 3\sum_{k=0}^{\left\lfloor \frac{m}{3}\right\rfloor}
%    \binom{2m-1}{m-3k}
%    =
   2^{2m-1}+1, \\
   \label{eq:Lucas cor formula1-2}
\frac{3}{2}\sum_{k=-\left\lfloor \frac{m+2}{3}\right\rfloor}^{\left\lfloor \frac{m-1}{3}\right\rfloor}
   \binom{2m+1}{m-3k-1}
   &=
3\sum_{k=0}^{\left\lfloor \frac{m-1}{3}\right\rfloor}
   \binom{2m+1}{m-3k-1}
   =
   4^{m}-1, \\
L_{2m}
   &=
   -2^{2m-1}
   +\frac{5}{2}\sum_{k=-\left\lfloor \frac{m}{5}\right\rfloor}^{\left\lfloor \frac{m}{5}\right\rfloor}
   \binom{2m}{m-5k}, \\
L_{2m+1}
   &=
   4^{m}
   -\frac{5}{2}\sum_{k=-\left\lfloor \frac{m+3}{5}\right\rfloor}^{\left\lfloor \frac{m-2}{5}\right\rfloor}
   \binom{2m+1}{m-5k-2}.
\end{align}
For any positive integer $n$, we have
\begin{align}
2\sum_{k= 0}^{\lfloor \frac{n+1}{2}\rfloor}\binom{2k-n-1}{k}
   -\sum_{k= 0}^{\lfloor \frac{n}{2}\rfloor}\binom{2k-n}{k}
   &=
   \begin{cases}
   (-1)^{n-1} & (3\not| \,\,\,n) \\
   (-1)^{n}2 & (3\mid n)
   \end{cases}, \\
2\sum_{k= 0}^{\lfloor \frac{n+1}{2}\rfloor}\binom{2k-n-1}{k}
   L_{n-2k}
   -\sum_{k= 0}^{\lfloor \frac{n}{2}\rfloor}\binom{2k-n}{k}L_{n-2k}
   &=
   \begin{cases}
   (-1)^{n-1} & (5\not| \,\,\,n) \\
   (-1)^{n}4 & (5\mid n)
   \end{cases}.
%    (-1)^{n}-\frac{2r+1}{2}\left((-1)^{\left\lfloor \frac{n}{2r+1}\right\rfloor}-(-1)^{\left\lfloor \frac{n-1}{2r+1}\right\rfloor}\right), \\
\end{align}
\end{cor}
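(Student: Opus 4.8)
The plan is to obtain every identity in the statement as a specialization of Theorem~\ref{thm:explicit formulas} and Theorem~\ref{thm:inversion of explicit formulas} to $r=1$ and to $r=2$, using the two facts recalled in the introduction that $L_{n}^{(1)}=1$ for all $n$ and that $L_{n}^{(2)}=L_{n}$ is the ordinary Lucas number. Since $2r+1$ is $3$ when $r=1$ and $5$ when $r=2$, the mod-$3$ identities will come from the $r=1$ cases and the mod-$5$ ones from the $r=2$ cases.

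First I would treat (\ref{eq:Lucas cor formula1-1}) and (\ref{eq:Lucas cor formula1-2}). Putting $r=1$ in the explicit formula for $L_{n}^{(r)}$ of Theorem~\ref{thm:explicit formulas} and using $L_{n}^{(1)}=1$: the even case $n=2m$ reads $1=-2^{2m-1}+\frac{3}{2}\sum_{k=-\lfloor m/3\rfloor}^{\lfloor m/3\rfloor}\binom{2m}{m-3k}$, which rearranges to (\ref{eq:Lucas cor formula1-1}); the odd case $n=2m+1$ reads $1=4^{m}-\frac{3}{2}\sum_{k=-\lfloor (m+2)/3\rfloor}^{\lfloor (m-1)/3\rfloor}\binom{2m+1}{m-3k-1}$, which rearranges to the first equality in (\ref{eq:Lucas cor formula1-2}). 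For the remaining equality in (\ref{eq:Lucas cor formula1-2}) I would fold the symmetric sum in half: the involution $k\mapsto -k-1$ has no integer fixed point, it sends the summand to $\binom{2m+1}{m+3k+2}=\binom{2m+1}{m-3k-1}$ by $\binom{2m+1}{j}=\binom{2m+1}{2m+1-j}$, and it stabilizes the index range because $\lfloor (m+2)/3\rfloor=\lfloor (m-1)/3\rfloor+1$ makes that range symmetric about $-1/2$; hence the full sum equals twice the sum over $k\geq 0$, and multiplying by $\frac{3}{2}$ produces $3\sum_{k=0}^{\lfloor (m-1)/3\rfloor}\binom{2m+1}{m-3k-1}$.

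Next I would put $r=2$ in the same explicit formula. With $L_{n}^{(2)}=L_{n}$, the even case $n=2m$ reproduces the stated formula for $L_{2m}$ over the range $k\in[-\lfloor m/5\rfloor,\lfloor m/5\rfloor]$, and the odd case $n=2m+1$ reproduces the stated formula for $L_{2m+1}$ over the range $k\in[-\lfloor (m+3)/5\rfloor,\lfloor (m-2)/5\rfloor]$; both ranges are just the general ranges $[-\lfloor m/(2r+1)\rfloor,\lfloor m/(2r+1)\rfloor]$ and $[-\lfloor (m+r+1)/(2r+1)\rfloor,\lfloor (m-r)/(2r+1)\rfloor]$ evaluated at $r=2$, and no further work is needed.

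Finally, the last two displayed identities would come from Theorem~\ref{thm:inversion of explicit formulas}. Taking $r=1$, the weights $L_{n-2k}^{(1)}=1$ collapse its left-hand side to $2\sum_{k}\binom{2k-n-1}{k}-\sum_{k}\binom{2k-n}{k}$, while its right-hand side $(-1)^{n}(-1+3\,\delta_{3\mid n})$ equals $(-1)^{n-1}$ when $3\nmid n$ and $(-1)^{n}2$ when $3\mid n$. Taking $r=2$, the weights $L_{n-2k}^{(2)}=L_{n-2k}$ give the weighted Lucas sum on the left, and $(-1)^{n}(-1+5\,\delta_{5\mid n})$ on the right equals $(-1)^{n-1}$ when $5\nmid n$ and $(-1)^{n}4$ when $5\mid n$. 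I do not anticipate any real obstacle here: each step is a direct substitution into results already proved, the sole non-mechanical ingredient being the one-line binomial-symmetry folding used for (\ref{eq:Lucas cor formula1-2}).
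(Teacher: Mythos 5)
Your proposal is correct and is essentially the proof the paper intends: the corollary is obtained by specializing the explicit formula for $L_{n}^{(r)}$ in Theorem~\ref{thm:explicit formulas} and the inversion identity in Theorem~\ref{thm:inversion of explicit formulas} to $r=1$ (where $L_{n}^{(1)}=1$, giving the mod-$3$ identities) and $r=2$ (where $L_{n}^{(2)}=L_{n}$, giving the mod-$5$ identities). Your folding argument via the fixed-point-free involution $k\mapsto -k-1$ for the second equality in (\ref{eq:Lucas cor formula1-2}) is the right way to justify the one step the paper leaves implicit.
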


\appendix

\section{Some congruence relations for $F_{n}^{(r)}$ and $L_{n}^{(r)}$}
All of the results so far have been obtained as specializations of Theorem \ref{thm:the first kind} and Theorem \ref{thm:the second kind}, but in this section, we mention some properties for $F_{n}^{(r)}$ and $L_{n}^{(r)}$ that can be obtained independently of Theorem \ref{thm:the first kind} and Theorem \ref{thm:the second kind}. 
% First we consider some congruence relations for $F_{n}^{(r)}$ and $L_{n}^{(r)}$. 
\begin{thm}
Let $p:=2r+1$ be a prime number. 
If $q$ is a odd prime number such that $q\equiv \pm 1\,\,(\mathrm{mod}\,p)$, then 
\begin{align}
\label{eq:cong F}
F_{n+q-1}^{(r)}&\equiv F_{n}^{(r)}\,\,(\mathrm{mod}\,q), \\
\label{eq:cong L}
L_{n+q-1}^{(r)}&\equiv L_{n}^{(r)}\,\,(\mathrm{mod}\,q).
\end{align}
In particular, for any non-negative integer $k$ we have
\begin{align}
\label{eq:A3}
F_{k(q-1)}&\equiv 0 \quad (\mathrm{mod}\,q), \\
F_{k(q-1)+1}&\equiv F_{k(q-1)+2}\equiv 1 \quad (\mathrm{mod}\,q), \\
L_{k(q-1)}&\equiv \frac{p-1}{2} \quad (\mathrm{mod}\,q), \\
L_{k(q-1)+1}&\equiv 1 \quad (\mathrm{mod}\,q), \\
\label{eq:A7}
L_{k(q-1)+2}&\equiv p-2 \quad (\mathrm{mod}\,q). 
\end{align}
\end{thm}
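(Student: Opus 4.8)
The plan is to express both sequences through the single Galois orbit $\alpha_j:=\zeta^j+\zeta^{-j}=2\cos\!\bigl(\tfrac{2\pi j}{p}\bigr)$ ($j=1,\dots,r$), where $p=2r+1$ and $\zeta$ is a primitive $p$-th root of unity, and then to reduce modulo a prime above $q$ and invoke Fermat's little theorem. Since $h_n^{(r)}$ and $p_n^{(r)}$ are homogeneous of degree $n$ and $-\boldsymbol{\zeta}^{+\iota}-\boldsymbol{\zeta}^{-\iota}$ is a rearrangement of $(-\alpha_1,\dots,-\alpha_r)$, one has $F_{n+1}^{(r)}=(-1)^n h_n^{(r)}(\alpha_1,\dots,\alpha_r)$ and $L_n^{(r)}=(-1)^n p_n^{(r)}(\alpha_1,\dots,\alpha_r)$. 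Because $p$ is prime, $\{\alpha_1,\dots,\alpha_r\}$ is a full Galois orbit over $\mathbb{Q}$, so every $h_n^{(r)}(\alpha)$ and $p_n^{(r)}(\alpha)$ is a rational integer; this is the first thing to record, so that the congruences make sense. As $q$ is odd, $(-1)^{q-1}=1$, the sign factors cancel, and $(\ref{eq:cong F})$, $(\ref{eq:cong L})$ become equivalent to the statement that the integer sequences $m\mapsto h_m^{(r)}(\alpha)$ and $m\mapsto p_m^{(r)}(\alpha)$ are periodic modulo $q$ with period dividing $q-1$.

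To prove this I would fix a prime $\mathfrak{q}$ of $\mathbb{Z}[\zeta]$ above $q$; note $q\ne p$ (since $q\equiv\pm1\pmod p$ with $p,q$ prime), so $\gcd(p,q)=1$ and reduction modulo $\mathfrak{q}$ sends $\zeta$ to a primitive $p$-th root of unity $\bar\zeta\in\overline{\mathbb{F}_q}$, with $\bar\alpha_j=\bar\zeta^j+\bar\zeta^{-j}$. The crux is the claim that $\bar\alpha_1,\dots,\bar\alpha_r$ are pairwise distinct elements of $\mathbb{F}_q^{\times}$. Distinctness: $\bar\alpha_j=\bar\alpha_k$ forces $(\bar\zeta^{\,j}-\bar\zeta^{\,k})(\bar\zeta^{\,j+k}-1)=0$, i.e.\ $j\equiv\pm k\pmod p$, impossible for $1\le j\ne k\le r=\tfrac{p-1}{2}$. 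Membership in $\mathbb{F}_q$: the hypothesis $q\equiv\pm1\pmod p$ gives $\bar\zeta^{\,q}=\bar\zeta^{\pm1}$, hence $\bar\alpha_j^{\,q}=\bar\zeta^{\pm j}+\bar\zeta^{\mp j}=\bar\alpha_j$, so $\bar\alpha_j$ is fixed by the Frobenius. Nonvanishing: $\bar\alpha_j=0$ would give $\bar\zeta^{\,2j}=-1$, impossible since $\bar\zeta^{\,2j}$ has odd order $p>2$ while $-1$ has order $2$. This is the one step where the arithmetic hypothesis on $q$ really enters, and I expect it to be the only genuine obstacle.

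Granting the claim, $\bar\alpha_j^{\,q-1}=1$ for all $j$. Now $p_m^{(r)}(\alpha)=\sum_{j=1}^r\alpha_j^m$, and partial fractions applied to $\prod_{j=1}^r(1-\alpha_j y)^{-1}$ give $h_m^{(r)}(\alpha)=\sum_{j=1}^r A_j\,\alpha_j^{m}$ with $A_j=\prod_{k\ne j}\tfrac{\alpha_j}{\alpha_j-\alpha_k}$ a constant independent of $m$; since the $\bar\alpha_j$ are distinct and nonzero, $A_j$ is $\mathfrak{q}$-integral and both identities survive reduction modulo $\mathfrak{q}$. Replacing $m$ by $m+q-1$ multiplies each reduced $\bar\alpha_j^{\,m}$ by $\bar\alpha_j^{\,q-1}=1$, whence $h_{m+q-1}^{(r)}(\alpha)\equiv h_m^{(r)}(\alpha)$ and $p_{m+q-1}^{(r)}(\alpha)\equiv p_m^{(r)}(\alpha)\pmod{\mathfrak{q}}$; as the entries are ordinary integers, these congruences hold modulo $q$, which is $(\ref{eq:cong F})$ and $(\ref{eq:cong L})$. (Alternatively one may argue that both sequences satisfy the $\mathbb{Z}$-linear recurrence with characteristic polynomial $\prod_j(X-\alpha_j)$, which reduces modulo $q$ to a separable polynomial dividing $X^{q-1}-1$, forcing its companion matrix to be $(q-1)$-periodic.)

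Finally, iterating $(\ref{eq:cong F})$ and $(\ref{eq:cong L})$ in steps of $q-1$ gives $F_{k(q-1)+i}^{(r)}\equiv F_i^{(r)}$ and $L_{k(q-1)+i}^{(r)}\equiv L_i^{(r)}\pmod q$ for all $k\ge0$ and $i=0,1,2$, and then $(\ref{eq:A3})$–$(\ref{eq:A7})$ follow from the initial values $F_0^{(r)}=0$, $F_1^{(r)}=F_2^{(r)}=1$, $L_0^{(r)}=\tfrac{p-1}{2}$, $L_1^{(r)}=1$, $L_2^{(r)}=p-2$. These may be quoted from Theorem \ref{thm:initial values and recurrence formulas}, or obtained directly from $h_0^{(r)}=1$, the vanishing $h_{-1}^{(r)}=0$ of $(\ref{eq:vanishing property})$, and the elementary evaluations $\sum_{j=1}^r\alpha_j=-1$ and $\sum_{j=1}^r\alpha_j^2=p-2$, both of which come down to $\sum_{m=1}^{p-1}\zeta^m=-1$.
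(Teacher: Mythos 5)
Your proof is correct, and the two congruences \eqref{eq:cong F}, \eqref{eq:cong L} reduce to the same engine as in the paper: since $q\equiv\pm1\pmod p$ and $q\neq p$, the Frobenius $x\mapsto x^{q}$ fixes each $\zeta_p^{\,j}+\zeta_p^{-j}$ after reduction modulo (a prime above) $q$. For the Lucas half your argument is essentially identical to the paper's, which computes $(-\zeta_p^{j}-\zeta_p^{-j})^{q}=-\,(\zeta_p^{j}+\zeta_p^{-j})^{q}=-\zeta_p^{j}-\zeta_p^{-j}$ in $\mathbb{F}_q[\zeta_p]$ and sums over $j$. For the Fibonacci half the routes genuinely diverge: the paper writes $h_n=s_{(n)}$ as a bialternant, so that $p^{(p-3)/4}F_{n+q}^{(r)}$ becomes a determinant whose top row alone carries the exponent $n+q-1+r-1$; applying Frobenius to that row and dividing by the discriminant value $\det\bigl((-\zeta_p^{r-i}-\zeta_p^{-(r-i)})^{r-j}\bigr)=p^{(p-3)/4}$ (quoted from Lehmer), a unit mod $q$, gives the shift. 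You instead use the Binet-type expansion $h_m^{(r)}(\alpha)=\sum_j A_j\alpha_j^{m}$ from partial fractions, which forces you to prove the extra lemma that the $\bar\alpha_j$ are distinct, nonzero elements of $\mathbb{F}_q$ so that the $A_j$ are $\mathfrak{q}$-integral and $\bar\alpha_j^{\,q-1}=1$; your factorization $\alpha_j-\alpha_k=\zeta^{-j-k}(\zeta^{j}-\zeta^{k})(\zeta^{j+k}-1)$ does this cleanly. The trade-off is that you avoid importing Lehmer's discriminant evaluation (and the small delicacy of its square root mod $q$) at the cost of a hands-on unit computation; the paper's determinant argument packages that computation into one known constant. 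Your derivation of \eqref{eq:A3}--\eqref{eq:A7} from the initial values $F_0^{(r)}=0$, $F_1^{(r)}=F_2^{(r)}=1$, $L_0^{(r)}=r$, $L_1^{(r)}=1$, $L_2^{(r)}=p-2$ matches the paper's appeal to Corollary \ref{thm:initial conditions}. (As in the paper, the case $r=1$ is degenerate --- there $F_0^{(1)}=1$, so \eqref{eq:A3} implicitly assumes $r\geq 2$; your use of the vanishing property $h_{-1}^{(r)}=0$ makes the same tacit assumption.)
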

\begin{proof}
Let $\mathbb{F}_{q}$ be a finite field order $q$ and $\zeta_{p}$ be a primitive $p$-th root of unity. 
Since the both side of (\ref{eq:cong F}) and (\ref{eq:cong L}) are integers, it is enough to show that the equalities in $\mathbb{F}_{q}[\zeta_{p}]$. 
For any integer $i$ a simple calculation shows that 
\begin{align*}
(\zeta_{p}^{i}+\zeta_{p}^{-i})^{q}
   =
   \zeta_{p}^{qi}+\zeta_{p}^{-qi}
   =
   \zeta_{p}^{i}+\zeta_{p}^{-i} \quad \text{(in $\mathbb{F}_{q}[\zeta_{p}]$)}.
\end{align*}
Hence by the definition of $L_{n}^{r}$ we obtain
\begin{align}
L_{n+q-1}^{(r)}
   &=
   \sum_{j=1}^{r}
      (-\zeta_{p}^{j}-\zeta_{p}^{-j})^{n-1}
      (-\zeta_{p}^{j}-\zeta_{p}^{-j})^{q} \nonumber \\
   &=
   \sum_{j=1}^{r}
      (-\zeta_{p}^{j}-\zeta_{p}^{-j})^{n-1}
      (-\zeta_{p}^{j}-\zeta_{p}^{-j}) \nonumber \\
   &=
   L_{n} \quad \text{(in $\mathbb{F}_{q}[\zeta_{p}]$)}. \nonumber 
\end{align}

To prove (\ref{eq:cong F}), we need the discriminant of $-\zeta_{p}-\zeta_{p}^{-1}$ \cite{L} Theorem 3.8
$$
\det{\left((-\zeta _{p}^{r-i}-\zeta _{p}^{-(r-i)})^{r-j}\right)}_{i,j=1,\ldots r}^{2}
   =
      p^{\frac{p-3}{2}}.
$$
From this evaluation, we have
\begin{equation}
\label{eq:discrim formula}
\det{\left((-\zeta _{p}^{r-i}-\zeta _{p}^{-(r-i)})^{r-j}\right)}_{i,j=1,\ldots r}
   =
   p^{\frac{p-3}{4}}
   =
   p^{\frac{r-1}{2}}.
\end{equation}
We point out even if $r$ is even then $p^{\frac{r-1}{2}} \in \mathbb{F}_{p}$ by the first supplement to quadratic reciprocity. 
Thus we have
\begin{align*}
& p^{\frac{p-3}{4}}F_{n+q}^{(r)} \nonumber \\
   &=
   \det
   \begin{pmatrix}
   (-\zeta _{p}^{r-1}-\zeta _{p}^{-(r-1)})^{n+q-1+r-1} & (-\zeta _{p}^{r-2}-\zeta _{p}^{-(r-2)})^{n+q-1+r-1} & \cdots & (-\zeta _{p}-\zeta _{p}^{-1})^{n+q-1+r-1} \\
   (-\zeta _{p}^{r-1}-\zeta _{p}^{-(r-1)})^{r-2} & (-\zeta _{p}^{r-2}-\zeta _{p}^{-(r-2)})^{r-2} & \cdots & (-\zeta _{p}-\zeta _{p}^{-1})^{r-2} \\
   \vdots & \vdots & \ddots & \vdots \\
   (-\zeta _{p}^{r-1}-\zeta _{p}^{-(r-1)}) & (-\zeta _{p}^{r-2}-\zeta _{p}^{-(r-2)}) & \cdots & (-\zeta _{p}-\zeta _{p}^{-1}) \\
   1 & 1 & \cdots & 1    
   \end{pmatrix} \\
   &=
   \det
   \begin{pmatrix}
   (-\zeta _{p}^{r-1}-\zeta _{p}^{-(r-1)})^{n+r-1} & (-\zeta _{p}^{r-2}-\zeta _{p}^{-(r-2)})^{n+r-1} & \cdots & (-\zeta _{p}-\zeta _{p}^{-1})^{n+r-1} \\
   (-\zeta _{p}^{r-1}-\zeta _{p}^{-(r-1)})^{r-2} & (-\zeta _{p}^{r-2}-\zeta _{p}^{-(r-2)})^{r-2} & \cdots & (-\zeta _{p}-\zeta _{p}^{-1})^{r-2} \\
   \vdots & \vdots & \ddots & \vdots \\
   (-\zeta _{p}^{r-1}-\zeta _{p}^{-(r-1)}) & (-\zeta _{p}^{r-2}-\zeta _{p}^{-(r-2)}) & \cdots & (-\zeta _{p}-\zeta _{p}^{-1}) \\
   1 & 1 & \cdots & 1    
   \end{pmatrix} \\
   &=
   p^{\frac{p-3}{4}}F_{n+1}^{(r)} \quad \text{(in $\mathbb{F}_{q}[\zeta_{p}]$)}.
\end{align*}
Here the first equality follows from (\ref{eq:Schur and complete homogeneous}). 
Finally, since $p$ does not divide $q$, we obtain (\ref{eq:cong F}).

The formulas (\ref{eq:A3}) - (\ref{eq:A7}) follow from (\ref{eq:cong F}), (\ref{eq:cong L}) and Corollary \ref{thm:initial conditions} immediately. 
\end{proof}
% Are there any clear relation between 
% $
% s_{\lambda }(\alpha _{1},\alpha _{1}^{-1},\ldots , \alpha _{r},\alpha _{r}^{-1})
% $
% and 
% $
% s_{\lambda }(\alpha _{1}+\alpha _{1}^{-1},\ldots , \alpha _{r}+\alpha _{r}^{-1})
% $?

\section{Other formulas for $F_{n}^{(r)}$ and $L_{n}^{(r)}$ from symmetric polynomials}
Since the sequences $\{F_{n}^{(r)}\}_{n}$ and $\{L_{n}^{(r)}\}_{n}$ are special values of $h_{n}^{(r)}(\mathbf{z})$ and $p_{n}^{(r)}(\mathbf{z})$ respectively, various formulas for $F_{n}^{(r)}$ and $L_{n}^{(r)}$ are derived immediately from specializations of some formulas for symmetric polynomials \cite{M}. 
In this section, we list some typical formulas obtained from symmetric polynomials.\\
\underline{Generating functions}
\begin{align}
\label{eq:gen fnc F}
   \sum_{n\geq 0}F_{n}^{(r)}u^{n}
   &=
\frac{1}{\sum_{m=0}^{\left\lfloor \frac{r}{2}\right\rfloor }
   (-1)^{m}\binom{r-m}{m}u^{2m}
   -\sum_{m=0}^{\left\lfloor \frac{r-1}{2}\right\rfloor }
   (-1)^{m}\binom{r-1-m}{m}u^{2m+1}}, \\
\label{eq:gen fnc L}
   \sum_{n\geq 0}L_{n+1}^{(r)}u^{n}
   &=
\frac{\sum_{m=0}^{\left\lfloor \frac{r-1}{2}\right\rfloor }
   (-1)^{m}(2m+1)\binom{r-1-m}{m}u^{2m}-\sum_{m=0}^{\left\lfloor \frac{r}{2}\right\rfloor }
   (-1)^{m}2m\binom{r-m}{m}u^{2m-1}}{\sum_{m=0}^{\left\lfloor \frac{r}{2}\right\rfloor }
   (-1)^{m}\binom{r-m}{m}u^{2m}
   -\sum_{m=0}^{\left\lfloor \frac{r-1}{2}\right\rfloor }
   (-1)^{m}\binom{r-1-m}{m}u^{2m+1}}.
\end{align}
Generating functions (\ref{eq:gen fnc F}) and (\ref{eq:gen fnc L}) are obtained by substituting (\ref{eq:3rd result2}) into (2.5) and (2.10) in \cite{M}.\\
\underline{Determinant formulas} For convenience, put 
$$
\alpha_{r,j}
   :=
   -2\cos{\left(\frac{2\pi (r+1-j)}{2r+1}\right)}, \quad (j=1,\ldots,r)
$$
and 
$$
C_{n}^{(r)}
   :=
   e_{n}^{(r)}(-\boldsymbol{\zeta}^{+\iota }-\boldsymbol{\zeta}^{-\iota })
   =
   \begin{cases}
   \binom{n-r-1}{\left\lfloor \frac{n}{2}\right\rfloor}=(-1)^{\left\lfloor \frac{n}{2}\right\rfloor}\binom{r-\left\lfloor \frac{n+1}{2}\right\rfloor}{\left\lfloor \frac{n}{2}\right\rfloor} & (n=0,1,\ldots,r)\\
   0 & (n\not=0,1,\ldots,r)
   \end{cases}.
$$
From (\ref{eq:Schur and complete homogeneous}), (\ref{eq:discrim formula}) and the determinant formulas on p28 of \cite{M}, we obtain the following determinant formulas for $F_{n}^{(r)}$ and $L_{n}^{(r)}$. 
\begin{align}
F_{n+1}^{(r)}
   &=
   \frac{1}{(2r+1)^{\frac{r-1}{2}}}
   \det
   \begin{pmatrix}
   \alpha_{r,1}^{n+r-1} & \alpha_{r,2}^{n+r-1} & \cdots & \alpha_{r,r-1}^{n+r-1} & \alpha_{r,r}^{n+r-1} \\
   \alpha_{r,1}^{r-2} & \alpha_{r,2}^{r-2} & \cdots & \alpha_{r,r-1}^{r-2} & \alpha_{r,r}^{r-2} \\
   \vdots & \vdots & \ddots & \vdots & \vdots \\
   \alpha_{r,1} & \alpha_{r,2} & \cdots & \alpha_{r,r-1} & \alpha_{r,r} \\
   1 & 1 & \cdots & 1 & 1 
   \end{pmatrix} \\
   &=
   \det\left(C_{1-i+j}^{(r)}\right)_{1\leq i,j\leq n} \\
   &=
   \frac{1}{n!}
   \det
   \begin{pmatrix}
   L_{1}^{(r)} & -1 & 0 & \cdots & 0 & 0 \\
   L_{2}^{(r)} & L_{1}^{(r)} & -2 & \cdots & 0 & 0 \\
   L_{3}^{(r)} & L_{2}^{(r)} & L_{1}^{(r)} & \ddots & 0 & 0 \\
   \vdots & \vdots & \vdots & \ddots & \ddots & \vdots \\
%    L_{n-2}^{(r)} & L_{n-3}^{(r)} & L_{n-4}^{(r)} & \cdots & -n+2 & 0 \\
   L_{n-1}^{(r)} & L_{n-2}^{(r)} & L_{n-3}^{(r)} & \cdots & L_{1}^{(r)} & -n+1 \\   
   L_{n}^{(r)} & L_{n-1}^{(r)} & L_{n-2}^{(r)} & \cdots & L_{2}^{(r)} & L_{1}^{(r)} 
   \end{pmatrix}, \\   
L_{n}^{(r)}
   &=
   \det
   \begin{pmatrix}
   C_{1}^{(r)} & 1 & 0 & \cdots & 0 & 0 \\
   2C_{2}^{(r)} & C_{1}^{(r)} & 1 & \cdots & 0 & 0 \\
   3C_{3}^{(r)} & C_{2}^{(r)} & C_{1}^{(r)} & \ddots & 0 & 0 \\
   \vdots & \vdots & \vdots & \ddots & \ddots & \vdots \\
   (n-1)C_{n-1}^{(r)} & C_{n-2}^{(r)} & C_{n-3}^{(r)} & \cdots & C_{1}^{(r)} & 1 \\
   nC_{n}^{(r)} & C_{n-1}^{(r)} & C_{n-2}^{(r)} & \cdots & C_{2}^{(r)} & C_{1}^{(r)} \\ 
   \end{pmatrix} \\
   &=
   (-1)^{n-1}
   \det
   \begin{pmatrix}
   F_{2}^{(r)} & F_{1}^{(r)} & 0 & \cdots & 0 & 0 \\
   2F_{3}^{(r)} & F_{2}^{(r)} & F_{1}^{(r)} & \cdots & 0 & 0 \\
   3F_{4}^{(r)} & F_{3}^{(r)} & F_{2}^{(r)} & \ddots & 0 & 0 \\
   \vdots & \vdots & \vdots & \ddots & \ddots & \vdots \\
   (n-1)F_{n}^{(r)} & F_{n-1}^{(r)} & F_{n-2}^{(r)} & \cdots & F_{2}^{(r)} & F_{1}^{(r)} \\ 
   nF_{n+1}^{(r)} & F_{n}^{(r)} & F_{n-1}^{(r)} & \cdots & F_{3}^{(r)} & F_{2}^{(r)} 
   \end{pmatrix}, \\
C_{n}^{(r)}
   &=
   \det\left(F_{2-i+j}^{(r)}\right)_{1\leq i,j\leq n} \\
   &=
   \frac{1}{n!}
   \det
   \begin{pmatrix}
   L_{1}^{(r)} & 1 & 0 & \cdots & 0 & 0 \\
   L_{2}^{(r)} & L_{1}^{(r)} & 2 & \cdots & 0 & 0 \\
   L_{3}^{(r)} & L_{2}^{(r)} & L_{1}^{(r)} & \ddots & 0 & 0 \\
   \vdots & \vdots & \vdots & \ddots & \ddots & \vdots \\
   L_{n-1}^{(r)} & L_{n-2}^{(r)} & L_{n-3}^{(r)} & \cdots & L_{1}^{(r)} & n-1 \\   
   L_{n}^{(r)} & L_{n-1}^{(r)} & L_{n-2}^{(r)} & \cdots & L_{2}^{(r)} & L_{1}^{(r)} 
   \end{pmatrix}.
%    &=
%    \frac{1}{n!}
%    \det
%    \begin{pmatrix}
%    L_{1}^{(r)} & 1 & 0 & \cdots & 0 \\
%    L_{2}^{(r)} & L_{1}^{(r)} & 2 & \cdots & 0 \\
%    L_{3}^{(r)} & L_{2}^{(r)} & L_{1}^{(r)} & \cdots & 0 \\
%    \vdots & \vdots & \vdots & \ddots & \vdots \\
%    L_{n-1}^{(r)} & L_{n-2}^{(r)} & L_{n-3}^{(r)} & \cdots & n-1 \\   
%    L_{n}^{(r)} & L_{n-1}^{(r)} & L_{n-2}^{(r)} & \cdots & L_{1}^{(r)} \\ 
%    \end{pmatrix}.
\end{align}
\underline{Some relations} 
For any partition $\lambda $ let $z_{\lambda }$ denote the product 
$$
z_{\lambda }:=\prod_{i\geq 1}i^{m_{i}}m_{i}!
$$
where $m_{i}=m_{i}(\lambda )$ is the number of parts of $\lambda $ equal to $i$. 
Then we have
\begin{align}
F_{n+1}^{(r)}
   &=
   \frac{1}{n}\sum_{i=1}^{n}
      L_{i}^{(r)}F_{n+1-i}^{(r)} \\
   &=
   \sum_{|\lambda |=n}
   \frac{L_{\lambda _{1}}^{(r)}\cdots L_{\lambda _{r}}^{(r)}}{z_{\lambda }}, \\
C_{n}^{(r)}
   &=
   \sum_{|\lambda |=n}
   (-1)^{n-r}
   \frac{L_{\lambda _{1}}^{(r)}\cdots L_{\lambda _{r}}^{(r)}}{z_{\lambda }}
\end{align}
where $\lambda =(\lambda _{1},\ldots,\lambda _{r})$ run over partitions 
% their length less than or equal to $r$ 
and $|\lambda |$ denote the sum of the parts
$$
|\lambda |:=\lambda _{1}+\cdots +\lambda _{r}.
$$
These formulas follow from (2.11) and (2.14${}^{\prime}$) in \cite{M}.

%%%%%%%%%%%%%%%

\bibliographystyle{amsplain}

\begin{thebibliography}{ABCD}
\bibitem[A]{A}
{G.\,E.\,Andrews}:
{\em Some formulae for the Fibonacci sequence with generalizations}, 
Fibonacci Quart. {\bf{7}} (1969), 113--130.
\bibitem[AAR]{AAR}
{G.\,E.\,Andrews,\,R.\,Askey and R.\,Roy}:
{\em Special Functions},
@%Encyclopedia of Mathematics and its Applications,\,{\bf{71}}.
Cambridge University Press,\,1999.
\bibitem[L]{L}
{D.\,H.\,Lehmer}:
{\em An extended theory of Lucas' functions}, 
Ann. of Math. {\bf{31}} (1930), 419--448.
\bibitem[M]{M}
{I.\,G.\,Macdonald}:
{\em Symmetric Functions and Hall Polynomials}, 
Oxford University Press,\,1995.
\bibitem[S]{S}
{B.\,E.\,Sagan}:
{\em The Symmetric Group}, 
GTM\,{\bf{203}},\,2003.
\end{thebibliography}

\noindent 
Department of Mathematics, Graduate School of Science, Kobe University, \\
1-1, Rokkodai, Nada-ku, Kobe, 657-8501, JAPAN\\
E-mail: g-shibukawa@math.kobe-u.ac.jp

\end{document}